\newtheorem{theorem}{Theorem}[section]
\newtheorem{lemma}[theorem]{Lemma}
\newtheorem{corollary}[theorem]{Corollary}
\newtheorem{proposition}[theorem]{Proposition}
\newtheorem{definition}[theorem]{Definition}
\newtheorem{example}[theorem]{Example}
\newtheorem{remark}[theorem]{Remark}
\numberwithin{equation}{section}
\def\bZ{{\mathbb Z}}
\def\bE{{\mathbb E}}
\def\bN{{\mathbb N}}
\def\bC{{\mathbb C}}
\def\bR{{\mathbb R}}
\def\Q{{\mathsf{Q}}}        
\def\cD{\mathcal{D}}
\def\cE{\mathcal{E}}
\def\cF{\mathcal{F}}
\def\cO{\mathcal{O}}
\def\cP{\mathcal{P}}
\def\cQ{\mathcal{Q}}
\def\cS{\mathcal{S}}
\def\cZ{\mathcal{Z}}
\def\supp{{\operatorname{supp}}}
\def\R{\mathbb{R}}
\newcommand{\im}{\operatorname{i}}
\begin{document}

\title[Homogeneous Mixed-norm Triebel--Lizorkin spaces]
{Wavelet transforms for Homogeneous\\ Mixed-norm Triebel--Lizorkin spaces}

\author{A.~G.~Georgiadis}
\author{J.~Johnsen}
\author{M.~Nielsen}
\address{Department of Mathematical Sciences, Aalborg University, Fredrik Bajers Vej 7G, DK-9220 Aalborg \O st, Denmark}
\email{nasos@math.aau.dk}
\email{jjohnsen@math.aau.dk}
\email{mnielsen@math.aau.dk}
\subjclass[2010]{42B25, 42B35, 42C40.}
\keywords{wavelet decomposition, $\varphi$-transform, Littlewood--Paley decomposition, Triebel--Lizorkin
  spaces, mixed-norms, Pettis integral.}
\thanks{The authors are supported by the Danish 
Research Council, Natural Sciences grant no.~0602-02505B, the second author also partially by 
grant no.~4181-00042\\[4\jot]
{\tt To appear in Monatshefte f\"ur Mathematik}%
}

\begin{abstract}
  Homogeneous mixed-norm Triebel--Lizorkin spaces are introduced and studied with the use of a
  discrete wavelet transformation, the so-called $\varphi$-transform. This extends the  classical
  $\varphi$-transform approach introduced by Frazier and Jawerth to the setting of mixed-norm
  spaces. Moreover, the theory of the $\varphi$-transform is enhanced through a precise
  definition of the synthesis operator, in terms of a Pettis integral, and a number of rigorous
  results for this operator.
  Especially its terms can always be summed in any order, without changing the resulting distribution.
\end{abstract}

\maketitle

\section{Introduction}\label{Introduction}
\enlargethispage{2\baselineskip}\thispagestyle{empty}      

One of the central problems in harmonic analysis is to estimate the norm of a distribution in a
smoothness space by the norm of a related sequence, through a discrete representation. Perhaps the
most well-known example of this is Parseval's identity connecting the $L_2$-norm of a function with the
$\ell^2$-norm of the sequence of its Fourier coefficients.  

The $\varphi$-transform has been systematically exploited to obtain such discrete
representations since the celebrated papers of Frazier and Jawerth
\cite{MR808825,MR1070037}. Using the $\varphi$-transform, Frazier and Jawerth explored the deeper properties of
the homogeneous Triebel--Lizorkin spaces $\dot{F}^{s}_{pq}$, and this led to a wealth of subsequent
work by many authors. For homogeneous Triebel--Lizorkin spaces on $\mathbb{R}^n$ we may refer the reader to
works of Bownik and Ho and of Kyriazis and
Petrushev~\cite{MR2186983,MR1981430,MR1862566,MR2204289}, and for anisotropic decompositions  
to papers of Borup and Nielsen and  of Bownik and Ho~\cite{MR2296727,MR2423282,MR2186983}. 
Bownik treated non-diagonal dilations and doubling measures \cite{MR2358763}.  
For spaces on other domains such as the sphere, see for example
\cite{MR2943727,MR3271256,MR1981430,MR2548033,MR2439670}. 

In the past decade there has been an interest in analysing regularity of
functions by means of inhomogeneous \emph{mixed norm} Triebel--Lizorkin spaces
$F^s_{\vec{p},q}$. This is a way to measure the degree of smoothness $s$
as well as integrability $\vec{p}=(p_1,\dots,p_n)$ with different
integral exponents in different directions, and a certain microscopic
parameter $q$, in an efficacious environment of harmonic analysis.

For contributions on the embedding properties and traces on hyperplanes, the reader is referred to works of
Johnsen and Sickel~\cite{MR2319603,MR2401510},
who in collaboration with Munch~Hansen also analysed embeddings and equivalent characterisations of such
mixed-norm spaces, their invariance under coordinate transformations and traces on hyperplanes and domains;
cf.\ \cite{MR3073635,MR3178988,MR3377120}. Continuity of pseudo-differential operators 
in this set-up has been treated by Georgiadis and Nielsen~\cite{MR3573690}.

In this paper we take up the construction of wavelet bases for the mixed norm
Triebel--Lizorkin spaces. This has seemingly not been done in the 
context before, perhaps because of certain difficulties in handling the mixed norms. 
Though for the basic mixed-norm Lebesgue spaces $L_{\vec p}(\bR^n)$, 
there is a recent construction for $n=2$ in Section~6 of the work of Torres and Ward~\cite{MR3393695}.
Anyhow, the wavelets seem useful for implementation of the $F^s_{\vec{p},q}$-spaces in most
applied branches of mathematics. 

In our treatment of wavelets, we use the convenient approach of Frazier and Jawerth
by adapting their $\varphi$-transform from \cite{MR808825,MR1070037}, which to some extent bridges
the gap between harmonic analysis and the general multiresolution analysis within wavelet theory. To utilise the $\varphi$-transform
at its best, we introduce the \emph{homogeneous} mixed-norm Triebel--Lizorkin spaces
$\dot{F}^{s}_{\vec{p},q}$ with $\vec{p}=(p_1,\dots,p_n)$ for
$0<p_j<\infty$, $s \in\R$ and $0<q \le \infty$ (and recall in passing that \cite{MR808825}
described a straightforward way to carry over many of the obtained results to corresponding inhomogenenous
spaces, that have a single low-pass filter).
We also develop the basic theory of the spaces $\dot{F}^{s}_{\vec{p},q}$ along the way.
We note that in a recent work Hart, Torres and Wu~\cite{HTW} have applied the wavelet decomposition of 
$\dot F^s_{\vec p,q}$ obtained in the present paper.

Following Frazier and Jawerth, we define corresponding sequence spaces $\dot{f}^{s}_{\vec{p},q}$
and introduce two operators, depending on some admissible functions $\varphi,\psi$:
\begin{align*}
   S_{\varphi}&\colon \dot{F}^{s}_{\vec{p},q}\rightarrow\dot{f}^{s}_{\vec{p},q}\quad(\varphi\text{-transform})
\\[2\jot]
   T_{\psi}&\colon\dot{f}^{s}_{\vec{p},q}\rightarrow\dot{F}^{s}_{\vec{p},q}\quad (\text{``inverse'' $\varphi$-transform})
\end{align*}
Hereby $S_\varphi$ maps a function $f$ to its wavelet coefficients (sends signals to sequences),
whereas $T_\psi$ transforms sequences to functions.

The main result of this article is the following variant of the classical
$\varphi$-transform result, which
states inter alia that every function $f$ in $\dot{F}^{s}_{\vec{p},q}$ will be
reconstructed by using $T_\psi$ on its wavelet coefficients $S_\varphi f$:

\begin{theorem}
The above linear transformations $S_{\varphi}$ and $T_{\psi}$ are bounded
operators, and $T_{\psi}(S_{\varphi}f)=f$ holds for every  $f\in\dot{F}^{s}_{\vec{p},q}$.
\end{theorem}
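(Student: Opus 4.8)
The plan is to follow the three-step scheme of Frazier and Jawerth \cite{MR808825,MR1070037}, transplanting each step to the mixed-norm scale $L_{\vec p}(\ell^q)$ and supplying mixed-norm analogues of the classical auxiliary estimates along the way.

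First I would set up the \emph{discrete Calder\'on reproducing formula}. Starting from the normalisation imposed on the admissible pair $\varphi,\psi$ --- a Littlewood--Paley identity of the type $\sum_{\nu\in\bZ}\overline{\widehat\varphi(2^{-\nu}\xi)}\,\widehat\psi(2^{-\nu}\xi)=1$ for $\xi\neq 0$ --- one derives the continuous formula $f=\sum_{\nu}\psi_\nu*\varphi_\nu*f$ in $\cS'/\cP$ and then its discretisation $f=\sum_{Q}(S_\varphi f)_Q\,\psi_Q$ over the dyadic cubes $Q$, with $(S_\varphi f)_Q=|Q|^{1/2}(\varphi_\nu*f)(x_Q)$. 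Here the Pettis-integral definition of $T_\psi$ is exactly what makes the right-hand side a well-defined element of $\cS'/\cP$, independent of the order of summation --- the enhancement announced in the abstract --- so that, once the convergence supplied by the next two steps is in hand, the identity $T_\psi(S_\varphi f)=f$ drops out of this formula.

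Next I would prove \emph{boundedness of $S_\varphi$}. I would dominate each sample $(\varphi_\nu*f)(x_Q)$ by the Peetre maximal function $\varphi^\ast_{\nu,a}f(x)=\sup_{y\in\bR^n}|(\varphi_\nu*f)(x-y)|\,(1+2^\nu|y|)^{-a}$, so that a routine sampling argument over a single dyadic generation gives $\norm{S_\varphi f}_{\dot f^s_{\vec p,q}}\lesssim\bigl\|(2^{\nu s}\varphi^\ast_{\nu,a}f)_\nu\bigr\|_{L_{\vec p}(\ell^q)}$. The substantive task is to bound the right-hand side by $\bigl\|(2^{\nu s}\varphi_\nu*f)_\nu\bigr\|_{L_{\vec p}(\ell^q)}=\norm{f}_{\dot F^s_{\vec p,q}}$: this rests on the sub-mean-value property of the band-limited functions $\varphi_\nu*f$ combined with a vector-valued maximal inequality on $L_{\vec p}(\ell^q)$. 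To reach the full range $0<p_j<\infty$, $0<q\le\infty$ I would first take the Peetre exponent $a$ large relative to $1/\min_j p_j$ and $1/q$, then reduce to exponents strictly above $1$, where the mixed-norm Fefferman--Stein inequality follows by iterating the scalar one coordinate by coordinate. I expect this to be the main obstacle: the mixed-norm maximal inequality and the attendant bookkeeping of $a$ have no counterpart in the scalar argument and need the auxiliary estimates established earlier.

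Finally I would establish \emph{boundedness of $T_\psi$}. Given $t=(t_Q)\in\dot f^s_{\vec p,q}$, I would show that $T_\psi t=\sum_Q t_Q\,\psi_Q$ converges in $\cS'/\cP$ and satisfies $\norm{T_\psi t}_{\dot F^s_{\vec p,q}}\lesssim\norm{t}_{\dot f^s_{\vec p,q}}$. Applying $\varphi_\mu*(\cdot)$ term by term, the kernel $\varphi_\mu*\psi_Q$ decays rapidly both in the scale difference $|\mu-\nu|$ and in the distance between $Q$ and the evaluation point, so $|\varphi_\mu*T_\psi t|$ is dominated by $\sum_Q a_{\mu Q}|t_Q|$ for an almost-diagonal matrix $(a_{\mu Q})$. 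A Schur-type estimate together with the mixed-norm maximal inequality transfers the bound to the sequence side; the same estimates control the tails and, with the Pettis-integral framework, yield convergence in $\cS'/\cP$. Combining the three steps proves the theorem, the only genuinely new ingredient beyond \cite{MR808825} being the mixed-norm maximal inequality invoked in the last two steps.
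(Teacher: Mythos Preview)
Your three-step outline is correct and matches the paper's approach closely: the identity $T_\psi S_\varphi=I$ via the discrete Calder\'on formula (the paper's Proposition~\ref{l:identity}), boundedness of $S_\varphi$ via the Peetre maximal inequality combined with the mixed-norm Fefferman--Stein inequality (which the paper simply quotes from Bagby~\cite{MR0370171}, so this is not the obstacle you anticipate), and boundedness of $T_\psi$ via kernel decay plus the same maximal machinery.

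Two implementation details in the paper differ from your sketch and are worth noting. First, since the paper's pairing is sesquilinear one has $(S_\varphi f)_Q=\langle f,\varphi_Q\rangle=|Q|^{1/2}(\tilde\varphi_\nu*f)(x_Q)$ with a reflection, so the direct Peetre argument only yields $\|S_\varphi f\|_{\dot f^s_{\vec p,q}}\lesssim\|f\|_{\dot F^s_{\vec p,q}(\tilde\varphi)}$; the paper then runs a separate norm-equivalence step (Proposition~\ref{Fnorm-prop}), bootstrapping via $T_\Psi S_\Phi=I$ for auxiliary admissible pairs, to remove the tilde. Second, for extending the $T_\psi$ bound from finitely supported sequences to all of $\dot f^s_{\vec p,q}$ the paper does \emph{not} use a tail or Schur-type estimate directly: instead it (i) verifies $\dot f^s_{\vec p,q}\subset D(T_\psi)$ by crude pointwise bounds on $|a_Q|$, (ii) invokes the w$^*$-approximation property of the Pettis-integral $T_\psi$, and (iii) passes to the limit in the norm inequality via a Fatou property of $\|\cdot\|_{\dot F^s_{\vec p,q}}$ under w$^*$-convergence in $\cZ'$ (Lemma~\ref{Fatou-lem}). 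Your route would also work, but the Fatou-property argument sidesteps any density issue and hence covers $q=\infty$ uniformly.
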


A convenient consequence of the induced formula $T_{\psi}\circ S_{\varphi}=I$
is that completeness of $\dot{F}^{s}_{\vec{p},q}$ follows at once from that of the simpler sequence
space $\dot{f}^{s}_{\vec{p},q}$.

Our treatment departs from standard formulas for $S_\varphi$ and $T_\psi$, namely
\begin{align} \label{Sintro}
  (S_\varphi f)_Q&=\langle f,\varphi_Q\rangle \vphantom{\sum}\hphantom{\psi_Q(x)}\qquad\text{for each dyadic cube $Q$},
\\  \label{Tintro}
     T_\psi a(x)&=\sum_{Q\in\cQ}a_Q\psi_Q(x)\qquad\text{for each sequence $a=\{a_Q\}_{Q\in\cQ}$}.
\end{align}
However, while $S_\varphi f$ is well defined for $f\in\cS'/\cP$, 
the operator $T_\psi$ is a more delicate object since the sum in \eqref{Tintro}, and hence $T_\psi$
itself, only makes sense a priori on sequences $\{a_Q\}$ of finite support.

The synthesis operator  $T_\psi$ has been further studied in the homogeneous set-up in by e.g.\
Kyriazis~\cite{MR1981430}, later by Bownik and Ho~\cite{MR2186983} and Bownik~\cite{MR2358763}, who
partially resolved the question of interpretation of \eqref{Tintro}.
Here we would like to present a new perspective on $T_\psi$ and put
the study of it in a rigorous framework. 

So let us recall that previously boundedness of $T_\psi$ on sequences of finite support
has been followed up by extension by continuity---that for $q=\infty$ is inadequate due to a lack
of density. And often this extension $\widetilde T_\psi$ has entered composition formulas, like
$T_\psi\circ S_\varphi=I$, in a heuristic way with a tacit assumption that also $\widetilde T_\psi$ acts as in
\eqref{Tintro}---although the notation $\sum_{Q\in\cQ}$ was never explicitly assigned any specific meaning.
Indeed, the set of dyadic cubes, $\cQ$, can be numbered in many ways, so some condition of \emph{integrability} must
be imposed on $\{a_Q\}$ to get a consistent theory. 
To our knowledge, neither the foundational papers by Frazier and Jawerth~\cite{MR808825,MR1070037} nor the subsequent
literature have explicitly addressed this integrability.

On these grounds it seems well motivated that we here revise the foundation of the synthesis
operator $T_\psi$ by suggesting a concise definition. Specifically we define 
\begin{equation} \label{Pettis-intro}
  T_\psi a=\int_{\cQ} a_Q\psi_Q\,d\tau_{1+n}  
\end{equation} 
in terms of a Pettis integral (or weak Bochner integral) over the set of dyadic cubes $\cQ\simeq \bZ\times\bZ^n$
with respect to the counting measure $\tau_{1+n}$. This integral is the element  in $\cZ'=\cS'/\cP$
that fulfils
  \begin{equation} \label{Pettis-intro'}
    \langle T_\psi a,\phi\rangle =\int_{\cQ} \langle a_Q\psi_Q,\phi\rangle\,d\tau_{1+n} \qquad\text{for all $\phi\in Z$}.
  \end{equation}
From this explicit definition we
\begin{itemize} \addtolength{\itemsep}{\jot}
\item obtain that $R(S_\varphi)\subset D(T_\psi)$, that is, the above $T_\psi$ is defined on the
  entire range of the wavelet transform $S_\varphi$;
\item show explicitly for our sequence spaces that
  $\dot f^s_{\vec p,q}\subset D(T_\psi)$;
\item rigorously prove that $T_\psi(S_\varphi f)=f$ for every $f\in\cS'/\cP$, when
  $\varphi$, $\psi$ are admissible test functions  satisfying the well-known \emph{reconstruction} identity,
  \begin{equation}  \label{LP-condition}
    \sum_{\nu=-\infty}^\infty \overline{\hat\varphi(2^{-\nu}\xi)}\hat\psi(2^{-\nu}\xi)=1 
    \quad\text{ for $\xi\ne0$};    
  \end{equation}
\item deduce from \eqref{LP-condition} that the following properties are equivalent:
\begin{itemize}
\item[$\diamond$] $P=S_\varphi\circ T_\psi$ is a projection for which
  $P=I$, i.e.\ $S_\varphi(T_\psi a)=a$ for every $a\in D(T_\psi)$,
\item[$\diamond$] $\varphi$, $\psi$ fulfil  the \emph{biorthogonality} condition 
  \begin{equation}  \label{biortho-intro}
    \int_{\bR^n}\psi_Q\overline{\varphi_J}\,dx =\delta_{Q,J} \qquad\text{(Kronecker delta)};
  \end{equation}
\item[$\diamond$] there exists a numbering of the dyadic cubes such that $\{\psi_{Q_j}\}_{j\in\bN}$ is a basis for
  $\cS'/\cP$,
  \item[$\diamond$]  \emph{every} numbering of the $\{\psi_Q\}_{Q\in\cQ}$
  induces an \emph{unconditional} basis for $\cS'/\cP$.
\end{itemize}
\end{itemize}
The last point has a striking consequence for the entire $\varphi$-transform set-up. Indeed, one
could hope to fix a numbering 
such that $f=\sum_{j=1}^\infty c_j\psi_{Q_j}$ would hold for any $f\in\cS'/\cP$ for unique scalars $c_j$;
then stick to this numbering, and term $\{c_j\}$ the wavelet coefficients of $f$ etc. 
But, in this case, $\{\psi_{Q_j}\}_{j\in\bN}$ would be a basis for $\cS'/\cP$, and
for this it is \emph{necessary} that $\varphi$, $\psi$ fulfil the biorthogonality condition
\eqref{biortho-intro}; cf.\ the above or Theorem~\ref{basis-thm} below. 
And in case $\varphi$, $\psi$ do fulfil \eqref{biortho-intro}, this basis would be unconditional by
the last part above, so the summation order would be inconsequential. 

But when $\varphi$, $\psi $ violate the biorthogonality \eqref{biortho-intro},
then no numbering yields a basis of $\cS'/\cP$ and one has $P=S_\varphi T_\psi\ne I$, so it is clear that 
$T_\psi$ is not injective (as $S_\varphi$ is). Hence in this case $f$ will always have other possible wavelet coefficients than
the canonical ones in $S_\varphi f$. 

However, even without biorthogonality all summation orders in $T_\psi a$ yield
the same result---this seems to be a new observation, cf.\  the below Corollary~\ref{Tpsi-cor}. 
It follows from the w$^*$-approximation property of $T_\psi$ 
in Theorem~\ref{Tpsi-thm} as a  \emph{post festum} result of the definition in
\eqref{Pettis-intro}--\eqref{Pettis-intro'}.
Adding this conclusion to the list we get:
\begin{itemize}
  \item   For $T_\psi$ in \eqref{Pettis-intro} the value on each sequence $a$ in
  $D(T_\psi)$ is obtained as $T_\psi a=\sum_{j=1}^\infty a_{Q_j}\psi_{Q_j}$ by summing the terms
  as an infinite series in $\cZ'=\cS'/\cP$ in any order.
\end{itemize}
Thus \hskip-.15pt the \hskip-.15pt dichotomy \hskip-.15pt given \hskip-.15pt by \hskip-.15pt
{\small(}non-\hskip-.4pt{\small)}biorthogonality 
\hskip-.15pt is \hskip-.15pt circumvented \hskip-.15pt by \hskip-.15pt using \hskip-.15pt the \hskip-.15pt 
Pettis \hskip-.15pt integral \hskip-.15pt
\eqref{Pettis-intro} \hskip-.15pt for $T_{\hskip-.2pt \psi}$. 

Furthermore, it seems to be an open question whether biorthogonal wavelet systems exist within
the $\varphi$-transform framework; cf.\ Remark~\ref{Meyer-rem} below. 
Although we have strived for minimal requirements on all constants,
extending admissibility of $\varphi$, $\psi$ from that in \cite{MR808825,MR1070037}, 
it is not possible to obtain orthonormal wavelets for $\varphi=\psi$ in our set-up; cf.\ the classical consideration in
Remark~\ref{orto-rem} below. Thus Meyer's orthonormal wavelets, which are briefly recalled in
Example~\ref{Meyer}, fall outside the (present) theory of the $\varphi$-transform.

\bigskip
Alongside this rigorous definition of $T_\psi$, we have also worked out a precise version
of  Peetre's homogeneous Littlewood--Paley decomposition; cf.\ Appendix~\ref{LP'-app} below. 
As this corrects the previous literature in two ways, and enters our proof of the formula $T_\psi\circ
S_\varphi=I$, we review it here:

When $\hat\phi\in\cS$ is supported in an annulus $0<C_0\le|\xi|\le C^0$ and 
$1=\sum_{\nu\in\bZ} \hat\phi(2^{-\nu}\xi)$ for $\xi\ne0$, then every $f\in\cS'$ has a
homogeneous Littlewood--Paley decomposition with an \emph{explicit} asymptotic behaviour for
$\nu\to-\infty$. 

Namely, by working with polynomial corrections $P_{m,N}$ with $N\in\bN$ and a fixed
degree $m\ge-1$, one has
\begin{equation}
  \label{eq:LPintro}
  f(x)=\sum_{\nu=-N}^\infty \phi(2^{-\nu}\cdot)*f(x) + P_{m,N}(x) + R_m(x).
\end{equation}
Here the remainder
term fulfils $R_m=\cO(2^{-N(n+m+1-d)})$ in $\cS'$-seminorm, $d$ being the $\cS'$-order of $f$. So
for degrees $m\ge d-n$, clearly $R_m\to0$ exponentially fast for $N\to\infty$, whence $f$ is 
well represented in $\cS'$ for $N\to\infty$ by the band-limited series $\sum_{\nu\ge-N} \phi(2^{-\nu}\cdot)*f$
corrected by $P_{m,N}$.

As a novelty the $P_{m,N}$ are \emph{uniquely} determined (asymptotically for
$N\to\infty$) as the degree $m$ Taylor polynomials at $x=0$ of a convolution
$2^{-nN}\Phi(2^{-N}\cdot)*f$; cf.\ \eqref{fPhi-id}. The correcting $P_{m,N}$ can moreover be
omitted ($m=-1$) for distributions having $d<n$, in view
of the estimate of $R_m$.

Previously the literature has indicated, through several contributions, that in general one would
meet arbitrary polynomials $P$ and $P_N$ on the left- and right-hand sides of \eqref{eq:LPintro}, whilst a few authors have
claimed some restrictions for the degrees of $P$, $P_N$; cf.\ Remark~\ref{Peetre-rem} below.

But this picture is misleading in two ways:
our analysis shows that such $P$, $P_N$ must be interrelated, as $P-P_N$ asymptotically equals the Taylor polynomial
$P_{m,N}$ due to the uniqueness---and on the contrary the degree $m\ge-1$ can be arbitrary. 
Our remainder estimate, which seems to be new in itself, shows that even by
omitting polynomials, $f$ will have a specific asymptotic representation by $\sum_{\nu\in\bZ}
\phi(2^{-\nu}\cdot)*f$, which e.g.\ is exact for $d<n$ and, because of \eqref{eq:LPintro},
has an error for $d\ge n$ with the leading term given by $P_{d-n,N}$.

This improved insight results at once from a general analysis of the ``wrong'' limit $t\to0^+$ of
convolutions of the form
\begin{equation}
  t^n\Phi(t\cdot)* f(x),\qquad \Phi\in\cS, f\in \cS'.
\end{equation}
For details on how such convolutions behave asymptotically in $\cS'$ as their Taylor
polynomial $P_m$ of degree $m$ at $x=0$, the reader is referred to our analysis in
Proposition~\ref{asymp-prop}.

For the wavelet reconstruction formula $f=T_\psi(S_\varphi f)$ the consequences of the above are
immediate, because the right-hand side $T_\psi(S_\varphi f)$ identifies with the series in \eqref{eq:LPintro} for a
special choice of $\phi$. Thus one always has that $f=T_\psi(S_\varphi f)$ in the quotient space $\cS'\setminus\cP$
(if \eqref{LP-condition} holds), but it even holds in $\cS'$ itself for all  $f$ having $d<n$. In
general the above shows which polynomials to add and how fast the wavelet reconstruction converges.

\subsection*{Contents} Preliminaries and notation are summed up in
Section~\ref{Preliminaries}. General results for the synthesis
operator $T_\psi$ defined by the Pettis integral are developed in
Section~\ref{transform-sect}.
Triebel--Lizorkin spaces with mixed norms are introduced and studied in
Section~\ref{MTL-sect} together with the corresponding sequence spaces.
Section~\ref{TSI-sect} is devoted to our results on $S_\varphi$
and $T_\psi$ in the scales of mixed-norm Triebel--Lizorkin spaces.
Some technical proofs are given in Appendix~\ref{p*p-app}--\ref{LP'-app}.

\section{Preliminaries}\label{Preliminaries}
\subsection{Notions and notation} 
Generally we follow the notation 
of H{\"o}rmander~\cite{MR1996773} for the Fourier transformation 
$\hat f(\xi)=\cF f(\xi)=\int e^{-\operatorname{i} x\cdot\xi}f(x)\,dx$ and the distribution spaces, so $\cS'=\cS'(\bR^n)$ and
$\cD'=\cD'(\bR^n)$ are dual to the spaces of Schwartz functions $\cS=\cS(\bR^n)$ and smooth
functions of compact support $C_0^\infty(\bR^n)$, respectively. In particular
$D^\alpha=(-\operatorname{i})^{|\alpha|}\partial^\alpha$ for each multiindex $\alpha$. However,
we use for convenience bracket notation and take functionals to be anti-linear (unless it is stated
otherwise), so if e.g.\ $f$ is locally integrable,
$\langle f,\varphi\rangle=\int f(x)\overline{\varphi(x)}\,dx$ for $\varphi\in C_0^\infty$.

If $\vec{p}=(p_1,\dots,p_n)$ with $0<p_1,\dots,p_n<\infty$ a function $f\colon\mathbb{R}^n\rightarrow \mathbb{C}$
belongs to $L_{\vec{p}}=L_{\vec{p}}(\mathbb{R}^n)$ if
\begin{equation}  \label{Lp}
  \|f\|_{\vec{p}}:=
  \Big(\int_\mathbb{R}\cdots\Big(\int_\mathbb{R}\Big(\int_\mathbb{R} 
  |f(x_1,\dots,x_n)|^{p_1} dx_1\Big)^{\frac{p_2}{p_1}} dx_2\Big)^{\frac{p_3}{p_2}}\cdots dx_n\Big)^{\frac{1}{p_n}}<\infty.
\end{equation}
Here $\|\cdot\|_{\vec{p}}$ is a quasi-norm, but
$(L_{\vec{p}},\|\cdot\|_{\vec{p}})$ is a Banach space if $\min(p_1,\dots,p_n)\geq 1$.
Of course, in general a change of integration order will lead to another iterated integral having another value.

Throughout we use the involution $\tilde\phi(x)=\overline{\phi(-x)}$, for which $\cF \tilde\varphi=\overline{\cF\varphi}$.
When a vector space $X$ has two equivalent quasi-norms $\|\cdot\|$ and $|\!|\!|\cdot|\!|\!|$, i.e.\
some numbers $c$, $C$ fulfil  $c\|x\|\le |\!|\!|x |\!|\!| \le C\|x\|$ for all $x\in X$, we
indicate this by writing $\|\cdot\|\approx |\!|\!|\cdot|\!|\!|$.

A topological vector space $E$ is said to have a sequence $\{x_n\}$ as basis when
each $x\in E$ can be written $x=\sum_{n=1}^\infty \lambda_n x_n$, with convergence in $E$, for a
unique sequence of scalars $\lambda_n$. Moreover, $\{x_n\}$ is called a Schauder basis if the linear
forms $x\mapsto \lambda_n(x)$ are continuous (this extension beyond the category of Banach spaces goes back at least to
Arsove and Edwards~\cite{MR0115068}). 
It is an unconditional basis of $E$ when, moreover,
$x=\sum_{n=1}^\infty \lambda_{p(n)} x_{p(n)}$ for any bijection $p\colon\bN\to\bN$.

Unimportant positive constants are denoted by $c$, although the value may depend on the place of
occurrence. As usual $t_+=\max(t,0)$, and $\mathds{1}_S$ stands for the characteristic function of
the set $S$. 

\subsection{The wavelet set-up}
Our basic building block is a function $\varphi\in\cS$ satisfying
\begin{equation}\label{phi2}
\supp\hat{\varphi}\subset\{\, \xi\in\mathbb{R}^n\mid K_0\le |\xi|\le K^0\,\},
\end{equation}
\begin{equation}\label{phi3}
|\hat{\varphi}(\xi)|\geq c>0 \quad \text{ for $K_1\le |\xi|\le K^1$}
\end{equation}
for some fixed constants $K_0<K_1<1<K^1<K^0$ (where superscripts refer to the upper bounds in
\eqref{phi2}--\eqref{phi3}) chosen so that
\begin{equation}
  \label{phi01}
  2K_1<K^1,\qquad K^0<\pi.
\end{equation}
The choice $K_0=1/2$, $K^0=2$ and $K_1=3/5$, $K^1=5/3$ was used in \cite{MR1070037}, but 
we extend the framework as described.

\begin{definition}\label{adm}
Functions $\varphi\in\cS$ satisfying \eqref{phi2}--\eqref{phi01} will be called admissible.
\end{definition}
Admissible functions obviously exist, but they are needed in pairs $(\varphi$, $\psi)$ that fulfil
the reconstruction identity in the following classical lemma:

\begin{lemma} \label{phi4-lem}
  To each admissible $\varphi$ there exist a function $\psi\in\cS$  which is admissible (for the
  same constants) and satisfies
  \begin{equation}  \label{phi4}
    \sum_{\nu\in \mathbb{Z}} \overline{\hat{\varphi}(2^{-\nu}\xi)}\hat{\psi}(2^{-\nu}\xi)=1 \qquad
    \text{for} \quad \xi\neq 0. 
  \end{equation}
\end{lemma}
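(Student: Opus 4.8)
The plan is to construct $\psi$ by an explicit normalisation of $\varphi$ on the Fourier side. Set
\[
  g(\xi):=\sum_{\mu\in\bZ}|\hat\varphi(2^{-\mu}\xi)|^2\quad(\xi\neq0),
\]
and define $\hat\psi(\xi):=\hat\varphi(\xi)/g(\xi)$ for $\xi\neq0$, with $\hat\psi\equiv0$ near the origin. The proof then splits into four verifications: (i) $g$ is well defined, smooth on $\bR^n\setminus\{0\}$, and satisfies $0<c'\le g(\xi)\le C'<\infty$ there; (ii) $\hat\psi\in C_0^\infty(\bR^n)\subset\cS$ with $\supp\hat\psi\subset\supp\hat\varphi$; (iii) $\psi$ is admissible for the same constants $K_0<K_1<1<K^1<K^0$; (iv) the pair $(\varphi,\psi)$ satisfies the reconstruction identity \eqref{phi4}.

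For (i): by \eqref{phi2} the summand $\hat\varphi(2^{-\mu}\xi)$ vanishes unless $K_0\le 2^{-\mu}|\xi|\le K^0$, so for each fixed $\xi\neq0$ at most $N:=\lfloor\log_2(K^0/K_0)\rfloor+1$ consecutive values of $\mu$ contribute; hence the series is finite, $g(\xi)\le N\|\hat\varphi\|_\infty^2=:C'$, and since the contributing index set is locally constant in $\xi$, $g\in C^\infty(\bR^n\setminus\{0\})$. The lower bound is the one point that genuinely uses the hypotheses: the normalisation $2K_1<K^1$ in \eqref{phi01} forces the dilated annuli $\{\,\xi:2^\mu K_1\le|\xi|\le 2^\mu K^1\,\}$, $\mu\in\bZ$, to cover $\bR^n\setminus\{0\}$, so every $\xi\neq0$ lies in one of them; by \eqref{phi3} the corresponding term of $g(\xi)$ is $\ge c^2$, giving $g(\xi)\ge c^2=:c'>0$.

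For (ii): on $\bR^n\setminus\{0\}$ the function $1/g$ is smooth by (i), while $\hat\varphi\in\cS$ vanishes in a neighbourhood of the origin by \eqref{phi2}; hence $\hat\psi=\hat\varphi/g$ (with $\hat\psi\equiv0$ near $0$) is smooth on all of $\bR^n$, compactly supported, with $\supp\hat\psi\subset\supp\hat\varphi\subset\{K_0\le|\xi|\le K^0\}$, so $\hat\psi\in C_0^\infty$ and $\psi\in\cS$. This already gives \eqref{phi2} for $\psi$. For (iii) it remains to check \eqref{phi3}: if $K_1\le|\xi|\le K^1$ then $|\hat\psi(\xi)|=|\hat\varphi(\xi)|/g(\xi)\ge c/C'>0$; the conditions in \eqref{phi01} involve only the (unchanged) constants $K_0,K_1,K^1,K^0$ and so are inherited.

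Finally (iv) is a short computation. Reindexing $\mu\mapsto\mu+\nu$ shows $g(2^{-\nu}\xi)=g(\xi)$ for every $\nu\in\bZ$, so for $\xi\neq0$
\[
  \sum_{\nu\in\bZ}\overline{\hat\varphi(2^{-\nu}\xi)}\,\hat\psi(2^{-\nu}\xi)
  =\sum_{\nu\in\bZ}\frac{|\hat\varphi(2^{-\nu}\xi)|^2}{g(2^{-\nu}\xi)}
  =\frac{1}{g(\xi)}\sum_{\nu\in\bZ}|\hat\varphi(2^{-\nu}\xi)|^2=1,
\]
where the rearrangement is harmless because only finitely many $\nu$ contribute. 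I do not expect any real obstacle here; the only slightly delicate step is the uniform lower bound for $g$ in (i), which is precisely what the geometric normalisation \eqref{phi01} is designed to supply.
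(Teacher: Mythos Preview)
Your proof is correct but follows a genuinely different route from the paper. You use the classical Calder\'on--Frazier--Jawerth normalisation: set $g(\xi)=\sum_\mu|\hat\varphi(2^{-\mu}\xi)|^2$ and take $\hat\psi=\hat\varphi/g$, exploiting the dyadic invariance $g(2^{-\nu}\xi)=g(\xi)$ to collapse the reconstruction sum to $g/g=1$. The paper instead introduces an auxiliary radial cut-off $h\in C_0^\infty$ with $h\equiv1$ near the origin and defines $\hat\psi(\xi)=(h(\xi)-h(2\xi))/\overline{\hat\varphi(\xi)}$, so that \eqref{phi4} becomes the telescoping identity $\sum_\nu\big(h(2^{-\nu}\xi)-h(2^{-\nu+1}\xi)\big)=1$.

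Your approach is the more direct one: no auxiliary function is needed, $\supp\hat\psi\subset\supp\hat\varphi$ is automatic, and the dilation invariance of $g$ makes the identity immediate. The paper's telescoping construction, on the other hand, decouples $\psi$ from $\varphi$ somewhat (the shape of $\hat\psi$ is governed by $h$, not by $|\hat\varphi|^2$), which can give extra flexibility; it also only requires $|\hat\varphi|$ to be bounded below on a slightly enlarged annulus $[\tilde K_1,\tilde K^1]$ rather than summing a full Calder\'on series. One small wording point: the contributing index set for $g$ is not literally \emph{locally constant} in $\xi$ (indices enter and leave as $|\xi|$ crosses $2^\mu K_0$ or $2^\mu K^0$), but your conclusion stands because the sum is locally finite and each term is smooth, so $g\in C^\infty(\bR^n\setminus\{0\})$ regardless.
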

We recall that $\hat\psi(\xi)=(h(\xi)-h(2\xi))/\overline{\hat\varphi(\xi)}$
reduces the claim in Lemma~\ref{phi4-lem} to a telescopic sum, if the auxiliary function $h\in
C_0^\infty(\bR^n)$ is chosen thus: when $|\hat\varphi(\xi)|>c/2$ for $|\xi|\in[\tilde K_1,\tilde
K^1]\subset \,]K_0,K^0[$ with $\tilde K_1<K_1$ and $K^1<\tilde K^1$, 
then $h(\xi)=0$ should hold for $|\xi|\ge \tilde K^1$ and $h(\xi)=1$ for 
$|\xi|\le 2\tilde K_1$ (where $2\tilde K_1<\tilde K^1$ holds by \eqref{phi01}), 
whilst $0<h<1$ elsewhere, for then $h(\xi)-h(2\xi)>0$ if and only if 
$|\xi|\in \,]\tilde K_1,\tilde K^1[\,$, so that \eqref{phi2} and \eqref{phi3} hold for $\hat\psi$.

We generally set $\varphi_{\nu}(x)=2^{\nu n}\varphi(2^{\nu}x)$ for $\nu\in\mathbb{Z}$, and any $\varphi\in\cS$.
For $\nu\in\mathbb{Z}$ and $k\in\mathbb{Z}^n$, we denote by $Q_{\nu k}$ the dyadic cube
\begin{equation*}
  Q_{\nu k}=\{(x_1,\dots,x_n)\in\mathbb{R}^n\mid k_i\leq 2^{\nu}x_i<k_i+1,i=1,\dots,n\}
\end{equation*}
and let $\mathcal{Q}$ be the set of these dyadic cubes; $Q$ will designate an arbitrary cube in $\mathcal{Q}$. Moreover,
$x_Q=2^{-\nu}k$ stands for the "lower left-corner" of $Q$. By $\ell(Q)=2^{-\nu}$ we indicate the side
length of $Q$, and by $|Q|=2^{-\nu n}$ its Lebesgue measure. 

We recall that a frame of wavelets consists of an admissible function
$\varphi$ subjected to translation and dilation, associated with an arbitrary dyadic cube $Q=Q_{\nu k}$,
\begin{equation}\label{phi7}
  \varphi_Q(x):=2^{n\nu/2} \varphi(2^{\nu}x-k)=|Q|^{1/2}\varphi_{\nu}(x-x_Q).
\end{equation}
For all dyadic $Q$ of length $\ell(Q)=2^{-\nu}$
\begin{equation}\label{phi5}
  \supp\hat\varphi_Q\subset \{\,\xi\mid K_0 2^{\nu}\le|\xi|\le K^02^{\nu}\,\} ,
\end{equation}
and since $(1+2^{\mu}|x-k|)^LD^\gamma\varphi(2^\nu x-k)$ is a bounded function, there are estimates
\begin{equation}\label{phi6}
   |D^\gamma \varphi_Q(x)|\leq C_{\gamma,L} |Q|^{-1/2-|\gamma|/n}(1+\ell(Q)^{-1}|x-x_Q|)^{-L}
\end{equation}
for each $L\in \mathbb{N}$ and multi-index $\gamma$ of length $|\gamma|\geq 0$. 

Moreover, we need pointwise estimates of convolutions with two parameters of
dilation and a translation; and it will often be crucial to have improved estimates in case one
factor has vanishing moments. So we recall that $\psi\in\cS$ is said to fulfill a moment
condition of order $M\ge-1$ if it annihilates all polynomials of degree $M$; that is, if
\begin{equation}
  \label{eq:moment}
   \int_{\bR^n} x^\alpha\psi(x)\,dx=0 \quad\text{for $|\alpha|\le M$}.
\end{equation}

\begin{lemma} \label{lem:p*p}
  If $\varphi,\psi\in\mathcal{S}(\bR^n)$ and $J$ is a dyadic cube of length $2^{-\mu}$ then there
  is for each $N>0$ a uniform estimate for $x\in\bR^n$ and $\nu\in\mathbb{Z}$,
  \begin{equation}
    \label{eq:p*p}
    (1+2^{\mu}|x-x_J|)^N|\psi(2^\mu(\cdot-x_J))*\varphi_\nu(x)|\le C_N2^{(N-n)(\mu-\nu)_+}.
  \end{equation}
When $\psi$ moreover fulfils a moment condition of order $M\in\bN_0$, then the above
improves to
  \begin{equation}
    \label{eq:p*pM}
    (1+2^{\mu}|x-x_J|)^N|\psi(2^\mu(\cdot-x_J))*\varphi_\nu(x)|\le
    C'_{N,M}2^{(N-n-(M+1))(\mu-\nu)_+}.
    \end{equation}
Similarly, when $\varphi$ satisfies a moment condition of order $M\in\bN_0$,
  \begin{equation}
    \label{eq:p*pM'}
    (1+2^{\mu}|x-x_J|)^N|\psi(2^\mu(\cdot-x_J))*\varphi_\nu(x)|\le C''_{N,M}2^{(N-n)(\mu-\nu)_+ -(M+1)(\nu-\mu)_+}.
  \end{equation}
In particular \eqref{eq:p*pM} or \eqref{eq:p*pM'} holds for all $M$ if $\psi$ or $\varphi$,
respectively, is admissible.
\end{lemma}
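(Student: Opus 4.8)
The plan is to reduce everything to a single convolution estimate obtained by expanding the inner convolution via Taylor's formula, exploiting rapid decay of Schwartz functions. First I would write out the convolution explicitly: with $\psi^J(y):=\psi(2^\mu(y-x_J))$ one has
\[
  \psi^J*\varphi_\nu(x)=\int_{\bR^n}\psi\bigl(2^\mu(x-y-x_J)\bigr)\,2^{\nu n}\varphi(2^\nu y)\,dy,
\]
and after the substitution $y\mapsto 2^{-\nu}z$ this becomes an integral of $\psi(2^\mu(x-x_J)-2^{\mu-\nu}z)\varphi(z)$ against $dz$. The whole point is the interplay of the two scales through the factor $2^{(\mu-\nu)_+}$, so I would split into the two regimes $\mu\le\nu$ and $\mu>\nu$ and treat them separately, since in the first regime $(\mu-\nu)_+=0$ and one only needs a crude bound, whereas the gain on the right-hand side is harvested in the second regime.

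For the basic estimate \eqref{eq:p*p}: when $\mu\le\nu$ the claim reduces to $(1+2^\mu|x-x_J|)^N|\psi^J*\varphi_\nu(x)|\le C_N$, which follows by the standard trick of writing $(1+2^\mu|x-x_J|)^N\lesssim (1+2^\mu|x-x_J-2^{-\nu}\cdot 2^\nu y\,(\text{shift})|)^N(1+2^\mu|y|)^N\le(1+\ldots)^N(1+2^\nu|y|)^N$ and absorbing the weights into the rapidly decreasing $\psi$ and $\varphi$; here I use $2^\mu\le2^\nu$. When $\mu>\nu$ I rescale so that the ``slow'' function is $\varphi$: here $\varphi_\nu$ is essentially constant on the scale of $\psi$'s support, and bounding $|\psi^J*\varphi_\nu(x)|$ by $\int|\psi(2^\mu(x-x_J)-2^{\mu-\nu}z)|\,|\varphi(z)|\,dz$ and distributing the weight $(1+2^\mu|x-x_J|)^N$ between the two factors as $(1+|2^\mu(x-x_J)-2^{\mu-\nu}z|)^N(1+2^{\mu-\nu}|z|)^N$ produces a factor $2^{(\mu-\nu)N}$ from the second weight acting against $\varphi$, while the total mass of $\psi$ contributes a harmless $2^{-n(\mu-\nu)}$ from the Jacobian $2^{(\mu-\nu)n}$ hidden in the change of variables; net exponent $(N-n)(\mu-\nu)$, as claimed.

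For the improved estimates \eqref{eq:p*pM} and \eqref{eq:p*pM'}, the extra decay comes from the moment condition. For \eqref{eq:p*pM} ($\psi$ has vanishing moments up to order $M$) I would, in the regime $\mu>\nu$, Taylor-expand the \emph{slow} factor $\varphi_\nu$: writing $\varphi_\nu(x-y)$ as its degree-$M$ Taylor polynomial in $y$ about $y=0$ plus the integral remainder of order $M+1$, the polynomial part is annihilated after integration against $\psi^J(\,\cdot\,)$ precisely because of \eqref{eq:moment} (up to the affine rescaling, which preserves the moment condition), and the remainder carries an extra factor $|y|^{M+1}$ which on the support of (the rescaled) $\psi$ is of size $2^{-(M+1)\mu}$ while the $(M+1)$st derivatives of $\varphi_\nu$ contribute $2^{(M+1)\nu}$ from the chain rule, giving the additional $2^{-(M+1)(\mu-\nu)}$ factor. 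For \eqref{eq:p*pM'} ($\varphi$ has vanishing moments) the roles are symmetric, one Taylor-expands $\psi^J$ in the regime $\nu>\mu$ instead; and the regime $\mu\le\nu$ in \eqref{eq:p*pM'} already follows from \eqref{eq:p*p} since there $(\nu-\mu)_+$ may be positive but $(\mu-\nu)_+=0$ — wait, one does need the gain $-(M+1)(\nu-\mu)_+$ there, which is exactly what the $\varphi$-moment expansion supplies. The final sentence of the lemma is then immediate: an admissible $\varphi$ or $\psi$ has $\hat\varphi$ (resp. $\hat\psi$) vanishing on a neighbourhood of $0$ by \eqref{phi2}, hence all derivatives of $\hat\varphi$ vanish at $0$, which by $\widehat{x^\alpha\varphi}=(-D)^\alpha\hat\varphi$ means all moments of $\varphi$ vanish, i.e. $M$ may be taken arbitrarily large.

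The main obstacle I anticipate is bookkeeping the weight distribution cleanly so that the power $N$ on the left is fully recovered \emph{and} the correct power of $2^{(\mu-\nu)_+}$ emerges, especially keeping track of the Jacobians from the two different rescalings and making sure the constants $C_N$, $C'_{N,M}$, $C''_{N,M}$ depend only on finitely many Schwartz seminorms of $\varphi$ and $\psi$ (uniformly in $x$, $\nu$, $\mu$ and in $x_J$). The Taylor-remainder estimate in the mixed regime, where one must simultaneously keep the weight $(1+2^\mu|x-x_J|)^N$ and extract $|y|^{M+1}$-decay without losing integrability, is the step that needs the most care; splitting the domain of the remainder integral into $\{2^{\mu-\nu}|z|\le\tfrac12|2^\mu(x-x_J)|\}$ and its complement (a ``near/far'' decomposition) is the standard device that makes it go through.
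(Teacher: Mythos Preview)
Your proposal is correct and follows essentially the same route as the paper: split into the regimes $\mu\le\nu$ and $\mu>\nu$, distribute the weight $(1+2^\mu|x-x_J|)^N$ onto the two factors via the submultiplicative inequality $(1+2^\mu|x-x_J|)^N\le(1+2^\mu|x-x_J-y|)^N(1+2^\mu|y|)^N$, and for the moment-improved versions Taylor-expand the \emph{slow} factor so that the polynomial part is annihilated by the moment condition on the other factor.

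One simplification worth noting: the ``near/far'' decomposition you anticipate for the Taylor-remainder step is not needed. The paper writes the convolution (for $\mu>\nu$) as $\int 2^{n(\nu-\mu)}\psi(z)\,\varphi(2^\nu(x-x_J)-2^{\nu-\mu}z)\,dz$ and uses the integral form of the remainder, which introduces the parameter $\theta\in[0,1]$. Applying the same submultiplicative weight inequality with $y=\theta\,2^{-\mu}z$ then gives $(1+2^\mu|y|)^N\le(1+|z|)^N$, which is absorbed by $\psi$, while the other factor $(1+2^\mu|x-x_J-y|)^N$ equals $(1+2^{\mu-\nu}|w|)^N$ for $w=2^\nu(x-x_J)-\theta\,2^{\nu-\mu}z$, the argument of $\partial^\alpha\varphi$; this contributes exactly the factor $2^{(\mu-\nu)N}$ and is absorbed by the decay of $\partial^\alpha\varphi$. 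No domain splitting is required, and the constants are visibly finite Schwartz seminorms of $\varphi$ and $\psi$.
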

Details on these estimates can be found in Appendix~\ref{p*p-app}.

To elucidate the limitations of the present set-up, we first give the following account:

\begin{example}  \label{Meyer}
  In a fundamental contribution Lemari\'e and Meyer~\cite{MR864650} proved that an \emph{orthonormal} basis of Schwartz function
  wavelets exists in $L_2(\bR)$. This was briefly simplified by Meyer on p.~75
  in \cite{MR1228209} as resulting from an \emph{even} function $\theta_1\in C_0^\infty(\bR)$ with $\theta_1\ge0$ if
  \begin{equation} \label{Meyer0}
    \hat\psi(\xi)=\theta_1(\xi)e^{-\im\xi/2}  ,
  \end{equation}
  provided that $\theta_1(\xi)\ne0$ only holds for
  $2\pi/3\le|\xi|\le8\pi/3$, and that
  \begin{align} \label{Meyer1}
    \theta_1(\xi)^2+\theta_1(2\xi)^2&=1 \quad\text{for $2\pi/3\le |\xi|\le4\pi/3$},
\\  \label{Meyer2}
    \theta_1(\xi)^2+\theta_1(4\pi-\xi)^2&=1 \quad\text{for $4\pi/3\le |\xi|\le8\pi/3$}.    
  \end{align}
After a lengthy substantiation of the orthogonality, Daubechies described in her book
the result as relying on``quasi-miraculous cancellation''; cf.\ \cite[p.~119]{MR1162107}.
However, the cancellation simply comes from the orthogonality of even and odd functions on symmetric intervals $[-L,L]$:

Indeed, the possible $\theta_1$ are parametrised by the \emph{odd} $\chi\in C^\infty(\bR,\bR)$ with
$\chi\ge-1/2$ on $[0,\infty[\,$,
$\chi\equiv 1/2$ on $[\pi/3,\infty[\,$, so that $\chi\equiv -1/2$ on $\,]-\infty,-\pi/3]$, by means of 
\begin{equation} 
    \theta_1(\xi)^2  = \frac12+
    \begin{cases}
        \chi(\xi-\pi)   &\qquad\text{for $0<\xi\le 4\pi/3$},
    \\
        \chi(\pi-\xi/2)   &\qquad\text{for $\xi>4\pi/3$}.  
    \end{cases}
\end{equation}
Here \eqref{Meyer1} holds as $\chi$ is odd, whence we have
\eqref{phi4} if we take $\varphi=\psi$ and $\psi$ as in \eqref{Meyer0}. 

The induced wavelet family $2^{j/2}\psi(2^j x-k)$ is orthonormal: for two such wavelets $\psi'$,
$\psi''$ with $j'=j''$ we use $k=k'-k''$ and properties of parity and periodicity to get
\begin{equation}
  \begin{split}
  \langle \psi',\psi''\rangle &= 2\int_0^\infty \theta_1(\xi)^2 \cos(k\xi)\,d\xi/(2\pi)
\\
   &= \delta_{k,0}+\int_{-\pi/3}^{\pi/3} \chi(\eta)(\cos(k\eta+k\pi)+\cos(2k\eta))\,d\eta/\pi =\delta_{k',k''},    
  \end{split}
\end{equation}
since both cosines are even functions (the former is $-\cos(k\eta)$ for odd $k$).
If, say $j'=j''+ 1$ a substitution gives for $k=k'-2k''$, now because of the phase factor
$e^{-\im\xi/2}$ in \eqref{Meyer0},
\begin{equation}
  \begin{split}
  \langle \psi',\psi''\rangle & = 
     2\int_{2\pi/3}^{4 \pi/3} \sqrt2\theta_1(\xi)\theta_1(2\xi) \cos((k-\frac12)\xi)\,d\xi/(2\pi)
\\    &=\frac{\sqrt2}{\pi}\int_{-\pi/3}^{\pi/3}  (\frac14-\chi(\eta)^2)\sin((k-\frac12)\eta+k\pi)\,d\eta=0,      
  \end{split}
\end{equation}
as $\frac14-\chi^2$ is even and the sine is odd, also for odd $k$.  
The case $j'=j''-1$ is similar, and clearly $\langle \psi',\psi''\rangle = 0$ for $|j'-j''|\ge2$ by
the support condition on $\theta_1$.
(For the fact that the above orthonormal system spans a dense subspace, the
reader is referred to \cite{MR864650} or the literature on multiresolution analysis, e.g.\ the lucid exposition given
by Wojtasczyk~\cite[Sect.~3.2]{MR1436437} or by Frazier, Jawerth and Weiss~\cite[Thm.~7.11]{MR1107300}.)  
\end{example}

\begin{remark}
  \label{orto-rem}
Meyer's orthonormal wavelets basis for $L_2(\bR)$, which is recalled in Example~\ref{Meyer}, falls outside
the framework of Frazier and Jawerth~\cite{MR1070037}, and his $\psi$ is not even admissible in the
more general sense in Definition~\ref{adm}. In fact our set-up can never for $\psi=\varphi$ yield
an orthonormal basis of $L_2(\bR)$, for the normalisation would mean that
$2\pi =\int_{K_0\le|\xi|\le K^0} |\hat\varphi(\xi)|^2\,d\xi$,
and the constraint $K^0<\pi$ in \eqref{phi01} makes this impossible,
as $|\hat\varphi(\xi)|^2\le 1$ holds by \eqref{phi4}.
Frazier, Jawerth and Weiss~\cite{MR1107300} worked out a remedy by inserting the $\psi$ from
Example~\ref{Meyer} into $T_\psi$ and $\varphi=\psi$ into $S_\varphi$ and by a special argument obtained
$T_\psi S_\varphi=I$ on the isotropic space $\dot F^s_{p,q}$; cf.\ Theorem~7.20 in \cite{MR1107300}.
\end{remark}

\subsection{Maximal operators}
Let us recall some maximal inequalities pertaining to the Lebesgue space with mixed norm in \eqref{Lp}.

A fundamental tool is the maximal operator $M_k$ in the $x_k$-variable, $1\leq k\leq n$, for which we write $x=(x',x_k,x'')$,
whereby one of the groups $x'=(x_1,\dots,x_{k-1})$ and $x''=(x_{k+1},\dots,x_n)$ can be empty, to define for a locally integrable
function $f(x)$,
\begin{equation} \label{MK}
  M_k f(x)=\sup\limits_{I\in I_{x_k}} \dfrac{1}{|I|} \int_I |f(x',y_k,x'')| dy_k,
\end{equation}
whereby $I_{x_k}$ denotes the set of all intervals in $\bR$ containing $x_k$.

If $R$ is a rectangle $R=I_1\times\dots\times I_n$ it is easy to see from \eqref{MK} that 
\begin{equation}\label{M1}
  \int_R |f(x)| dx\leq |R|\cdot M_n(\cdots (M_1 f) \cdots)(x), \ \ \ \text{for every} \ x\in R.
\end{equation}
Usually we omit the parentheses in the repeated use of $M_1,\dots,M_n$.

For the mixed norms we use the following version of the Fefferman--Stein vector-valued maximal
inequality (cf.\ Stein~\cite{MR1232192}, and Bagby~\cite{MR0370171} for the mixed-norms):
if $\vec{p}=(p_1,\dots,p_n)$ for $0<p_1,\dots,p_n<\infty, \ 0<q\leq \infty$ and $0<t<\min(p_1,\dots,p_n,q)$ then
\begin{equation}\label{M2}
  \Big\|\Big(\sum_{\nu\in\bZ} \big(M_n\cdots M_1|f_{\nu}|^t\cdots)^{1/t} (\cdot)\big)^q \Big)^{1/q}\Big\|_{\vec{p}}
  \leq c\Big\|\Big(\sum_{\nu\in\bZ} |f_{\nu}|^q \Big)^{1/q}\Big\|_{\vec{p}}.
\end{equation}

In addition we need a well-known Peetre-type maximal inequality; cf.\ \cite{MR891189,MR2401510}:
for $t>0$ there exists a constant $c_t>0$, such that for every $f\in\cS'$
satisfying $\operatorname{supp}\hat{f}\subset [-2^{\nu},2^{\nu}]^{n}$ for some $\nu\in\bZ$, it holds for $x\in\bR^n$
and $\tau\ge n/t$ that
\begin{equation}\label{M3}
  \sup_{y\in\mathbb{R}^n} \frac{|f(y)|}{(1+2^{\nu}|y-x|)^{\tau}}\leq c_t \Big(M_n\cdots M_1 |f|^t\cdots\Big)^{1/t}(x).
\end{equation}

As a digression, we note the novelty that the spectral condition on $f$ is far from being
necessary, at least if $\tau>n/t$. In fact, as a corollary to Appendix~\ref{maximal_lemma}, cf.\ Remark~\ref{maximal-rem},
we have
\begin{proposition} \label{maximal-prop}
  When $\tau>n/t$, then Peetre's maximal inequality \eqref{M3} is also valid for 
  piecewise constant functions induced by a lattice of length $2^{\nu}$ in all variables for some $\nu\in \bZ$, that is,
  for functions having the form 
  $f(x)=\sum_{P\in\cQ,\ \ell(P)=2^{\nu}} a_P\mathds{1}_P(x)$ for $a_P\in\bC$.
\end{proposition}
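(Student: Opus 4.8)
The plan is to avoid spectral decompositions altogether: since such an $f$ is \emph{already} piecewise constant on a lattice, one well-chosen rectangle will let the iterated maximal estimate \eqref{M1} do all the work, and no approximation of $f$ by band-limited functions is needed. Throughout I write the common side length of the cubes as $2^{-\nu}$ (this only relabels the $\nu$ of the statement, and matches $\ell(Q_{\nu k})=2^{-\nu}$), so that the factor occurring in \eqref{M3} is $(1+2^{\nu}|y-x|)^{\tau}$ exactly as there. Because the cubes $P$ of a fixed side length tile $\bR^{n}$ and $f\equiv a_{P}$ on $P$, the supremum on the left of \eqref{M3} is simply $\sup_{P}|a_{P}|\,(1+2^{\nu}\operatorname{dist}(x,P))^{-\tau}$, with $\operatorname{dist}(x,P)=\inf_{y\in P}|y-x|$. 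Hence it suffices to produce a constant $c_{t}$ --- independent of $P$, $x$, $\nu$ and $f$ --- such that
\begin{equation*}
  |a_{P}|\,\bigl(1+2^{\nu}\operatorname{dist}(x,P)\bigr)^{-\tau}\le c_{t}\bigl(M_{n}\cdots M_{1}|f|^{t}\bigr)^{1/t}(x)
\end{equation*}
for every such cube $P$; taking the supremum over $P$ then gives \eqref{M3}. Since this is obtained cube by cube, the cases where either side is infinite require no separate treatment.

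The heart of the matter is a lower bound for $M_{n}\cdots M_{1}|f|^{t}(x)$ in terms of a single enveloping rectangle. Fix $P$, set $d=2^{\nu}|x-x_{P}|$, and let $R$ be the smallest axis-parallel rectangle containing both the point $x$ and the cube $P$; in each coordinate its edge has length at most $2^{-\nu}(1+d)$, so $|R|\le 2^{-\nu n}(1+d)^{n}$, whereas $R\supseteq P$ together with the disjointness of the cubes (so that $|f|^{t}=\sum_{P}|a_{P}|^{t}\mathds{1}_{P}$) gives $\int_{R}|f|^{t}\,dx\ge|P|\,|a_{P}|^{t}=2^{-\nu n}|a_{P}|^{t}$. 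Applying \eqref{M1} to the function $|f|^{t}$ and to this rectangle $R\ni x$ yields
\begin{equation*}
  2^{-\nu n}|a_{P}|^{t}\le\int_{R}|f|^{t}\,dx\le|R|\,M_{n}\cdots M_{1}|f|^{t}(x)\le 2^{-\nu n}(1+d)^{n}\,M_{n}\cdots M_{1}|f|^{t}(x),
\end{equation*}
hence $\bigl(M_{n}\cdots M_{1}|f|^{t}(x)\bigr)^{1/t}\ge|a_{P}|\,(1+d)^{-n/t}$. To close the argument, a short computation from $\operatorname{dist}(x,P)\le|x-x_{P}|$ and $|x-x_{P}|\le\operatorname{dist}(x,P)+\sqrt{n}\,2^{-\nu}$ gives $1+2^{\nu}\operatorname{dist}(x,P)\ge(1+\sqrt{n})^{-1}(1+d)$; combining this with $\tau\ge n/t$ (both factors being $\ge1$, so the exponent may be lowered from $\tau$ to $n/t$) one obtains
\begin{equation*}
  |a_{P}|\,\bigl(1+2^{\nu}\operatorname{dist}(x,P)\bigr)^{-\tau}\le(1+\sqrt{n})^{n/t}\,|a_{P}|\,(1+d)^{-n/t}\le(1+\sqrt{n})^{n/t}\bigl(M_{n}\cdots M_{1}|f|^{t}(x)\bigr)^{1/t},
\end{equation*}
which is the required per-cube bound, with $c_{t}=(1+\sqrt{n})^{n/t}$. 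I note in passing that this uses only $\tau\ge n/t$; the formally stronger hypothesis $\tau>n/t$ in the statement is just the one under which the general maximal lemma of Appendix~\ref{maximal_lemma} is phrased, of which the proposition may alternatively be read off as a special case.

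The only real obstacle is the geometric bookkeeping in that middle step: one must exhibit a \emph{single} rectangle that simultaneously contains $x$, contains $P$, and has volume at most a factor $(1+2^{\nu}|x-x_{P}|)^{n}$ larger than $|P|$. Once that is arranged nothing analytic remains --- the estimate \eqref{M1}, normally read as bounding a local average by the iterated maximal function, is here used in precisely that direction to convert the average of $|f|^{t}$ over $R$ into the sought lower bound for $M_{n}\cdots M_{1}|f|^{t}(x)$ --- and it is exactly this elementary device that makes the spectral hypothesis of \eqref{M3} superfluous in the piecewise-constant case.
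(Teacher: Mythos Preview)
Your proof is correct, and it is genuinely different from the paper's route. The paper derives the proposition from Lemma~\ref{l:star}: given $y\in P$ and $x\in Q$ (both with $\ell=2^{-\nu}$), it first replaces $(1+2^\nu|y-x|)^{-\tau}$ by $c(1+2^\nu|x_P-x_Q|)^{-\tau}$ via the triangle inequality, then majorises the single term by the \emph{full sum} $\sum_{\ell(P)=2^{-\nu}}|a_P|(1+2^\nu|x_P-x_Q|)^{-\tau}$, and finally invokes Lemma~\ref{l:star} with $\mu=\nu$; the proof of that lemma in turn uses an annular decomposition $\Omega_k$ and a geometric series, which is where the strict inequality $\tau>n/t$ enters. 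Your argument sidesteps the summation entirely: for each fixed $P$ you build one rectangle $R\ni x$ with $P\subset R$ and $|R|\le 2^{-\nu n}(1+2^\nu|x-x_P|)^n$, apply \eqref{M1} directly to $|f|^t$, and read off the bound. This is more elementary, gives the explicit constant $(1+\sqrt n)^{n/t}$, and, as you observe, works already for $\tau\ge n/t$. What the paper's detour buys is that Lemma~\ref{l:star} is needed elsewhere anyway (in Proposition~\ref{Tpsi-prop} via \eqref{s1}--\eqref{s3}), so the proposition comes for free once that lemma is in place; your approach trades that economy for a self-contained argument.
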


\section{The $\varphi$-transform} \label{transform-sect}
As a general framework we use the space  $\cS'/\cP$ consisting of tempered
distributions modulo polynomials.
However, when considering $\cS'/\cP$ as a topological vector space we shall adopt the notation of
Triebel~\cite[Ch.~5]{MR781540} and write $\cS'/\cP$ as $\cZ'$, which is the dual space of
\begin{equation}
  \label{Z-id}
  \cZ(\bR^n)=\Big\{\,\psi\in\cS\mid \int_{\bR^n}x^\alpha\psi(x)\,dx = 0 \text{ for all multiindices $\alpha$}\,\Big\}.
\end{equation}
We recall that $\cZ$ is closed in $\cS$, hence a Fr\'echet space; and $\cZ'=\cS'/\cP$
as $\cP=\cZ^\perp$. 
Denoting the quotient map by $\Q\colon \cS'\to\cS'/\cP$, 
the $\operatorname{w}^*$-topology on $\cZ'$ is induced by the seminorms
$\Q f\mapsto |\langle f, \phi\rangle_{\cS'\times\cS}|$ parametrised (only) by $\phi\in\cZ$.
So trivially $\Q$ is a continuous linear operator:
\begin{equation}
  \label{eq:Qop}
  \Q\colon \cS'\to\cZ'.
\end{equation}
To recall the $\varphi$-transform $S_\varphi$ (a discrete wavelet transform),
we shall in the sequel adhere to the common practice of referring to any family $\{a_Q\}_{Q\in\cQ}$ as
a ``sequence'', even when the countable index set $\cQ$ is considered without any numbering.
Occasionally $(a)_Q$ will indicate the value of the sequence $a$ at $Q$. 

\begin{definition}\label{phi-transform}
Let $\varphi$ be an admissible function. The $\varphi$-transform $S_\varphi$ is the map sending
each $f\in\cS'/\cP$ to the complex-valued sequence
$S_\varphi f=\{(S_\varphi f)_Q\}_{Q\in\cQ}$ with 
\begin{equation*}
  (S_\varphi f)_Q= \langle f, \varphi_Q \rangle
 \quad\text{for all $Q\in \mathcal{Q}$}.
\end{equation*}
\end{definition}

When $\psi$ is admissible, $T_\psi$ is the linear operator  defined \emph{tentatively} on sequences
$a=\{a_Q\}_{Q\in\cQ}$ having finite support (i.e.\ $a_Q\ne 0$ only for finitely many $Q\in\mathcal{Q}$) by
\begin{equation}
  \label{eq:Tpsi}
  T_\psi a=\sum_{Q\in\cQ} a_Q\psi_Q.
\end{equation}
$T_\psi$ is the so-called inverse $\varphi$-transform when $\varphi$, $\psi$ are admissible and fulfil \eqref{phi4}.
We sometimes refer to $T_\psi$ as the \emph{synthesis} operator, and to  $S_\varphi$
as the analysis operator.

Furthermore, we need to make sense of $T_\psi a$ in a concise way for a variety of sequences $a$ without
finite support. In this case the summation in \eqref{eq:Tpsi} has no a priori meaning, despite
the countability of $\cQ$.
Indeed, the \emph{counting measure} $\tau_{1+n}$ on $\cQ\simeq \bZ\times \bZ^n$
does not suffice alone, since the sum in \eqref{eq:Tpsi} should be a distribution.

Now, each $\psi_Q$ can be identified with an element of $\cZ'$, for the quotient operator $\Q$ in
\eqref{eq:Qop} is injective on the subset of admissible functions, as e.g.\ $\hat\psi_Q(\xi)=0$ in a
neighbourhood of $\xi=0$. Therefore our aim is to make sense of $T_\psi a$ in $\cZ'$.

To sum the values of $Q\mapsto a_Q\psi_Q$ we take recourse to integration with respect to $\tau_{1+n}$ in a weak sense. 
More precisely, we shall use the notion of a Pettis integral (or weak Bochner integral) of a vector
function $f\colon X\to F'$ with respect to a measure $\mu$ on a $\sigma$-algebra $\bE$ in a set
$X$; and $F'$ being the dual of some Fr\'echet space $F$. 

Namely, such $f$ is said to be \emph{Pettis integrable } (or weakly integrable) if the scalar
function given on $X$ by $x\mapsto \langle f(x),v\rangle$ is in the 
Lebesgue space $L^1(\mu)$ for every vector $v\in F$ and, moreover, the dual space $F'$ contains some vector,
written $\int_X f\,d\mu$, such that
\begin{equation}
  \label{eq:Pettis}
  \left\langle \int_X f\,d\mu, v\right\rangle = \int_X \langle f, v\rangle \,d\mu \qquad \text{for all $v\in F$}.
\end{equation}
In general it is not easy to give sufficient conditions for the existence of the Pettis integral
(its uniqueness is obvious).
But as we show below, it is manageable in case of $T_\psi a$.

We consider $f(Q)= a_Q\psi_Q$ defined on $X=\cQ$, with $\mu=\tau_{1+n}$ and $F=\cZ$.
Then the basic criterion for Pettis integrability of $f$ is that 
$\int_{\cQ}|a_Q||\langle \psi_Q,\phi\rangle|\,d\tau_{1+n}<\infty$ for arbitrary $\phi\in \cZ$,
where $\langle\psi_Q,\phi\rangle$ stands for the action of $\psi_Q\in\cZ'$ on $\phi$.
We denote by $L_1(\mathcal{Q},\langle\psi_Q,\phi\rangle\,d\tau_{1+n})$ the space of such sequences,
and find the condition that $a\in L_1(\mathcal{Q},\langle\psi_Q,\phi\rangle\,d\tau_{1+n})$ for all $\phi\in \cZ$.

\begin{theorem} \label{Tpsi-thm}
When $\psi$ is admissible, then 
  the operator $a\mapsto T_\psi a$ in \eqref{eq:Tpsi} has an extension to a linear map
\begin{equation}
  \label{eq:1}
  \bigcap_{\phi\in \cZ} L_1(\mathcal{Q},\langle\psi_Q,\phi\rangle\,d\tau_{1+n})
  \xrightarrow{\ T_\psi\ }\cZ'(\bR^n),
\end{equation}
which on each sequence $a=\{a_Q\}_Q$ in the intersection is given as a distribution in $\cZ'(\bR^n)$ by the
formula, where $\phi\in \cZ(\bR^n)$,  
\begin{equation}
  \label{eq:4}
  \langle T_\psi a,\phi\rangle
  =\int_{\mathcal{Q}} a_Q\langle\psi_Q,\phi\rangle\,d\tau_{1+n}.  
\end{equation}
Moreover, $T_\psi a$ equals the $\operatorname{w}^*$-limit in $\cZ'(\bR^n)$ resulting
  from arbitrary approximation of $a=\{a_Q\}_Q$ by truncation to sequences having 
  finite, increasing and exhausting supports. This property determines the extension $T_\psi$ uniquely.
\end{theorem}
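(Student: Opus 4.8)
The plan is to verify the three assertions of Theorem~\ref{Tpsi-thm} in order: (i) that formula \eqref{eq:4} genuinely defines an element of $\cZ'$ for each $a$ in the stated intersection; (ii) that the resulting map extends the tentative $T_\psi$ from \eqref{eq:Tpsi}; and (iii) that $T_\psi a$ is recovered as the w$^*$-limit of its truncations, this property pinning down the extension uniquely. For (i), fix $a\in\bigcap_{\phi\in\cZ}L_1(\cQ,\langle\psi_Q,\phi\rangle\,d\tau_{1+n})$. The integrability hypothesis says exactly that $Q\mapsto \langle a_Q\psi_Q,\phi\rangle$ lies in $L^1(\tau_{1+n})$ for every $\phi\in\cZ$, so $f(Q)=a_Q\psi_Q$ is Pettis integrable with respect to the counting measure $\tau_{1+n}$ in the sense of \eqref{eq:Pettis}, \emph{provided} one checks that the linear functional $\phi\mapsto\int_\cQ a_Q\langle\psi_Q,\phi\rangle\,d\tau_{1+n}$ is continuous on $\cZ$. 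Continuity is the point where a little work is needed: I would use the estimate \eqref{phi6} for $\psi_Q$ together with the vanishing-moment/decay properties of $\phi\in\cZ$ to bound $|\langle\psi_Q,\phi\rangle|$ and then invoke the closed-graph (or Banach--Steinhaus) theorem for the Fr\'echet space $\cZ$, exploiting that the partial sums over finite $\cQ_0\subset\cQ$ are manifestly continuous and converge pointwise on $\cZ$ by dominated convergence. This yields a well-defined $T_\psi a\in\cZ'$ satisfying \eqref{eq:4}, and linearity in $a$ is immediate from linearity of the integral.

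For (ii), if $a$ has finite support then $\tau_{1+n}$-integration is a finite sum, so \eqref{eq:4} reads $\langle T_\psi a,\phi\rangle=\sum_{Q}a_Q\langle\psi_Q,\phi\rangle=\langle\sum_Q a_Q\psi_Q,\phi\rangle$ for all $\phi\in\cZ$; since distributions in $\cZ'$ are determined by their action on $\cZ$, this is precisely $\sum_Q a_Q\psi_Q$ as in \eqref{eq:Tpsi}, so the new map restricts to the old one.

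For (iii), let $\{\cQ_j\}$ be any increasing sequence of finite subsets of $\cQ$ with $\bigcup_j\cQ_j=\cQ$, and let $a^{(j)}$ be $a$ truncated to $\cQ_j$. Then $T_\psi a^{(j)}=\sum_{Q\in\cQ_j}a_Q\psi_Q$ by (ii). For fixed $\phi\in\cZ$ the hypothesis $a\in L_1(\cQ,\langle\psi_Q,\phi\rangle\,d\tau_{1+n})$ lets me apply the dominated convergence theorem (dominating function $|a_Q\langle\psi_Q,\phi\rangle|$) to get $\langle T_\psi a^{(j)},\phi\rangle=\int_{\cQ_j}a_Q\langle\psi_Q,\phi\rangle\,d\tau_{1+n}\to\int_\cQ a_Q\langle\psi_Q,\phi\rangle\,d\tau_{1+n}=\langle T_\psi a,\phi\rangle$. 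Since this holds for every $\phi\in\cZ$, $T_\psi a^{(j)}\to T_\psi a$ in the w$^*$-topology of $\cZ'$, and the limit is independent of the chosen exhaustion because \eqref{eq:4} is. For uniqueness of the extension: any linear map agreeing with \eqref{eq:Tpsi} on finite sequences and sending each $a$ to the w$^*$-limit of its truncations must agree with the $T_\psi$ just constructed, since the truncations and their images are the same and w$^*$-limits in $\cZ'$ are unique.

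The main obstacle, as indicated, is the continuity of the functional $\phi\mapsto\int_\cQ a_Q\langle\psi_Q,\phi\rangle\,d\tau_{1+n}$ on $\cZ$ needed to land the Pettis integral in $\cZ'$ rather than merely in the algebraic dual; everything else is dominated convergence and the fact that elements of $\cZ'$ are separated by $\cZ$. I expect the cleanest route is Banach--Steinhaus: the maps $\phi\mapsto\langle T_\psi a^{(j)},\phi\rangle$ are continuous on the Fr\'echet space $\cZ$ and converge pointwise, hence their limit is continuous — this sidesteps any explicit seminorm bookkeeping with \eqref{phi6}.
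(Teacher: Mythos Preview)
Your proposal is correct and follows essentially the same route as the paper: dominated convergence gives pointwise convergence on $\cZ$ of the finitely supported partial sums, Banach--Steinhaus on the Fr\'echet space $\cZ$ then delivers continuity of the limit functional (so $T_\psi a\in\cZ'$), and the extension, w$^*$-approximation, and uniqueness arguments are exactly as in the paper. Your closing remark that Banach--Steinhaus ``sidesteps any explicit seminorm bookkeeping with \eqref{phi6}'' is precisely the paper's approach---no direct estimates on $\langle\psi_Q,\phi\rangle$ are needed here.
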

\begin{proof}
Obviously $\int_\mathcal{Q} a_Q\langle\psi_Q,\phi\rangle\,d\tau_{1+n}$
is well defined for every sequence $a$ belonging to the intersection in \eqref{eq:1} and every
$\phi\in \cZ$. 
For any numbering $Q_1, Q_2,\dots$ of the dyadic cubes, dominated convergence gives for $N\to\infty$,
\begin{equation}   \label{eq:2}
  \begin{split}  
  \left\langle\sum_{j=1}^{N} a_{Q_j}\psi_{Q_j},\phi\right\rangle 
  &=  \sum_{j=1}^{N} a_{Q_j}\langle\psi_{Q_j},\phi\rangle 
\\
  &=\int_{\cQ} \mathds{1}_{\{Q_1,\dots,Q_N\}} a_Q\langle\psi_Q,\phi\rangle\,d\tau_{1+n}
  \rightarrow \int_{\mathcal{Q}} a_Q\langle\psi_Q,\phi\rangle\,d\tau_{1+n}.
  \end{split}
\end{equation}
So according to the Banach--Steinhaus theorem for the Fr\'echet space $\cZ$, the
linear functionals $\varphi\mapsto \langle\sum_{j=1}^{N} a_{Q_j}\psi_{Q_j},\overline{\varphi}\rangle$ 
are equicontinuous $\cZ\to\bC$. In terms of increasing seminorms
$p_n(\phi)$ inducing the topology on $\cZ$, this means that for some $\delta>0$, $N\in\bN$ the
$0$-neighbourhood $\{\,\phi\mid p_N(\phi)<\delta\,\}$ is mapped into the unit ball in $\bR$;
so the functional $\Lambda$ defined as the above limit satisfies $|\Lambda(\phi)|\le
\delta^{-1}p_N(\phi)$ for $\phi\in\cZ$. Hence $\Lambda$ is continuous, i.e.\  
$\Lambda\in\cZ'(\mathbb{R}^n)$, and
\begin{equation}
  \label{eq:3}
  \langle \Lambda,\phi\rangle =   \lim_{N\to\infty}  \langle \sum_{j=1}^{N} a_{Q_j}\psi_{Q_j},\phi\rangle 
  =\int_{\mathcal{Q}} a_Q\langle\psi_Q,\phi\rangle\,d\tau.
\end{equation}
As the right hand side is independent of the numbering, so is $\Lambda$, i.e.\ $\Lambda$ depends
only on $a$ and $\psi$. Setting $T_\psi a=\Lambda$ we obtain \eqref{eq:4}. 

Clearly $a\mapsto T_\psi a$ is a linear map, and if $a$ has finite support a little algebra as in \eqref{eq:2} shows
that the finite sum $\sum_Q a_Q\psi_Q$ equals $T_\psi a$; so $T_\psi$ extends the map \eqref{eq:Tpsi}. 

Now let a sequence $a=\{a_Q\}_{Q\in\cQ}$, given in the intersection in \eqref{eq:1}, be
approximated by sequences $a^{(m)}$ of
finite support by truncation. As $a$ is a function on $\cQ$, this may be written in terms of characteristic functions of finite
sets $\cQ_m\subset\cQ$ as 
\begin{equation}
  a^{(m)}=\mathds{1}_{\cQ_{m}}a, \quad\text{for $m\in\bN$}.
\end{equation}
If $\cQ_1\subset \cQ_{2}\subset\dots$ and $\bigcup_m \cQ_m= \bR^n$, the supports of the
$a^{(m)}$ are increasing and they exhaust $\supp a$. Then it follows analogously to 
 \eqref{eq:2} that $T_\psi a^{(m)}= \sum_{Q\in\cQ_m} a_{Q}\psi_{Q}$ converges in
 $\cZ'(\bR^n)$ to the distribution $T_\psi a$; cf.~\eqref{eq:4}.

Finally, whenever $\tilde T_\psi$ extends the map in
\eqref{eq:Tpsi} to an operator as in \eqref{eq:1} having the property
just obtained for $T_\psi$, for some approximation $a^{(m)}$ with the properties above, then 
$\tilde T_\psi a=\lim_m \tilde T_\psi a^{(m)} =\lim_m T_\psi a^{(m)} = T_\psi a$ in $\cZ'$.
\end{proof}

Notice that $T_\psi a = \sum_Q a_Q\psi_Q$ whenever $\{a_Q\}$ has finite support. In general the sum 
must be understood as the Pettis integral $T_\psi a=\int_{\cQ}a_Q\psi_Q\,d\tau_{1+n}$ given by
\eqref{eq:4}; although the latter looks less intuitive. 
We proceed to show that it has a number of desired properties of $T_\psi$.

In practice the convergence questions related to application of $T_\psi$ may often be handled via the
classical estimate for $N>n$,
\begin{equation}  \label{kN-est}
  \sum_{k\in\bZ^n}(1+|k|)^{-N}\le \sum_{k_1,\dots,k_n\in\bZ}\prod_{j=1}^n (1+|k_j|)^{-\frac{N}n}
  \le \Big(1+2\sum_{m=1}^\infty m^{-\frac{N}n}\Big)^n <\infty.
\end{equation}
This is useful e.g.\ for the basic result that $T_\psi$ always is defined on a sequence of wavelet
coefficients:

\begin{proposition}
  \label{TS-prop}
 When a sequence $a=S_\varphi f$ for some $f\in\cS'/\cP$, then the general synthesis operator
 $T_\psi$ in Theorem~\ref{Tpsi-thm}, with any admissible $\psi$, is defined on $a$ and 
 \begin{equation}
   \label{eq:TS}
   \langle T_\psi(S_\varphi f),\phi\rangle =
  \sum_{\nu=-\infty}^\infty \sum_{k\in\bZ^n} (S_\varphi f)_{Q_{\nu k}} \langle\psi_{Q_{\nu k}} ,\phi\rangle
  \qquad \text{for $\phi\in\cZ$},
\end{equation}
where the terms are $\tau_{1+n}$-integrable on $\cQ$.
In short, $R(S_\varphi)\subset D(T_\psi)$.
\end{proposition}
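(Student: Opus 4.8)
The plan is to verify the Pettis‑integrability criterion from Theorem~\ref{Tpsi-thm}, i.e.\ to show that for every $\phi\in\cZ$ the sequence $a=S_\varphi f$ lies in $L_1(\cQ,\langle\psi_Q,\phi\rangle\,d\tau_{1+n})$, which amounts to the summability estimate
\[
  \sum_{\nu\in\bZ}\sum_{k\in\bZ^n}\bigl|\langle f,\varphi_{Q_{\nu k}}\rangle\bigr|\,\bigl|\langle\psi_{Q_{\nu k}},\phi\rangle\bigr|<\infty .
\]
Once this is established, Theorem~\ref{Tpsi-thm} immediately gives $a\in D(T_\psi)$ together with the formula \eqref{eq:TS}, the right‑hand side being the $\tau_{1+n}$‑integral of the $\tau_{1+n}$‑integrable terms; so the whole proposition reduces to this one double sum.

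First I would estimate the two factors separately. Since $f\in\cS'$ (pick a representative), there are $C>0$ and $d\in\bN_0$ with $|\langle f,g\rangle|\le C\,p_{d}(g)$ for a suitable Schwartz seminorm $p_d$ (say $p_d(g)=\sup_{|\gamma|\le d}\sup_x(1+|x|)^d|D^\gamma g(x)|$). Applying this to $g=\varphi_{Q_{\nu k}}$ and using the derivative bounds \eqref{phi6} for $\varphi_Q$ (with $L$ taken large), one gets a polynomial‑in‑$2^\nu$ bound on $|\langle f,\varphi_{Q_{\nu k}}\rangle|$ with geometric decay in $|k|$ away from the origin, uniformly for $\nu$ in any fixed half‑line; more precisely one obtains something like $|\langle f,\varphi_{Q_{\nu k}}\rangle|\le C'\,2^{\nu(d+n/2)}(1+2^{\nu}|x_{Q_{\nu k}}|+2^{\nu}\cdot 0)^{\dots}$—the point being a clean bound of the form $C'\,2^{a\nu_+}2^{b\nu_-}(1+|k|)^{-L}$ after absorbing the translation factor, with the crucial feature that the $\nu\to-\infty$ part is controlled because $\varphi$ is band‑limited away from $0$ (so $\langle f,\varphi_{Q_{\nu k}}\rangle$ depends only on $\cQ f$ and one may freely subtract polynomials). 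For the second factor, I would use Lemma~\ref{lem:p*p}: writing $\langle\psi_{Q_{\nu k}},\phi\rangle = |Q_{\nu k}|^{1/2}\,(\tilde\psi_\nu * \phi)(x_{Q_{\nu k}})$ (the involution $\tilde\psi$ enters because functionals are antilinear), the estimate \eqref{eq:p*pM'} applied with $\varphi\rightsquigarrow\tilde\psi$ admissible — hence satisfying a moment condition of every order $M$ — and with the band‑limited $\phi\in\cZ$ replaced by its Littlewood–Paley pieces, yields rapid decay of $|\langle\psi_{Q_{\nu k}},\phi\rangle|$ both in $|k|$ (for summing over $k$ via \eqref{kN-est}) and, geometrically, in $\nu$ in \emph{both} directions $\nu\to\pm\infty$.

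The main obstacle I expect is matching the two $\nu$‑asymptotics so that the product is geometrically summable over all $\nu\in\bZ$. For $\nu\to+\infty$ the $\varphi$‑factor can only be bounded by a positive power of $2^{\nu}$ (governed by the order $d$ of $f$), so one needs the $\psi$‑side to beat it; here one exploits that $\phi\in\cZ$ is itself band‑limited, so $\tilde\psi_\nu*\phi$ vanishes once $2^{-\nu}$ is below the spectral gap of $\phi$ — or, keeping $\phi$ general in $\cZ$, one uses \eqref{eq:p*pM} (moment condition on $\psi$ of arbitrarily high order $M$) to get decay $2^{-(M+1)\nu_+}$, which dominates $2^{d\nu_+}$ for $M$ large. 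For $\nu\to-\infty$ the roles reverse: the $\psi$‑factor grows like a power of $2^{-\nu}$ from the normalisation $|Q|^{1/2}$ but the moment condition on $\phi\in\cZ$ (infinite order of vanishing, \eqref{Z-id}) forces $\tilde\psi_\nu*\phi$ to be $\cO(2^{-\nu M'})$ for every $M'$, which together with the controlled growth of the $\varphi$‑side closes the estimate. Then the $k$‑sum is handled for each fixed $\nu$ by \eqref{kN-est}, after noting $|x_{Q_{\nu k}}|\approx 2^{-\nu}|k|$ so the decay $(1+2^{\nu}|x_{Q_{\nu k}}-y|)^{-N}$ translates into $(1+|k-2^{\nu}y|)^{-N}$ and sums to a constant uniform in $\nu$ and in the centre $y$ of $\phi$'s support. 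Collecting, the double sum is finite for every $\phi\in\cZ$, so $S_\varphi f\in\bigcap_{\phi\in\cZ}L_1(\cQ,\langle\psi_Q,\phi\rangle\,d\tau_{1+n})=D(T_\psi)$, and \eqref{eq:TS} is exactly \eqref{eq:4} written out in the double‑index numbering of $\cQ$. \qed
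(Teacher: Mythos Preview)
Your approach is essentially the paper's: estimate the two factors separately via the temperate order of $f$ and Lemma~\ref{lem:p*p}, exploiting the moment conditions on $\psi$ (for $\nu\to+\infty$) and on $\phi\in\cZ$ (for $\nu\to-\infty$), then sum over $k$ with \eqref{kN-est} and over $\nu$ by geometric decay.

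One concrete error: the first factor $|\langle f,\varphi_{Q_{\nu k}}\rangle|$ does \emph{not} decay in $|k|$. Applying the seminorm $p_d$ to $\varphi_{Q_{\nu k}}$ gives growth: substituting $x=2^{-\nu}(y+k)$ in $(1+|x|)^d|D^\gamma\varphi_{Q_{\nu k}}(x)|$, the supremum over $y$ picks up a factor $(1+2^{-\nu}|k|)^d$, yielding the paper's bound $|\langle f,\varphi_Q\rangle|\le c\,2^{d|\nu|+\nu n/2}(1+|k|)^d$. All $|k|$-decay therefore must come from the \emph{second} factor, so in the application of Lemma~\ref{lem:p*p} you need to take the exponent $N$ large enough (the paper uses $N=d+n+1$) that the product carries $(1+|k|)^{d-N}$, summable by \eqref{kN-est}. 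Also, for $\nu\to-\infty$ the correct conclusion from the moment condition on $\phi$ is $\cO(2^{\nu M'})$ (which decays), not $\cO(2^{-\nu M'})$. The detours through Littlewood--Paley pieces of $\phi$ and through ``subtracting polynomials from $f$'' are unnecessary: writing $\langle\psi_Q,\phi\rangle=\psi_Q*\tilde\phi(0)$ and applying Lemma~\ref{lem:p*p} directly with $\nu=0$, $x=0$ and $x_J=x_Q$ (so that $(1+2^\mu|x-x_J|)^N=(1+|k|)^N$) already gives everything needed for the second factor.
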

\begin{proof}
  To verify the $L_1$-condition in \eqref{eq:1} we shall prove 
$\sum_Q|\langle f,\varphi_Q\rangle||\langle \psi_Q,\phi\rangle|$ 
finite for every $\phi\in\cZ$. Since $f$ is temperate we have for
some $d>0$, if $Q=Q_{\mu k}$,
\begin{equation}
  \begin{split}
  |\langle f,\varphi_Q\rangle|
   &\le c\sum_{|\alpha|\le d}\sup_x|(1+|x|)^d D^\alpha(2^{n\mu/2}\varphi(2^\mu x-k)|
\\
   &\le c\sum_{|\alpha|\le d}\sup_y|(1+2^{-\mu}|y+k|)^d 2^{\mu(|\alpha|+n/2)}D^\alpha\varphi(y)|
\\
   &\le c(\varphi,d)2^{d|\mu|+\mu n/2}(1+|k|)^d.
  \end{split}
\end{equation}
To invoke Lemma~\ref{lem:p*p}, we observe that $\langle \psi_Q,\phi\rangle
=\psi_Q*\tilde\phi(0)$, so the estimates there apply for
$\nu=0$ and $x=0$. Thus $(1+2^\mu|x-x_Q|)^N= (1+|k|)^N$ for our cube $Q$.
We may therefore apply \eqref{kN-est} for $N=n+1$ if we take $N=d+n+1$ in Lemma~\ref{lem:p*p}. 

Indeed, we can make a crude estimate thus: for $\mu\ge 0$ we may use \eqref{eq:p*pM} for any $M>0$ as $\psi$ is
admissible, or for $\mu<0$ note that $\phi$ as a member of $\cZ$
fulfils \eqref{eq:p*pM'} for every $M>0$, to get 
\begin{equation}
  |\langle \psi_Q,\phi\rangle|\le C 2^{-|\mu|(M-d-1-n/2)}(1+|k|)^{-d-n-1}.
\end{equation}
So using Tonelli's theorem to pick a summation order for $Q=Q_{\mu k}$, 
and by taking e.g.\ $M>2d+1+n$, we have $\sum_{\mu\in\bZ}\sum_{k\in\bZ^n}|\langle f,\varphi_Q\rangle||\langle
\psi_Q,\phi\rangle|<\infty$. By Fubini's theorem the integral in \eqref{eq:4} therefore equals
the iterated sum in \eqref{eq:TS}.
\end{proof}

As a virtue of the proposition, it makes sense to study $T_\psi(S_\varphi f)$ and to derive
from \eqref{phi4} the wavelet decomposition formula
\begin{equation} \label{wave-id}
  T_\psi(S_\varphi f) = f \quad\text{ for all $f\in\cS'/\cP$}.  
\end{equation}
In two steps we give a proof based on our explicit definition of $T_\psi$ in Theorem~\ref{Tpsi-thm}.

The first step is to subject the $f$  in \eqref{wave-id} to Peetre's homogeneous Littlewood--Paley
decomposition, which we have derived in a precise version in Appendix~\ref{LP'-app}:
when condition \eqref{phi4} is satisfied by $\varphi$, $\psi$,
then $\hat\phi:=\cF\psi\overline{\cF\varphi}=\cF\psi\cF\tilde\varphi$ obviously fulfils \eqref{eq:xi0},
so by Proposition~\ref{LP-prop} there is, for each $f\in\cS'$, specific polynomials $P_{m,N}$ of degree
$m$ fulfilling
 \begin{equation} \label{eq:LWPP'}
    f= \sum_{\nu=-N}^\infty \psi_\nu*\tilde \varphi_\nu*f+P_{m,N}+R_m \qquad\text{in}\quad \cS'.
 \end{equation}
Here the remainder fulfils $\langle R_m,\omega\rangle=\cO(2^{-N(n+m+1-d)})$ for every
$\omega\in\cS$, with $d\ge0$ denoting the $\cS'$-order of $f$; cf.\ \eqref{S'-order} for the notion.
For $d\le n$ this estimate yields exponentially
fast convergence to $f$ in $\cS'$ for $N\to\infty$; the full statement in
Proposition~\ref{LP-prop} moreover shows that $P_{m,N}$'s individual terms $c_{\alpha,N}x^\alpha$ are $\cO(2^{-N(n+|\alpha|-d)})$ 
in $\cS'$-seminorm, so that in case $d<n$  even $P_{m,N}=\cO(2^{-N})\to0$  for $N\to\infty$.

The polynomials $P_{m,N}$ are asymptotically uniquely given (as the Taylor polynomials of a 
convolution $2^{-nN}\Phi(2^{-N}\cdot)*f$, cf.~\eqref{fPhi-id}), but since the degree $m$
is at our disposal, we can for a general temperate order $d$ of $f$ arrange that $n+m+1>d$, so that
at least the remainder $R_m$ converges to $0$ in $\cS'$ for $N\to\infty$. However,
the terms of $P_{m,N}$ with $|\alpha|\le d-n$ do not necessarily go to $0$, so the convergence of
$\sum_{\nu\ge-N}\psi_\nu*\tilde \varphi_\nu*f$ to $f$ for $N\to\infty$ is only obtained in $\cZ'=\cS'/\cP$.

To convert \eqref{eq:LWPP'} into a summation over $\cQ$, we need a convenient result asserting that certain
convolution integrals over $\bR^n$ can be replaced by convolution over a discrete subgroup. This
fact is not surprising, and we state the result in Lemma~\ref{conv-prop} below providing  both a
pointwise limit and convergence in $\cS'$ of the sum \eqref{conv-id}. 
The result of the lemma was used implicitly in \cite{MR2186983,MR808825}.

\begin{lemma}
  \label{conv-prop}
If $\phi\in\cS$ and $g\in\cS'$ satisfy that $\supp\hat\phi$ and $\supp\hat g$ are subsets of $\,]-L,L[\,^n$ for
some $L>0$, then
\begin{equation}  \label{conv-id}
  \phi* g(x)=\left(\frac\pi{L}\right)^n \sum_{k\in\bZ^n} \phi\left(x-\frac\pi{L}k\right)g\left(\frac\pi{L}k\right).
\end{equation}
The sum converges absolutely and unconditionally in the Fr\'echet space $C^\infty(\bR^n)$ 
and in $\cS'(\bR^n)$.
\end{lemma}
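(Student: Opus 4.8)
The plan is to reduce the identity \eqref{conv-id} to the classical Poisson summation formula, applied for each fixed $x$ to the Schwartz function $y\mapsto\phi(x-y)g(y)$, the point being that its Fourier transform is supported strictly inside the cube $\,]-2L,2L[\,^n$ that is dual to the sampling lattice $\tfrac\pi L\bZ^n$.

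First I would record the regularity of the data: since $\supp\hat g$ is compact, the Paley--Wiener--Schwartz theorem shows that $g$ coincides with a smooth function all of whose derivatives satisfy $|\partial^\gamma g(y)|\le C_\gamma(1+|y|)^{M}$ for one common exponent $M$. Consequently $\phi*g$ is again such a slowly increasing smooth function, and $\phi*g(x)=\int_{\bR^n}\phi(x-y)g(y)\,dy$ is an absolutely convergent integral; in particular all pointwise values appearing in \eqref{conv-id} are well defined.

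Next I would fix $x\in\bR^n$ and set $F_x(y)=\phi(x-y)g(y)$. As the product of a Schwartz function with a slowly increasing smooth function, $F_x\in\cS(\bR^n)$, so $\widehat{F_x}\in\cS(\bR^n)$ as well; and since $\widehat{F_x}$ is, up to the constant $(2\pi)^{-n}$, the convolution $\widehat{\phi(x-\cdot)}*\hat g$ of a Schwartz function with the compactly supported distribution $\hat g$, its support lies in $\,]-L,L[\,^n+\,]-L,L[\,^n=\,]-2L,2L[\,^n$. The Poisson summation formula for $F_x$ along $\tfrac\pi L\bZ^n$, whose dual lattice is $2L\bZ^n$, then reads (in the normalisation $\hat f(\xi)=\int e^{-\im x\cdot\xi}f(x)\,dx$)
\[
  \sum_{k\in\bZ^n}F_x\!\left(\tfrac\pi Lk\right)=\left(\tfrac L\pi\right)^{\!n}\sum_{m\in\bZ^n}\widehat{F_x}(2Lm).
\]
Because $\widehat{F_x}$ vanishes off the \emph{open} cube $\,]-2L,2L[\,^n$ while $2Lm\notin\,]-2L,2L[\,^n$ for every $m\neq0$, only the term $m=0$ survives, and $\widehat{F_x}(0)=\int_{\bR^n}F_x(y)\,dy=\phi*g(x)$; rearranging gives \eqref{conv-id} pointwise. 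It is exactly the openness of the supports in the hypothesis that is used at this step.

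Finally, for the convergence statements I would estimate the summands directly: for each $N$ one has $|\phi(x-\tfrac\pi Lk)|\le C_N(1+|x-\tfrac\pi Lk|)^{-N}$, while $|g(\tfrac\pi Lk)|\le C(1+|\tfrac\pi Lk|)^{M}\le C(1+|x|)^{M}(1+|x-\tfrac\pi Lk|)^{M}$; choosing $N>M+n$ and summing as in \eqref{kN-est} yields
\[
  \sum_{k\in\bZ^n}\bigl|\phi\!\left(x-\tfrac\pi Lk\right)g\!\left(\tfrac\pi Lk\right)\bigr|\le C(1+|x|)^{M},
\]
uniformly for $x\in\bR^n$. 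The same bound holds with $\phi$ replaced by any derivative $\partial^\gamma\phi$, so $\sum_k\phi(\cdot-\tfrac\pi Lk)\,g(\tfrac\pi Lk)$ converges absolutely, hence unconditionally, in the Fr\'echet space $C^\infty(\bR^n)$; and since the $x$-uniform majorant $C(1+|x|)^{M}$ is integrable against any $\omega\in\cS$, dominated convergence shows that the partial sums converge to $\phi*g$ also in $\cS'(\bR^n)$, again unconditionally. The main obstacle is essentially bookkeeping: invoking Paley--Wiener--Schwartz for the smoothness and polynomial growth of $g$, and keeping track of the Fourier normalisation so that $\supp\widehat{F_x}$ really sits strictly inside the cube dual to $\tfrac\pi L\bZ^n$, which is what collapses Poisson summation to its single zeroth term.
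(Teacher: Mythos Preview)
Your proof is correct and follows essentially the same approach as the paper: both apply Poisson summation to the Schwartz function $y\mapsto\phi(x-y)g(y)$, use that its Fourier transform is supported in the open cube $\,]-2L,2L[\,^n$ to collapse the dual sum to its zero term, and then bound the series via the rapid decay of $\phi$ against the polynomial growth of $g$ coming from Paley--Wiener--Schwartz. The only cosmetic differences are that the paper organises the growth estimate as in \eqref{g-est} (isolating a factor $(1+|\tfrac\pi L k|)^{-n-1}$ explicitly before invoking \eqref{kN-est}) and treats the $\cS'$-convergence by exhibiting a bounded family in $\cS$, whereas you go directly through a global majorant and dominated convergence; both routes are equivalent.
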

The optimality of the constant $\pi/L$ was amply elucidated by Meyer~\cite[Thm.1.1]{MR1228209}.

\begin{proof}
  If we apply Poisson's summation formula to the Schwartz function $\phi(x-\cdot) g$,
  cf.~\cite[(7.2.1)']{MR1996773},  we have for any $a>0$ (with pointwise convergence for $x\in\bR^n$)
  \begin{equation}
    \big(\frac{2\pi}{a}\big)^n\sum_{k\in\bZ^n} \phi\big(x-\frac{2\pi}{a}k\big)
    g\big(\frac{2\pi}{a} k\big) = \sum_{k\in\bZ^n} \cF(\phi(x-\cdot)g)(ak).
  \end{equation}
Here the Fourier transformed product is supported in $]-2L,2L[\,^n$ by the support rule of
convolutions. So for $a=2L$ the sum on the right-hand side is trivial for $k\ne 0$; i.e.\ the
sum equals $\cF(\phi(x-\cdot)g)(0)=\int_{\bR^n}\phi(x-y)g(y)\,dy= \phi*g(x)$.

For the absolute convergence we shall prove 
$\sum_{k\in\bZ^n} \sup_{x\in K} |D^\alpha \phi(x-\frac\pi{L}k)g(\frac\pi{L}k)|$ finite for
arbitrary multiindices $\alpha$ and compact sets $K\subset\bR^n$. Since $g$ is a slowly increasing
function we have for some $N>0$,
\begin{equation}  \label{g-est}
  \left|g\big(\frac\pi{L}k\big)\right| \le c\frac{(1+|x-\frac\pi{L}k|)^{N+n+1}(1+|x|)^{N+n+1}}{(1+|\frac\pi{L}k|)^{n+1}}.
\end{equation}
Here $(1+|x|)^{N+n+1}$ is bounded on the set $K$, and so is $(1+|x-\frac\pi{L}k|)^{N+n+1}$ times the Schwartz
function $D^\alpha\phi(x-\frac\pi{L}k)$ on $K\times\bZ^n$, so finiteness results at once from
\eqref{kN-est}.

In view of this, any numbering of the $k\in\bZ^n$ induces a Cauchy sequence in $C^\infty(\bR^n)$,
where the limit must equal $\phi*g$ by the first part of the proof. Thus it is independent
of the numbering.

For $\cS'$ the statement also boils down to \eqref{kN-est}, for  the finiteness
of $\sum_{k\in\bZ^n} |\langle \phi(x-\frac\pi{L}k)g(\frac\pi{L}k),\psi\rangle |$ follows when $\psi\in\cS$ from a
uniform bound, which via \eqref{g-est} is
reduced to a test of the fixed function $(1+|x|)^{N+n+1}$ against
$2^p\psi(x)\phi(x-\frac\pi{L}k)(1+|x-\frac\pi{L}k|^2)^{p}$, $p=(N+n+1)/2$, which by virtue of $\psi$ runs through a bounded set
in $\cS$ as $k$ varies in $\bZ^n$. Now the limit theorem for $\cS'$ yields (unconditional)
convergence to some $u\in\cS'$, which in $\cD'$ coincides with the limit in $C^\infty$, so $u=\phi*g$.
\end{proof}

As the second step towards \eqref{wave-id} 
we may apply Lemma~\ref{conv-prop} to $g=\tilde\varphi_\nu* f$ and $\phi=\psi_\nu$, taking
$L=2^\nu\pi$ to get a clean formulation; cf.\ \eqref{phi5} and the constraint $K^0<\pi$ in
\eqref{phi01}. Then \eqref{eq:LWPP'} gives
 \begin{equation} \label{eq:LWPP''}
    f= \sum_{\nu=-N}^\infty \big(2^{-n\nu}\sum_{k\in\bZ^n}\psi_\nu(x-2^{-\nu}k)
    \tilde\varphi_\nu*f(2^{-\nu}k)\big)+P_{m,N}+R_m.
 \end{equation}
Here $2^{-n\nu}=(|Q|^{1/2})^2$ is a product of two normalisation factors, one of which yields
the factor $\psi_Q(x)$ in the sum, cf.\ \eqref{phi7}, and since we use sesqui-linear pairings,
\begin{align} \label{eq:psiQ}
   2^{-n\nu/2}\psi_\nu(x-2^{-\nu}k)&=\psi_Q(x)
\\
  \label{eq:phiQ}
   2^{-n\nu/2}\tilde\varphi_\nu*f(2^{-\nu}k)&=\langle f,
|Q|^{1/2}\varphi_\nu(\cdot-2^{-\nu}k)\rangle= \langle f, \varphi_Q\rangle.
\end{align}
The latter expression equals $S_\varphi\Q f$, but we usually just write $S_\varphi f$, as
any polynomial clearly can be added to $f$ without changing its wavelet coefficients.

We now obtain the decomposition of each $f\in\cZ'$ as a
``linear combination'' of the wavelets $\psi_Q(x)$. More precisely, it is a Pettis integral of the
building blocks $\langle f,\varphi_Q\rangle \psi_Q (x)$,
although for convenience we simply write it as a sum.

\begin{proposition}   \label{l:identity}
When $\varphi,\psi$ are admissible and satisfy \eqref{phi4}, then one has for every
$f\in\cS'/\cP$ that, with convergence in $\cZ'(\bR^n)$,
\begin{equation}  \label{TS-id}
  f(x)=\sum\limits _Q \langle f,\varphi_Q\rangle \psi_Q (x)=T_\psi \circ S_\varphi f(x).
\end{equation}
More precisely, there is $\cS'$-convergence to $f$ of the iterated sum in
\eqref{eq:LWPP''} with the identifications \eqref{eq:psiQ}--\eqref{eq:phiQ}; and error term
$R_m=\cO(2^{-N(n+m+1-d)})$, cf.\ \eqref{eq:LWPP'}.
\end{proposition}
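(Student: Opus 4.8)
The plan is to combine the Littlewood--Paley decomposition \eqref{eq:LWPP'} with the discretisation from Lemma~\ref{conv-prop}, and then pass to the quotient $\cZ' = \cS'/\cP$ to kill the polynomial corrections. First I would record that, given admissible $\varphi,\psi$ fulfilling \eqref{phi4}, the function $\hat\phi := \cF\psi\,\overline{\cF\varphi}$ satisfies the hypothesis \eqref{eq:xi0} of Proposition~\ref{LP-prop} in Appendix~\ref{LP'-app}; the telescoping in \eqref{phi4} gives $\sum_{\nu\in\bZ}\hat\phi(2^{-\nu}\xi)=1$ for $\xi\ne0$, and the annulus support \eqref{phi2} of $\hat\varphi,\hat\psi$ gives the required annular support of $\hat\phi$. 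Hence for any representative $f\in\cS'$ of a class in $\cS'/\cP$ there are polynomials $P_{m,N}$ of the chosen degree $m$ with \eqref{eq:LWPP'} holding in $\cS'$, with remainder $R_m=\cO(2^{-N(n+m+1-d)})$, $d$ the $\cS'$-order of $f$.

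Next I would feed $g=\tilde\varphi_\nu*f$ and $\phi=\psi_\nu$ into Lemma~\ref{conv-prop}; the spectral supports of both lie in $\,]-2^\nu\pi,2^\nu\pi[\,^n$ by \eqref{phi5} together with the constraint $K^0<\pi$ in \eqref{phi01}, so the lemma applies with $L=2^\nu\pi$, yielding \eqref{eq:LWPP''} for each fixed $N$, with absolute and unconditional convergence of the inner $k$-sum in $\cS'$ (indeed in $C^\infty$). Rewriting the normalisation via \eqref{eq:psiQ}--\eqref{eq:phiQ} turns the inner sum into $\sum_{k\in\bZ^n}\langle f,\varphi_{Q_{\nu k}}\rangle\psi_{Q_{\nu k}}$, so \eqref{eq:LWPP''} reads $f=\sum_{\nu=-N}^\infty\sum_{k\in\bZ^n}\langle f,\varphi_{Q_{\nu k}}\rangle\psi_{Q_{\nu k}}+P_{m,N}+R_m$ in $\cS'$. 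Since the degree $m\ge-1$ is free, I choose $m$ with $n+m+1>d$ so that $R_m\to0$ in $\cS'$ as $N\to\infty$; thus the partial sums $\sum_{\nu=-N}^\infty\sum_k\langle f,\varphi_{Q_{\nu k}}\rangle\psi_{Q_{\nu k}}$ converge to $f-\lim_N P_{m,N}$ in $\cS'$ (when $d<n$ one may even take $m=-1$ and $P_{m,N}=0$, giving the sharper assertion of the proposition). Applying the continuous quotient map $\Q\colon\cS'\to\cZ'$ from \eqref{eq:Qop} annihilates every $P_{m,N}$, so we get convergence of $\sum_{\nu\ge-N}\sum_k\langle f,\varphi_{Q_{\nu k}}\rangle\psi_{Q_{\nu k}}$ to $\Q f$ in the $\operatorname{w}^*$-topology of $\cZ'$.

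It remains to identify this $\operatorname{w}^*$-limit with $T_\psi(S_\varphi f)$ as defined by the Pettis integral in Theorem~\ref{Tpsi-thm}. By Proposition~\ref{TS-prop} we already know $a:=S_\varphi f$ lies in $D(T_\psi)$, so the uniqueness clause in Theorem~\ref{Tpsi-thm} applies: $T_\psi a$ is the $\operatorname{w}^*$-limit in $\cZ'$ of $T_\psi(\mathds{1}_{\cQ_m}a)=\sum_{Q\in\cQ_m}a_Q\psi_Q$ along \emph{any} increasing exhausting sequence of finite sets $\cQ_m$. Taking $\cQ_m=\{Q_{\nu k}: -m\le\nu, |k|\le m\}$, say, exhausts $\cQ$ and the corresponding partial sums are exactly tails of the iterated sum above; so their $\operatorname{w}^*$-limit in $\cZ'$ is on one hand $T_\psi(S_\varphi f)$ and on the other $\Q f$, giving \eqref{TS-id}. (One must check the two families of partial sums have the same limit; this is routine once one notes, as in the proof of Proposition~\ref{TS-prop}, that the double series $\sum_{\nu,k}|\langle f,\varphi_{Q_{\nu k}}\rangle\langle\psi_{Q_{\nu k}},\phi\rangle|$ converges for each $\phi\in\cZ$, so all rearrangements and reorderings of the partial sums share the limit $\langle T_\psi S_\varphi f,\phi\rangle$.)

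The main obstacle is not any single estimate but the careful bookkeeping in the middle step: \eqref{eq:LWPP'} is an identity in $\cS'$ that depends on $N$ through both $P_{m,N}$ and $R_m$, and one must be sure that discretising each summand $\psi_\nu*\tilde\varphi_\nu*f$ via Lemma~\ref{conv-prop} does not disturb the convergence in $\cS'$ of the (now infinite, over $\nu\ge-N$) sum — this is exactly why Lemma~\ref{conv-prop} was stated with $\cS'$-convergence of the $k$-sum, not merely pointwise. After that, the only delicate point is matching the a priori different exhaustions of $\cQ$ (one ordered by scale-blocks, one arbitrary as in Theorem~\ref{Tpsi-thm}), which the absolute convergence of the tested double series renders harmless.
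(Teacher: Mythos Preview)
Your approach is essentially the same as the paper's: apply Proposition~\ref{LP-prop} with $\hat\phi=\cF\psi\,\overline{\cF\varphi}$, discretise each $\psi_\nu*\tilde\varphi_\nu*f$ via Lemma~\ref{conv-prop} with $L=2^\nu\pi$, choose $m$ with $n+m+1>d$ so $R_m\to0$, push through $\Q$ to kill $P_{m,N}$, and then identify the resulting iterated sum with $T_\psi(S_\varphi f)$ using Proposition~\ref{TS-prop}. The paper does the last step more directly: once $\Q f=\sum_\nu\sum_k(S_\varphi f)_Q\,\Q\psi_Q$ holds in $\cZ'$, testing against $\phi\in\cZ$ produces exactly the iterated series in \eqref{eq:TS}, which Proposition~\ref{TS-prop} already identifies as $\langle T_\psi(S_\varphi f),\phi\rangle$.

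Two small points to tighten. First, the phrase ``converge to $f-\lim_N P_{m,N}$ in $\cS'$'' is not quite right, since $\lim_N P_{m,N}$ need not exist in $\cS'$; what you actually have (and what you use) is that $\sum_{\nu\ge-N}\sum_k(\cdots)+P_{m,N}\to f$ in $\cS'$, and then $\Q$ kills $P_{m,N}$. Second, your sets $\cQ_m=\{Q_{\nu k}: -m\le\nu,\ |k|\le m\}$ are not finite (no upper bound on $\nu$), so they do not fit the hypothesis of the $\operatorname{w}^*$-approximation clause in Theorem~\ref{Tpsi-thm}, and their partial sums are not the same as the iterated partial sums $\sum_{\nu\ge-N}\sum_{k\in\bZ^n}(\cdots)$ either. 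Your parenthetical is the real argument here: the $\tau_{1+n}$-integrability from Proposition~\ref{TS-prop} makes the scalar series $\sum_Q a_Q\langle\psi_Q,\phi\rangle$ absolutely summable, so any exhaustion---finite truncations, scale-blocks, or the iterated order in \eqref{eq:TS}---gives the common value $\langle T_\psi(S_\varphi f),\phi\rangle$. That is precisely how the paper closes the proof, so you may simply invoke \eqref{eq:TS} rather than route through ad hoc $\cQ_m$.
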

\begin{proof}
  In \eqref{eq:LWPP''} we take $m$ so large that $n+m+1>d$, whence $R_m\to0$ for $N\to\infty$ as discussed 
  after \eqref{eq:LWPP'}.
  Then the outer sum in \eqref{eq:LWPP''} converges in $\cS'$ for $N\to\infty$, cf.\ \eqref{eq:LWPP'} or
  the appendix. And the inner sum converges in $\cS'$ to a temperate distribution, according to the last
  statement of Lemma~\ref{conv-prop}, so the quotient operator $\Q$ 
  commutes with both summations for $f\in\cS'$ because of its continuity in \eqref{eq:Qop}.
  So \eqref{eq:phiQ} gives
  \begin{equation} \label{Qf-id}
    \Q f = \sum_{\nu=-\infty}^\infty \Q(\sum_{k\in\bZ^n} \langle f,\varphi_Q \rangle \psi_Q)
         = \sum_{\nu=-\infty}^\infty \sum_{k\in\bZ^n} (S_\varphi f)_Q \Q\psi_Q
    \qquad\text{in $\cZ'$}.
  \end{equation}
Thus we get the first formula in \eqref{TS-id} for the elements of $\cS'/\cP$ (as $\Q\psi_Q$
identifies with $\psi_Q$), if we just write a sum with respect to $Q$.
Applying the continuous functional $\langle\cdot,\phi\rangle$ on both sides of \eqref{Qf-id} for an arbitrary
$\phi\in\cZ$, 
Proposition~\ref{TS-prop} gives that $\langle \Q f,\phi\rangle=\langle 
T_\psi(S_\varphi f),\phi\rangle$; whence the second formula in \eqref{TS-id}. The final remarks on
$\cS'$-convergence and $R_m$ was seen prior to the statement.
\end{proof}

As a corollary to the proof, note that when $\langle\cdot,g\rangle$ is applied to both sides of
\eqref{Qf-id}, since 
$\langle \psi_Q,g\rangle=\int \psi_Q\overline{g}\,dx=\overline{(S_\psi g)_Q}$,
one obtains for all $f\in\cZ'$, $g\in\cZ$,
  \begin{equation} \label{dual-id}
    \langle f,g\rangle   = \sum_{\nu=-\infty}^\infty \sum_{k\in\bZ^n} (S_\varphi f)_Q
    \overline{(S_\psi g)_Q}.
  \end{equation}
More intuitively, one could obtain this by noting that both sides of \eqref{dual-id} equal the following,
whenever $\cQ_1\subset \cQ_2\subset\dots$ are finite subsets fulfilling $\cQ=\bigcup_m\cQ_m$,
\begin{equation}
  \lim_{m\to\infty} \sum_{Q\in\cQ_m} \langle (S_\varphi f)_Q\psi_Q, g\rangle.
\end{equation}
This follows on the left of \eqref{dual-id} from insertion of $f=T_\psi(S_\varphi f)$ and use of the
$\operatorname{w}^*$-approximation property in  
Theorem~\ref{Tpsi-thm}; on the right from the $\tau_{1+n}$-integrability
in Proposition~\ref{TS-prop} and dominated convergence.

To simplify \eqref{dual-id} one may invoke the scalar product $\langle s,t \rangle=\sum_{Q\in\cQ} s_Q\overline{t_Q}$
defined for those sequences $s=\{s_Q\}_{Q\in\cQ}$ and $t=\{t_Q\}_{Q\in\cQ}$ for which the product $\{s_Q\overline{t_Q}\}_{Q\in\cQ}$ is in
$\ell_1(\cQ,\tau_{1+n})$: by the abovementioned integrability, \eqref{phi4} now more simply implies
  \begin{equation} \label{dual-id'}
    \langle f, g\rangle   = \langle S_\varphi f, S_\psi g\rangle
   \qquad\text{for all $f\in\cZ'$, $g\in\cZ$}. 
  \end{equation}
This justifies the ``Parseval identity'' alluded to by Frazier and Jawerth~\cite{MR1070037}, and 
these formulas can now be further specialised as done in \cite{MR808825,MR1070037}.

It is also obvious that $P=S_\varphi\circ T_\psi$ is a projection, or more precisely an
idempotent when \eqref{phi4} holds, as $T_\psi \circ S_\varphi=I$ then; cf.\
Proposition~\ref{l:identity}. In fact, $P=I$ holds 
precisely when the wavelet sequences $\{\varphi_Q\}_{Q\in\cQ}$ and $\{\psi_Q\}_{Q\in\cQ}$ form a biorthogonal system, i.e.\
\begin{equation} \label{biortho-id}
  \langle \psi_Q,\varphi_J\rangle = \int_{\bR^n}\psi_Q(x)\overline{\varphi_J(x)}\,dx =\delta_{Q,J}
  \qquad\text{(Kronecker delta)}.
\end{equation}
Although this is known in other contexts, it is perhaps instructive to note how nicely a formal
proof fits with the definition of $T_\psi$ by the Pettis integral:

\begin{proposition}
  \label{ST-prop}
If \eqref{phi4} holds, the identity $S_\varphi\circ T_\psi=I$ is equivalent to \eqref{biortho-id}
\end{proposition}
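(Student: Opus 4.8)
The statement to prove is that, assuming the reconstruction identity \eqref{phi4}, the operator identity $S_\varphi\circ T_\psi=I$ on $D(T_\psi)$ is equivalent to the biorthogonality relation \eqref{biortho-id}. The plan is to compute $S_\varphi(T_\psi a)$ directly from the Pettis-integral definition of $T_\psi$ in Theorem~\ref{Tpsi-thm}, and then read off what the condition $S_\varphi(T_\psi a)=a$ says at the level of the ``matrix'' $\langle\psi_Q,\varphi_J\rangle$.

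\medskip
\noindent\textbf{Key steps.} First I would fix $a=\{a_Q\}_{Q\in\cQ}\in D(T_\psi)$ and an arbitrary dyadic cube $J$, and note that since $\varphi$ is admissible, $\varphi_J\in\cZ$ may legitimately be used as a test function against $T_\psi a\in\cZ'$. Then by definition of $S_\varphi$ and by \eqref{eq:4},
\begin{equation*}
  (S_\varphi(T_\psi a))_J=\langle T_\psi a,\varphi_J\rangle=\int_{\cQ} a_Q\langle\psi_Q,\varphi_J\rangle\,d\tau_{1+n}=\sum_{Q\in\cQ} a_Q\langle\psi_Q,\varphi_J\rangle,
\end{equation*}
the last sum being absolutely convergent in whatever order (this is precisely the $L_1(\cQ,\langle\psi_Q,\varphi_J\rangle\,d\tau_{1+n})$ membership built into the domain of $T_\psi$). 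Next, for the direction ``biorthogonality $\Rightarrow$ $S_\varphi T_\psi=I$'': if \eqref{biortho-id} holds, the sum collapses to $a_J$, so $(S_\varphi(T_\psi a))_J=a_J$ for every $J$, i.e. $S_\varphi\circ T_\psi=I$. For the converse, ``$S_\varphi T_\psi=I$ $\Rightarrow$ biorthogonality'': I would test the hypothesis on the finitely supported sequences $a=\delta^{(P)}:=\{\delta_{Q,P}\}_{Q\in\cQ}$ for each fixed $P\in\cQ$, which certainly lie in $D(T_\psi)$. For these, $T_\psi\delta^{(P)}=\psi_P$ by \eqref{eq:Tpsi}, and the computation above gives $(S_\varphi\psi_P)_J=\langle\psi_P,\varphi_J\rangle$; requiring this to equal $(\delta^{(P)})_J=\delta_{P,J}$ for all $J$ yields exactly \eqref{biortho-id}. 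Finally, since \eqref{phi4} is assumed throughout, Proposition~\ref{l:identity} gives $T_\psi\circ S_\varphi=I$, so $P=S_\varphi\circ T_\psi$ is indeed an idempotent and the phrase ``$P=I$'' in the equivalence is well-posed; one may remark that the computation is purely formal in the sense that it only uses the defining weak identity \eqref{eq:4}, never any approximation argument.

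\medskip
\noindent\textbf{Main obstacle.} There is no real analytic difficulty here — the content is bookkeeping — but the one point that needs care is \emph{legitimacy of the test function}: $\varphi_J$ must belong to $\cZ$ for $\langle T_\psi a,\varphi_J\rangle$ to make sense and for the Pettis identity \eqref{eq:4} to apply, and this is exactly guaranteed by admissibility of $\varphi$ (its Fourier transform vanishes near the origin, so all moments vanish). The second point worth a sentence is that the interchange of $S_\varphi$ with the (possibly infinite) sum defining $T_\psi a$ is not an extra hypothesis but is encoded in $a\in D(T_\psi)$: the series $\sum_Q a_Q\langle\psi_Q,\varphi_J\rangle$ is absolutely convergent, so no reordering issue arises when one specialises to the Kronecker sequences or collapses the sum under \eqref{biortho-id}. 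Beyond that, the proof is a two-line verification in each direction.
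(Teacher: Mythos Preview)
Your proposal is correct and follows essentially the same route as the paper: compute $(S_\varphi(T_\psi a))_J=\int_{\cQ}a_Q\langle\psi_Q,\varphi_J\rangle\,d\tau_{1+n}$ via \eqref{eq:4}, then test on the Kronecker sequences $\{\delta_{Q,J_0}\}_Q$ for one direction and let the sum collapse for the other. Your additional remarks that $\varphi_J\in\cZ$ by admissibility and that absolute convergence is built into $D(T_\psi)$ are welcome clarifications, but the underlying argument is the same two-line verification as in the paper.
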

\begin{proof}
  By definition of $S_\varphi$ and $T_\psi$, any $a$ in $D(T_\psi)$ satisfies
  \begin{equation}
    \label{ST-id}
    (S_\varphi(T_\psi a))_J= \langle T_\psi a, \varphi_J\rangle = \int_{\cQ} a_Q\langle
    \psi_Q,\varphi_J\rangle\,d\tau_{1+n}
    \qquad \text{for all $J\in\cQ$}.
  \end{equation}
  When $S_\varphi\circ T_\psi=I$ holds, then insertion of the sequence $a=\{\delta_{Q,J_0}\}_{Q\in\cQ}$ from $D(T_\psi)$,
  cf.\ \eqref{eq:1}, shows that 
  $\delta_{J,J_0}=\int \delta_{Q,J_0}\langle \psi_Q,\varphi_J\rangle\,d\tau_{1+n} =
  \langle\psi_{J_0},\varphi_J\rangle_{\cS',\cS}$.
  The converse is clear from \eqref{ST-id}.
\end{proof}

In the present set-up, biorthogonality is also equivalent to the property that the wavelets
give rise to a basis of $\cZ'$; and this is always unconditional:

\begin{theorem} \label{basis-thm}
  If $\varphi$, $\psi\in\cS$ are admissible and fulfil the reconstruction identity
  \eqref{phi4}, then the following properties are equivalent:
  \begin{itemize}
  \item[{\rm(i)}] $S_\varphi\circ T_\psi=I$;
  \item[{\rm(ii)}] the biorthogonality condition \eqref{biortho-id} holds;
  \item[{\rm(iii)}] there exists a numbering $Q_1, Q_2,\dots$ of the dyadic cubes for which
    the corresponding sequence $\psi_{Q_j}(x)=2^{n\nu_j/2}\psi(2^{\nu_j} x-k_j)$, $j\in\bN$, constitutes a basis of
    $\cZ'$;
 \item[{\rm(iv)}] every numbering of $\cQ$ induces
    a basis $\{\psi_{Q_j}\}_{j\in\bN}$ of $\cZ'$.
  \end{itemize}
In the affirmative case every numbering $\{\psi_{Q_j}\}_{j\in\bN}$ is an \emph{unconditional} basis
of $\cZ'$.
\end{theorem}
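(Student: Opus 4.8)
The plan is to prove the cycle $\mathrm{(ii)}\Rightarrow\mathrm{(iv)}\Rightarrow\mathrm{(iii)}\Rightarrow\mathrm{(ii)}$, since $\mathrm{(i)}\Leftrightarrow\mathrm{(ii)}$ has already been established in Proposition~\ref{ST-prop}, and to read off unconditionality from the step $\mathrm{(ii)}\Rightarrow\mathrm{(iv)}$. The key fact used throughout is a consequence of the results just obtained: for \emph{any} $f\in\cZ'$ one has $S_\varphi f\in D(T_\psi)$ by Proposition~\ref{TS-prop} and $f=T_\psi(S_\varphi f)$ by Proposition~\ref{l:identity}; hence, by the $\operatorname{w}^*$-approximation property in Theorem~\ref{Tpsi-thm} (see also Corollary~\ref{Tpsi-cor}), for \emph{every} numbering $Q_1,Q_2,\dots$ of $\cQ$ the truncated sums $\sum_{j=1}^{N}(S_\varphi f)_{Q_j}\psi_{Q_j}$ converge to $f$ in $\cZ'$ as $N\to\infty$. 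Thus $\{(S_\varphi f)_{Q_j}\}_{j\in\bN}$ always \emph{represents} $f$ in the sense of the basis definition, for every numbering.

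For $\mathrm{(ii)}\Rightarrow\mathrm{(iv)}$ it remains to verify uniqueness of the coefficients. Suppose scalars $c_j$ satisfy $\sum_{j=1}^{N}c_j\psi_{Q_j}\to g$ in $\cZ'$. Pairing with $\varphi_{Q_i}\in\cZ$ and invoking the biorthogonality \eqref{biortho-id} gives $\langle g,\varphi_{Q_i}\rangle=\lim_{N\to\infty}\sum_{j=1}^{N}c_j\langle\psi_{Q_j},\varphi_{Q_i}\rangle=c_i$, the sum being constant for $N\ge i$. Hence $g=0$ forces every $c_i=0$; applying this to the difference of two representations of a fixed $f$ yields uniqueness. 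Together with the key fact, $\{\psi_{Q_j}\}_{j\in\bN}$ is a basis of $\cZ'$ for every numbering, i.e.\ $\mathrm{(iv)}$. Since a rearrangement of a numbering is again a numbering of $\cQ$, the key fact also shows that the rearranged expansion of any $f$ converges to $f$; thus every such basis is \emph{unconditional}, which settles the final assertion of the theorem as well. The implication $\mathrm{(iv)}\Rightarrow\mathrm{(iii)}$ is immediate.

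The main step is $\mathrm{(iii)}\Rightarrow\mathrm{(ii)}$. Fix a numbering $Q_1,Q_2,\dots$ for which $\{\psi_{Q_j}\}_{j\in\bN}$ is a basis of $\cZ'$. By the key fact, $\{(S_\varphi f)_{Q_j}\}_j$ represents $f$, and by the uniqueness inherent in the basis property it is \emph{the} coefficient sequence of $f$. Apply this to $f=\Q\psi_{Q_i}$, which is a nonzero element of $\cZ'$ because $\Q$ is injective on dilates and translates of admissible functions; its coefficient sequence is obviously $\{\delta_{i,j}\}_j$, since the finite series $\sum_j\delta_{i,j}\psi_{Q_j}$ equals $\psi_{Q_i}$. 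Comparing the two yields $(S_\varphi\psi_{Q_i})_{Q_j}=\langle\psi_{Q_i},\varphi_{Q_j}\rangle=\delta_{i,j}$ for all $i,j$, which is precisely \eqref{biortho-id}. Hence $\mathrm{(ii)}$ holds, and with Proposition~\ref{ST-prop} the equivalence of $\mathrm{(i)}$--$\mathrm{(iv)}$ is complete.

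I expect the only genuine difficulty to be the reconciliation of the weak (Pettis-integral) convergence built into $T_\psi$ with the honest series convergence required by the notion of a basis; this is exactly what the $\operatorname{w}^*$-approximation statement of Theorem~\ref{Tpsi-thm} supplies, once one also keeps track of the harmless identification of the admissible function $\psi_Q$ with its quotient class $\Q\psi_Q\in\cZ'$. The remaining verifications are routine.
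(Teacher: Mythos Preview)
Your proof is correct and follows essentially the same route as the paper: the equivalence $\mathrm{(i)}\Leftrightarrow\mathrm{(ii)}$ via Proposition~\ref{ST-prop}, then the cycle $\mathrm{(ii)}\Rightarrow\mathrm{(iv)}\Rightarrow\mathrm{(iii)}\Rightarrow\mathrm{(ii)}$, with unconditionality read off from the $\mathrm{(ii)}\Rightarrow\mathrm{(iv)}$ step via the $\operatorname{w}^*$-approximation property. Your ``key fact'' is exactly the content of Remark~\ref{numbering-rem} in the paper, and your arguments for uniqueness and for $\mathrm{(iii)}\Rightarrow\mathrm{(ii)}$ (test on $f=\psi_{Q_i}$) match the paper's almost verbatim; the only cosmetic point is that your parenthetical reference to Corollary~\ref{Tpsi-cor} is forward-looking, since that corollary is stated after this theorem---but Theorem~\ref{Tpsi-thm} alone already supplies everything you use.
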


\begin{remark} \label{numbering-rem}
We observe that \emph{any} numbering $Q_1, Q_2,\dots$ of $\cQ$
yields $f=\sum_{j=1}^\infty a_{Q_j}\psi_{Q_j}$ in $\cZ'$ for $a=S_\varphi f$. This results
from the $\operatorname{w}^*$-approximation property in
  Theorem~\ref{Tpsi-thm} (cf.\ its proof for the notation) by taking $\cQ_m=\{Q_1,\dots,Q_m\}$, 
  which yields for arbitrary $g\in\cZ$,   
  \begin{equation} \label{fsum}
    \langle f,g\rangle=\langle T_\psi a,g\rangle= \lim_{m\to\infty} \langle \sum_{j=1}^m a_{Q_j}\psi_{Q_j}, g\rangle=
     \langle \sum_{j=1}^\infty a_{Q_j}\psi_{Q_j}, g\rangle .
  \end{equation}
  More precisely, by the Banach--Steinhaus theorem there is a distribution
  $\sum_{j=1}^\infty a_{Q_j}\psi_{Q_j}$ in $\cZ'$ satisfying the last identity for $g\in\cZ$ (cf.\
  \eqref{eq:2} ff.), 
  whence $f$ is the sum of the series.
\end{remark}

\begin{proof} That (i) and (ii) are equivalent is just a restatement of Proposition~\ref{ST-prop}.

  Clearly (iv) implies (iii). Given (iii), there is to each $f\in\cZ'$ unique
  scalars $c_j$ such that $f=\sum_{j=1}^\infty c_j\psi_{Q_j}$. 
  Now also $f=\sum_{j=1}^\infty a_{Q_j}\psi_{Q_j}$ holds in $\cZ'$, cf.\ \eqref{fsum}, so
  $c_j=a_{Q_j}=\langle f,\varphi_{Q_j}\rangle$. For $f=\psi_Q$ it is obvious that
  $c_j=\delta_{Q_j,Q}$ whilst $\langle f,\varphi_{Q_j}\rangle=\langle \psi_Q,\varphi_{Q_j}\rangle$,
  whence (ii) holds.
 
  When (ii) is satisfied and a numbering is given, then in addition to the existence in
  \eqref{fsum} we get uniqueness thus: if also
  $\sum_{j=1}^m b_{Q_j}\psi_{Q_j}$ converges to $f=T_\psi a$, using (i) and (ii) and applying  
  the continuous functionals $\langle\cdot,\varphi_{Q_k}\rangle$ on $\cZ'$,
  \begin{equation}
    b_{Q_k}=\langle T_\psi a,\varphi_{Q_k}\rangle=(S_\varphi(T_\psi a))_{Q_k}=a_{Q_k} \quad\text{for any $k$}.
  \end{equation}
  Moreover $f\mapsto\langle f,\varphi_{Q_j}\rangle=a_{Q_j}$ is continuous, so 
  $\{\psi_{Q_j}\}$ is a Schauder basis for $\cZ'$; even an
  unconditional basis, as any bijection $p\colon\bN\to\bN$ via
  $\cQ_m=\{Q_{p(1)},\dots,Q_{p(m)}\}$ in the $\operatorname{w}^*$-approximation property
  gives $\sum_{j=1}^\infty a_{Q_{p(j)}}\psi_{Q_{p(j)}}=T_\psi a=f$. Hence (iv) and last part are obtained.
\end{proof}

Besides being an ingredient in the proof, our Remark~\ref{numbering-rem} also has an important
consequence. Namely, it shows that $T_\psi a$ can be calculated by summing the terms $a_Q\psi_Q$ in
any order, which was far from obvious at the outset. 

More surprisingly, this possibility of rearrangement is valid regardless of whether the wavelets $\psi_Q$ via any
numbering give an unconditional basis of $\cZ'$ or not (as the remark was made prior to the proof of Theorem~\ref{basis-thm}).
But it all comes out naturally by using the definition of $T_\psi$ by the Pettis integral, 
ultimately because this gave the precise description of its domain by the integrability condition
in \eqref{eq:1}. Summing up we have

\begin{corollary} \label{Tpsi-cor}
  For the synthesis operator $T_\psi$ in \eqref{eq:1}--\eqref{eq:4} the value on every sequence $a=\{a_Q\}$ in
  $D(T_\psi)$ is obtained from $T_\psi a=\sum_{j=1}^\infty a_{Q_j}\psi_{Q_j}$ by summing the terms
  as an infinite series in $\cZ'$ in any order.
\end{corollary}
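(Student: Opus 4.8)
The plan is to observe that the corollary is essentially a reformulation of what has already been established in Remark~\ref{numbering-rem} and Theorem~\ref{Tpsi-thm}, combined into a single statement valid for all of $D(T_\psi)$ rather than just the range of $S_\varphi$. So first I would fix an arbitrary sequence $a=\{a_Q\}_{Q\in\cQ}$ in $D(T_\psi)$, i.e.\ in the intersection $\bigcap_{\phi\in\cZ} L_1(\cQ,\langle\psi_Q,\phi\rangle\,d\tau_{1+n})$ of \eqref{eq:1}, and fix an \emph{arbitrary} numbering $Q_1,Q_2,\dots$ of the dyadic cubes. The goal is to show that the partial sums $\sum_{j=1}^m a_{Q_j}\psi_{Q_j}$, which are well-defined elements of $\cZ'$ since they are finite sums, converge in $\cZ'$ to $T_\psi a$.

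The key step is to invoke the $\operatorname{w}^*$-approximation property from the second half of Theorem~\ref{Tpsi-thm}: for the particular exhausting sequence of finite sets $\cQ_m=\{Q_1,\dots,Q_m\}$, which is increasing with $\bigcup_m\cQ_m=\cQ$, the truncations $a^{(m)}=\mathds{1}_{\cQ_m}a$ satisfy $T_\psi a^{(m)}\to T_\psi a$ in $\cZ'$. But $T_\psi a^{(m)}=\sum_{Q\in\cQ_m}a_Q\psi_Q=\sum_{j=1}^m a_{Q_j}\psi_{Q_j}$ precisely because $a^{(m)}$ has finite support and $T_\psi$ agrees with \eqref{eq:Tpsi} there. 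Hence $\sum_{j=1}^m a_{Q_j}\psi_{Q_j}\to T_\psi a$ in $\cZ'$, i.e.\ $T_\psi a=\sum_{j=1}^\infty a_{Q_j}\psi_{Q_j}$ as a series in $\cZ'$. Since the numbering was arbitrary, the series converges to the same limit $T_\psi a$ for every ordering of $\cQ$, which is the assertion. (Equivalently, one can unwind this via \eqref{eq:2}--\eqref{eq:3}: for each $g\in\cZ$ dominated convergence gives $\langle\sum_{j=1}^m a_{Q_j}\psi_{Q_j},g\rangle\to\int_\cQ a_Q\langle\psi_Q,g\rangle\,d\tau_{1+n}=\langle T_\psi a,g\rangle$, and the limit is manifestly independent of the numbering because the integral is.)

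The only point requiring a word of care — and really the whole reason the corollary deserves a statement — is that this holds \emph{without} any biorthogonality or unconditional-basis hypothesis: when \eqref{biortho-id} fails, the $\psi_Q$ need not form a basis of $\cZ'$ in any numbering (cf.\ Theorem~\ref{basis-thm}), so rearrangement-invariance does not come from an abstract unconditional-basis theorem. I would emphasise that it comes instead from the $L_1$-description of $D(T_\psi)$ in \eqref{eq:1}: the defining integrability condition $a\in L_1(\cQ,\langle\psi_Q,\phi\rangle\,d\tau_{1+n})$ is symmetric under all bijections of the index set $\cQ$, and it is exactly this absolute integrability against every $\phi\in\cZ$ that forces the scalar series $\sum_j a_{Q_j}\langle\psi_{Q_j},g\rangle$ to be unconditionally convergent for each $g\in\cZ$, hence the distributional series to be $\operatorname{w}^*$-unconditionally convergent. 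Thus there is no real obstacle; the proof is a two-line deduction from Theorem~\ref{Tpsi-thm} once one notices that the remark preceding Theorem~\ref{basis-thm} already handled the special case $a\in R(S_\varphi)$ and the same argument applies verbatim to all of $D(T_\psi)$.
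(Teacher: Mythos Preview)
Your proposal is correct and follows essentially the same approach as the paper: the corollary is presented there as a direct consequence of the $\operatorname{w}^*$-approximation property in Theorem~\ref{Tpsi-thm} (via the choice $\cQ_m=\{Q_1,\dots,Q_m\}$), exactly as you argue, with the paper likewise emphasising that the rearrangement-invariance stems from the $L_1$-integrability condition defining $D(T_\psi)$ rather than from any biorthogonality or basis property. Your observation that Remark~\ref{numbering-rem} treats only $a\in R(S_\varphi)$ but that the argument applies verbatim to all of $D(T_\psi)$ is precisely the point the paper is making when it states the corollary.
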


\begin{remark} \label{Meyer-rem}
  In view of Theorem~\ref{basis-thm} it would be interesting to know whether or not there exist
  wavelets in our framework that fulfil the biorthogonality condition \eqref{biortho-id}. We
  envisage that an explicit construction would require more than a single generator for $n>1$, but to keep the presentation
  simple, we have left this aspect to the future.
\end{remark}

\begin{remark}
  The explanation of Frazier and Jawerth~\cite{MR808825,MR1070037} left quite a
  burden with the reader; e.g.\ no argument was given for the
  (heuristically obvious) identification of the sum over $Q$ in \eqref{TS-id} 
  with $T_\psi\circ  S_\varphi$. 
  They did account for pointwise convergence  in \eqref{conv-id}, 
  but the $\cS'$-convergence in Proposition~\ref{conv-prop}
  was first stated by Bownik and Ho~\cite{MR2186983}.
  As the formula $T_\psi(S_\varphi f)=f$ only makes sense if $R(S_\varphi)\subset
  D(T_\psi)$, let us also point out that Proposition~\ref{TS-prop} seems to be a novelty---and
  so is the unified result in Theorem~\ref{basis-thm} that wavelets form unconditional bases in the
  topological vector space $\cZ'$ if and only if they are biorthogonal.
\end{remark}

\section{Mixed-norm Triebel--Lizorkin spaces} \label{MTL-sect}
We are now ready to introduce the homogeneous Triebel--Lizorkin spaces $\dot{F}^s_{\vec{p},q}$ based on the mixed norms 
\eqref{Lp}. These spaces generalize the classical
homogeneous Triebel--Lizorkin spaces (cf.\ \cite{MR781540,MR1163193} or \cite{MR1070037}), which
can easily be recovered from the special choice $\vec{p}=(p,\dots,p)$, for $0<p<\infty$, in the following definition.

\begin{definition}\label{def:T-L}
For $s\in\mathbb{R},\;\vec{p}=(p_1,\dots,p_n),$ with $0<p_1,\dots,p_n<\infty,\;0<q\leq \infty$ and
$\varphi$ admissible in the sense of Definition~\ref{adm}, the homogeneous mixed-norm Triebel--Lizorkin space
$\dot{F}^s_{\vec{p},q}$ is the set of all $f\in \cS'/\cP$ such that  
\begin{equation}\label{TLnorm}
  \|f\|_{\dot{F}^s_{\vec{p},q}}:=\Big \| \Big( \sum _{\nu\in \mathbb{Z}} (2^{\nu s}|\varphi_{\nu}\ast f|)^q \Big) ^{1/q}\Big \|_{\vec{p}}<\infty,
\end{equation}
with the $l_q$-norm replaced by $\sup _{\nu}$ for $q=\infty$.
\end{definition}

For a classical general reference on Sobolev spaces with mixed norms the reader is referred to the 
books by Besov, Il'in and Nikol'ski{\u\i}~\cite{MR519341,MR1450401}, the latter with extension to Triebel--Lizorkin spaces.
A small account of the isotropic spaces $\dot F^s_{p,q}$ can be found in the work of Runst and Sickel~\cite{MR1419319}.

At this point the reader should just consider $\dot{F}^{s}_{\vec{p},q}$ as a quasi-normed space. 
In the end of the section we shall obtain $\varphi$-independence of the above space and equivalent
quasi-norms, subject to \eqref{phi01}, together with its completeness. 

For now we mention the basic embeddings, where $\hookrightarrow$ is understood to mean linear continuous
injection. 

\begin{proposition} \label{ZZ'-prop}
For $s\in\bR$, $p_1,\dots,p_n\in\,]0,\infty[\,$ and $q\in\,]0,\infty]$, while $u\in[q,\infty]$, 
 \begin{gather} \label{qu-emb}
  \dot{F}^s_{\vec{p},q}\hookrightarrow \dot{F}^s_{\vec{p},u},
\\ 
  \label{eq:ZZ'}
  \cZ\hookrightarrow \dot{F}^s_{\vec{p},q} \hookrightarrow \cZ'.
\end{gather}
\end{proposition}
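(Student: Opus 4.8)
The plan is to prove the two embedding chains in Proposition~\ref{ZZ'-prop} separately, starting with the elementary monotonicity \eqref{qu-emb} and then treating the sandwiching \eqref{eq:ZZ'}.

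\textbf{The $\ell_q$-monotonicity.} For \eqref{qu-emb} I would simply note that for $0<q\le u\le\infty$ one has the pointwise inequality $\big(\sum_\nu |c_\nu|^u\big)^{1/u}\le \big(\sum_\nu |c_\nu|^q\big)^{1/q}$ for any scalar sequence $\{c_\nu\}$ (with the usual $\sup$ interpretation when $u=\infty$). Applying this with $c_\nu = 2^{\nu s}\varphi_\nu*f(x)$ at each fixed $x$, and then taking the $\|\cdot\|_{\vec p}$-quasi-norm of both sides (which is monotone because each iterated integral in \eqref{Lp} is monotone), yields $\|f\|_{\dot F^s_{\vec p,u}}\le \|f\|_{\dot F^s_{\vec p,q}}$ directly. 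This is immediate and needs no further input.

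\textbf{The embedding $\cZ\hookrightarrow \dot F^s_{\vec p,q}$.} Here I would take $f=\Q g$ for $g\in\cZ$ and estimate $\varphi_\nu*g(x)$ by the convolution bounds already available. Since $\cF(\varphi_\nu*g)=\hat\varphi(2^{-\nu}\cdot)\hat g$ vanishes to infinite order at $\xi=0$ and $g\in\cZ$ enjoys a moment condition of every order, Lemma~\ref{lem:p*p} (applied with $\mu=0$, i.e.\ $J$ the unit cube, after rescaling, or more conveniently the estimate \eqref{eq:p*pM'}/\eqref{eq:p*pM} with both factors admissible-like) gives for every $M,N$ a bound $|\varphi_\nu*g(x)|\le C_{M,N}\,2^{-M|\nu|}(1+|x|)^{-N}$ uniformly in $\nu\in\bZ$ and $x\in\bR^n$, where the constants depend on finitely many Schwartz seminorms of $g$. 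Multiplying by $2^{\nu s}$, summing the geometric series in $\nu$ (choosing $M>|s|$), taking the $q$-th root, and finally the $\|\cdot\|_{\vec p}$-quasi-norm of $(1+|x|)^{-N}$ — finite once $N$ is large enough, by \eqref{kN-est}-type reasoning on each one-dimensional integral — produces $\|\Q g\|_{\dot F^s_{\vec p,q}}\le C\, \max_{|\alpha|,|\beta|\le k} \sup|x^\alpha D^\beta g|$ for some $k$, which is exactly continuity $\cZ\to \dot F^s_{\vec p,q}$. Injectivity is clear since $\Q$ is injective on functions with $\hat g$ vanishing near $0$.

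\textbf{The embedding $\dot F^s_{\vec p,q}\hookrightarrow\cZ'$.} This is the step I expect to be the main obstacle, since one must control $\langle f,\phi\rangle$ for $\phi\in\cZ$ purely in terms of the norm \eqref{TLnorm}. The strategy is to pick admissible $\psi$ with $\sum_\nu \overline{\hat\varphi(2^{-\nu}\xi)}\hat\psi(2^{-\nu}\xi)=1$ (Lemma~\ref{phi4-lem}) and use the reconstruction identity in the form $\langle f,\phi\rangle = \sum_\nu \langle \varphi_\nu*f,\tilde\psi_\nu*\phi\rangle$, which is legitimate by Proposition~\ref{l:identity} (or rather its underlying Littlewood--Paley decomposition in $\cZ'$, noting $\phi\in\cZ$). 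For each $\nu$ I would bound $|\langle\varphi_\nu*f,\tilde\psi_\nu*\phi\rangle|$ by a Hölder-type estimate adapted to the mixed norm: write it as an integral of $(2^{\nu s}|\varphi_\nu*f|)\cdot(2^{-\nu s}|\tilde\psi_\nu*\phi|)$ and apply the mixed-norm Hölder inequality with exponents $\vec p$ and $\vec p\,'$ in each variable, so that the first factor is dominated by $\|f\|_{\dot F^s_{\vec p,q}}$ after pulling the $\nu$-sum inside via the $\ell_q$–$\ell_{q'}$ duality, while the second factor $\sum_\nu 2^{-\nu s}\|\tilde\psi_\nu*\phi\|_{\vec p\,'}$ (suitably with the $\ell_{q'}$-weight) is finite because $\phi\in\cZ$ makes $\tilde\psi_\nu*\phi$ decay like $2^{-M|\nu|}$ in $\nu$ and like $(1+|x|)^{-N}$ in $x$, exactly as in the previous paragraph. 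The delicate points are: handling $q<1$ (where Hölder must be replaced by summing $|\cdot|^q$ and using $\ell_q\hookrightarrow\ell_1$ after a $2^{\nu\epsilon}$ trick), and handling $p_j<1$ (where one first passes through a Peetre maximal function via \eqref{M3} and the Fefferman–Stein inequality \eqref{M2} to reduce to the Banach range, at the cost of enlarging $N$). Once $|\langle f,\phi\rangle|\le C\,\|f\|_{\dot F^s_{\vec p,q}}\cdot p_k(\phi)$ for a fixed $\cZ$-seminorm $p_k$, continuity of $\dot F^s_{\vec p,q}\to\cZ'$ follows, and injectivity is automatic since $f\in\cS'/\cP$ already. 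I would isolate the two low-exponent reductions as the technically heaviest part and, if the paper defers similar estimates to an appendix, cross-reference that rather than reproducing them here.
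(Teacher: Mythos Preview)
Your treatment of \eqref{qu-emb} and of $\cZ\hookrightarrow\dot F^s_{\vec p,q}$ coincides with the paper's: the paper also dismisses the first as trivial $\ell_q\hookrightarrow\ell_u$ monotonicity and proves the second by invoking Lemma~\ref{lem:p*p} with $\mu=0$, $x_J=0$, splitting $\nu\ge0$ (use \eqref{eq:p*pM'}, admissibility of $\varphi$) and $\nu<0$ (use \eqref{eq:p*pM}, infinite vanishing moments of $g\in\cZ$), and then integrating $(1+|x|)^{-N}$ in $L_{\vec p}$.

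For $\dot F^s_{\vec p,q}\hookrightarrow\cZ'$ your overall scheme (write $f=\sum_\nu\tilde\psi_\nu*\varphi_\nu*f$ via \eqref{phi4} and estimate $\langle f,g\rangle$ term by term) is the same as the paper's, but the termwise estimate is different, and your plan for small exponents has a gap. You propose a mixed-norm H\"older inequality with exponents $\vec p,\vec p\,'$ and $\ell_q$--$\ell_{q'}$ duality, which forces separate treatment of $p_j<1$ and $q<1$. Your $q<1$ fix via $\ell_q\hookrightarrow\ell_1$ and a geometric weight is fine, but the suggested $p_j<1$ fix ``pass through Peetre's maximal function via \eqref{M3} and Fefferman--Stein \eqref{M2} to reduce to the Banach range'' does not do what you need: those inequalities bound a maximal operator in $L_{\vec p}(\ell_q)$ by the \emph{same} quasi-norm, they do not convert an $L_{\vec p}$-bound with $p_j<1$ into one with $p_j\ge1$, so you still have no H\"older partner. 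The paper avoids all of this by replacing H\"older with the Nikol'ski\u\i{} inequality for band-limited functions (quoted from \cite[Prop.~4]{MR2319603}),
\[
\|\varphi_\nu*f\|_\infty\le c\,2^{\nu(\frac1{p_1}+\dots+\frac1{p_n})}\|\varphi_\nu*f\|_{\vec p},
\]
valid for \emph{all} $p_j>0$, together with the trivial bound $2^{\nu s}\|\varphi_\nu*f\|_{\vec p}\le\|f\|_{\dot F^s_{\vec p,q}}$. This gives directly
\[
|\langle f,g\rangle|\le\sum_{\nu}\|\varphi_\nu*f\|_\infty\|\psi_\nu*g\|_1
\le c\,\|f\|_{\dot F^s_{\vec p,q}}\sum_{\nu}2^{\nu(\frac1{p_1}+\dots+\frac1{p_n}-s)}\|\psi_\nu*g\|_1,
\]
and the last series is finite (indeed bounded by a Schwartz seminorm of $g$) by the same convolution estimates you already used. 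No case distinctions in $p_j$ or $q$ are needed. If you want to salvage your route, the missing ingredient is precisely this Nikol'ski\u\i{} step---and once you have it, the H\"older detour becomes superfluous.
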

\begin{proof}
The first line is trivial since $\ell_q\hookrightarrow\ell_u$.  
Continuity of $\cZ\hookrightarrow \dot{F}^s_{\vec{p},q}$ 
follows from Lemma~\ref{lem:p*p} by taking $\mu=0$ and
$x_J=0$ there: if $\psi\in\cZ$ then $(1+|x|)^{-N}$ is in $L_{\vec p}$ for some $N>n$ 
(say $p_jN/n>1$, all $j$); and the number
of vanishing moments $M$ in \eqref{eq:p*pM} can be taken so large that $-s+N-n-(M+1)<0$, which applies for
$\nu<0$ in \eqref{TLnorm}; whilst for $\nu\ge 0$ one can invoke \eqref{eq:p*pM'} for $M$ so large
that $s-(M+1)<0$, as $\varphi$ is admissible. This way $\|\psi\|_{\dot F^s_{\vec p,q}}\le
c(C'_{N,M}+C''_{N,M})$, where both $\psi\mapsto C'_{N,M}$ and $\psi\mapsto C''_{N,M}$ are continuous seminorms on
$\cS$; cf.\ the proof of Lemma~\ref{lem:p*p}.

Now $\dot{F}^s_{\vec{p},q} \hookrightarrow \cZ'$ follows from Proposition~\ref{LP-prop} by setting
$\hat\phi=\overline{\hat\psi}\hat\varphi$ with $\psi$ as in \eqref{phi4}, for if $f\in \dot F^s_{\vec p,q}$ and $g\in\cZ$,
\begin{equation}
   |\langle f,g\rangle|=|\langle \sum_{\nu\in\bZ} \tilde\psi_\nu*\varphi_\nu*f,g\rangle| 
   \le\sum_{\nu\in\bZ}\|\varphi_\nu*f\|_\infty\|\psi_\nu*g\|_1.
\end{equation}
Here $\|\varphi_\nu*f\|_\infty\le c2^{\nu(\frac1{p_1}+\dots+\frac1{p_n})}\|\varphi_\nu* f\|_{\vec p}$,
since $\varphi_\nu*f$ has spectral radius $K^02^{\nu}$;
cf.\ \cite[Prop.~4]{MR2319603}. By adding more terms in the $L_{\vec p}$-norm,
\begin{equation}
   |\langle f,g\rangle|
   \le c\|f\|_{\dot F^s_{\vec p,q}}\sum_{\nu\in\bZ}2^{\nu(\frac1{p_1}+\dots+\frac1{p_n}-s)}\|\psi_\nu*g\|_1.
\end{equation}
The final sum is finite, in fact bounded from above by a Schwartz seminorm on $g$ as one can see by
adapting the above proof of $\cZ\hookrightarrow \dot{F}^s_{\vec{p},q}$.
Hence $|\langle f,g\rangle| \le c\|f\|_{\dot F^s_{\vec p,q}}$.
\end{proof}

\begin{proposition}
There are Sobolev embeddings in the $\dot F^s_{\vec p,q}$-scale, namely
\begin{equation}
  \begin{split}
  \label{eq:SBE}
  \|f\|_{\dot{F}^{t}_{\vec{r},q}}\le c\|f\|_{\dot{F}^s_{\vec{p},\infty}} 
  &\quad\text{for}\quad t<s,\quad r_1\ge p_1,\dots, r_n\ge p_n,
\\
  &\quad\text{and}\quad 
  t-\frac1{r_1}-\dots-\frac1{r_n}=s-\frac1{p_1}-\dots-\frac1{p_n}.
\end{split}
\end{equation}
\end{proposition}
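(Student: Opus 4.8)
The plan is to follow the classical route for Sobolev embeddings in Triebel--Lizorkin spaces, adapting it to the mixed-norm setting by exploiting the fact that the iterated $L_{\vec p}$-norm is built from one-dimensional Lebesgue norms, so the scalar Sobolev inequality $L_p \hookrightarrow L_r$ (for $p\le r$, with the usual loss in the exponent) can be applied coordinate by coordinate. First I would reduce to a single dyadic block: for $f\in \dot F^s_{\vec p,\infty}$ the building block $\varphi_\nu*f$ has spectral support in $\{|\xi|\le K^02^\nu\}$, so by a Nikol'ski{\u\i}-type inequality in each variable (the one-dimensional version of the estimate quoted from \cite[Prop.~4]{MR2319603}), one has
\begin{equation}
  \|\varphi_\nu*f\|_{\vec r} \le c\,2^{\nu(\frac1{p_1}-\frac1{r_1}+\dots+\frac1{p_n}-\frac1{r_n})}\|\varphi_\nu*f\|_{\vec p},
\end{equation}
valid precisely because $r_j\ge p_j$ for each $j$ and $\varphi_\nu*f$ is band-limited with radius proportional to $2^\nu$.

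The second step is to pass from the single block to the $\ell^q$-aggregate. Write $g_\nu = 2^{\nu t}|\varphi_\nu*f|$ and $h_\nu = 2^{\nu s}|\varphi_\nu*f|$. The dimensional balance condition $t-\sum 1/r_j = s-\sum 1/p_j$ forces the $2^\nu$-power in the Nikol'ski{\u\i} estimate above to be exactly $2^{\nu(t-s)}$, hence $\|g_\nu\|_{\vec r}\le c\,2^{\nu(t-s)}\|h_\nu\|_{\vec p}$. Since $t<s$, taking the $\sup_\nu$ over the right-hand side controls each term with a summable geometric factor; the remaining point is that $\|(\sum_\nu g_\nu^q)^{1/q}\|_{\vec r}$ must be dominated not by $\sum_\nu \|g_\nu\|_{\vec r}$ directly (which would be wasteful and would not obviously interact well with the iterated norm), but by first using the embedding $\ell^\infty\hookrightarrow\ell^q$ is unavailable; instead I would use that for $q<\infty$, $(\sum_\nu g_\nu^q)^{1/q}\le \sum_\nu g_\nu$ only when $q\ge 1$, so more carefully one estimates $(\sum_\nu g_\nu^q)^{1/q}\le (\sum_\nu (c\,2^{\nu(t-s)}\sup_\mu h_\mu)^q)^{1/q} = c'\,(\sup_\mu h_\mu)\,(\sum_\nu 2^{\nu q(t-s)})^{1/q}$ pointwise in $x$ — here the key observation is that $\|h_\nu\|_{\vec p}$ is replaced by the pointwise majorant $\sup_\mu h_\mu(x)$, which is exactly $\|f\|_{\dot F^s_{\vec p,\infty}}$ after taking the $L_{\vec p}$-norm. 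Thus $\|f\|_{\dot F^t_{\vec r,q}}\le c''\,\|\sup_\mu h_\mu\|_{\vec r}$, and one more application of the band-limited embedding $\|\cdot\|_{\vec r}\le c\,\|\cdot\|_{\vec p}$ in each variable (again using $r_j\ge p_j$ and the dimensional balance) reduces this to $c\,\|\sup_\mu h_\mu\|_{\vec p}=c\,\|f\|_{\dot F^s_{\vec p,\infty}}$.

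The main obstacle I anticipate is the second Nikol'ski{\u\i}-type step applied to $\sup_\mu h_\mu$, which is no longer band-limited, so the quoted Proposition~4 of \cite{MR2319603} does not apply to it directly. The clean fix is to reorganise the argument so that the $\vec r\to\vec p$ comparison is only ever invoked on individual band-limited blocks: that is, bound $\|(\sum_\nu g_\nu^q)^{1/q}\|_{\vec r}\le (\sum_\nu \|g_\nu\|_{\vec r}^q)^{1/q}$ is false for $q<1$, so one instead keeps the $\vec r$-norm outside and uses the pointwise inequality $(\sum_\nu g_\nu^q)^{1/q}\le c\,(\sup_\mu h_\mu)$ \emph{before} taking any $L_{\vec r}$-norm; but $\sup_\mu h_\mu$ is not band-limited. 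The resolution, standard in the isotropic case, is to note that $\sup_\mu h_\mu(x) = \sup_\mu 2^{\mu s}|\varphi_\mu*f(x)|$ and to compare with the maximal function: by the Peetre maximal inequality \eqref{M3}, $|\varphi_\mu*f(x)|\le c\,(M_n\cdots M_1|\varphi_\mu*f|^{t_0})^{1/t_0}(x)$ for suitable $t_0<\min(p_j)$, and then the $L_{\vec r}$-norm of a maximal function is controlled via the mixed-norm maximal inequality combined with the one-dimensional $L_p\to L_r$ gain for functions with compactly supported Fourier transform in that variable. I would carry out the comparison block-by-block: for each fixed $\mu$ apply the $\vec p$-to-$\vec r$ band-limited embedding to $\varphi_\mu*f$, so $2^{\mu s}\|\varphi_\mu*f\|_{\vec r}\le c\,2^{\mu(s + \sum(1/p_j-1/r_j))}\|\varphi_\mu*f\|_{\vec p} = c\,2^{\mu t}\|\varphi_\mu*f\|_{\vec p}$ — wait, that is not quite $\sup$-compatible either. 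The genuinely robust path, which I would adopt, is: estimate $\|f\|_{\dot F^t_{\vec r,q}}$ first by $q=\infty$ via \eqref{qu-emb}-type reasoning combined with $t<s$ so that only finitely effective $\nu$ matter, then for each $\nu$ separately convert $\|2^{\nu t}\varphi_\nu*f\|$ measured in $\vec r$ into $\|2^{\nu s}\varphi_\nu*f\|$ measured in $\vec p$ using the band-limited one-dimensional Sobolev inequality, and finally bound the resulting $\sup_\nu\|2^{\nu s}\varphi_\nu*f\|_{\vec p}$ by $\|(\sup_\nu 2^{\nu s}|\varphi_\nu*f|)\|_{\vec p}=\|f\|_{\dot F^s_{\vec p,\infty}}$. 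This keeps every Nikol'ski{\u\i} estimate on a single band-limited block, which is where the hypotheses $r_j\ge p_j$ and the dimensional balance are genuinely used, and it is the place where one must be careful to verify the one-dimensional band-limited embedding holds with a constant uniform in the spectral radius after rescaling — a routine but essential check.
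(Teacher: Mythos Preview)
Your proposal has a genuine gap, and you have in fact already put your finger on it without resolving it. The pointwise estimate
\[
\Big(\sum_{\nu\in\bZ}(2^{\nu t}|\varphi_\nu*f(x)|)^q\Big)^{1/q}
\le c\,\Big(\sup_\mu 2^{\mu s}|\varphi_\mu*f(x)|\Big)\Big(\sum_{\nu\in\bZ}2^{\nu q(t-s)}\Big)^{1/q}
\]
is useless in the homogeneous setting because the geometric series over $\nu\in\bZ$ diverges (it would converge for $\nu\ge0$, which is why this naive bound suffices in the inhomogeneous case). Your fallback ``robust path'' does not repair this: applying the Nikol'ski{\u\i} inequality block by block and then taking $\sup_\nu$ yields only
\[
\sup_\nu\|2^{\nu t}\varphi_\nu*f\|_{\vec r}\le c\,\|f\|_{\dot F^s_{\vec p,\infty}},
\]
which is a Besov-type quantity with the $\ell_\infty$ \emph{outside} the $L_{\vec r}$-norm, not the Triebel--Lizorkin norm $\|(\sum_\nu(2^{\nu t}|\varphi_\nu*f|)^q)^{1/q}\|_{\vec r}$ you need. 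And your suggestion to ``estimate $\|f\|_{\dot F^t_{\vec r,q}}$ first by $q=\infty$'' goes the wrong way, since $\ell_q\hookrightarrow\ell_\infty$ gives $\|\cdot\|_{\dot F^t_{\vec r,\infty}}\le\|\cdot\|_{\dot F^t_{\vec r,q}}$, not the reverse.

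What is missing is precisely the device the paper invokes: the pointwise interpolation inequality of Brezis--Mironescu type,
\[
\|2^{(s_1\theta+s_2(1-\theta))\nu}x_\nu\|_{\ell_q}
\le c\,\|2^{s_1\nu}x_\nu\|_{\ell_\infty}^{\theta}\,\|2^{s_2\nu}x_\nu\|_{\ell_\infty}^{1-\theta},
\qquad s_1<s_2,\ 0<\theta<1,
\]
which the paper notes extends from $\nu\in\bN$ to $\nu\in\bZ$. The point is that bounding the $\ell_q$-sum by a \emph{single} weighted supremum cannot work over $\bZ$ (one tail always diverges), but two suprema with different exponents $s_1<t<s_2$ control both tails simultaneously. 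With this in hand, the paper simply refers to the argument for the inhomogeneous spaces in \cite{MR2319603}, where the iterated one-variable Nikol'ski{\u\i} inequalities you describe are combined with this interpolation step; the only modification needed for the homogeneous scale is the $\bZ$-indexed version of the inequality above. So your Nikol'ski{\u\i} ingredient is correct and relevant, but it must be paired with the two-sided interpolation inequality rather than the one-sided geometric bound you attempted.
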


This inequality may be obtained from the arguments given for the inhomogeneous spaces in
\cite{MR2319603}. Indeed, this proof requires no essential changes if one only observes
the following interpolation inequality, valid for $-\infty<s_1<s_2<\infty$, $0<\theta<1$, $0<q<\infty$,
\begin{equation}
  \|2^{(s_1\theta+s_2(1-\theta))j}x_j\|_{\ell_q}\le c\|2^{s_1j}x_j\|_{\ell_\infty}^{\theta} \|2^{s_2j}x_j\|_{\ell_\infty}^{1-\theta}.  
\end{equation}
This was proved for sequences $\{x_j\}$ with $j\in\bN$ by Brezis and Mironescu~\cite{MR1877265}, but their
argument based on monotonicity extends verbatim to sequences having $j\in\bZ$. 

\begin{remark} \label{emb-rem}
  Embeddings of the homogeneous spaces $\dot F^s_{\vec p,q}$ constitute a main area where their
  theory differs from that of the corresponding inhomogeneous spaces $F^s_{\vec p,q}$. For example
  \begin{equation}
    \dot F^s_{\vec p,q}\not\subset \dot F^{s-\varepsilon}_{\vec p,u} 
  \qquad\text{for $\varepsilon>0$ and $0< u\le \infty$}.
  \end{equation}
This addendum to \eqref{qu-emb} comes from the fact that $2^{-\nu\varepsilon}$ is unbounded for
$\nu\to-\infty$. To give a proof we assume 
$\dot F^s_{\vec p,q}\subset \dot F^{s-\varepsilon}_{\vec p,u}$. Then the inclusion operator is bounded, i.e.\
$\|f\|_{\dot{F}^{s-\varepsilon}_{\vec{p},u}}\le c\|f\|_{\dot{F}^s_{\vec{p},q}}$ for some $c>0$;
this follows from the closed graph theorem using \eqref{eq:ZZ'} and the completeness of the
$\dot F^s_{\vec p,q}$ shown in Corollary~\ref{Fspq-cor} below. 
If $q=\infty$ this means that for $t=s-\varepsilon$ and $\vec r=\vec p$ the conditions in \eqref{eq:SBE} are violated,
so a contradiction is obtained at once from the next remark.
Cases with $0<q<\infty$ can be reduced to this using \eqref{eq:SBE} on the right-hand side of the inequality.
\end{remark}

\begin{remark} \label{SBE-rem}
That the Sobolev inequality in \eqref{eq:SBE} is stated under sharp conditions on the
parameters can be deduced from the auxilliary Schwartz functions $\theta_k$ and $\rho_k$ in Lemma~4.1 of
\cite{MR1355014}: First we arrange that $\varphi=1$ where $K_1\le|\xi|\le K^1$; then we take $\hat\theta\in
C^\infty_0$ to have a small support in this annulus and translate it by $\xi=(2^k,0,\dots,0)$ to get $\theta_k$
for $k\ge1$;
hence $\|\theta_k\|=2^{sk}\|\theta\|_{\vec p}$ in every generic $\dot F^s_{\vec p,q}$, so $t\le s$ follows from the
Sobolev inequality, and if $t=s$ the embedding is a tautology for $u=\infty$ or else impossible since
$\ell_u\hookrightarrow \ell_\infty$. 
The functions $\rho_k(x)=\rho(2^k x)$ are made by dilation of a function with spectrum in the
annulus, hence have the generic $\dot F$-norms 
$\|\rho_k\|= \|\rho\|_{\vec p}2^{k(s-\frac1{p_1}-\dots-\frac1{p_n}})$, so for
  $k\to\pm \infty$ one finds $s-\frac1{p_1}-\dots-\frac1{p_n}\gtrless
  t-\frac1{r_1}-\dots-\frac1{r_n}$; whence the equality stated in \eqref{eq:SBE}.
Finally $r_j\ge p_j$ must hold, for one can localise the support of $\hat\rho$ 
to a small neighbourhood of $\xi_j=0$ inside the annulus and take $\tilde\rho_k$ to be like $\rho_k$ except that the
dilation is omitted in the variable $x_j$ so that 
$\|\tilde\rho_k\|= \|\rho_k\|2^{k\frac1{p_j}}$ for all $k\ge1$; then
the Sobolev inequality and the equality obtained above entail that
$1\le c2^{k(\frac1{p_j}-\frac1{r_j})}$ for all $k\ge1$, whence $r_j\ge p_j$. 
\end{remark}

Following Franke~\cite{MR847350} we establish the Fatou property, namely
that the centered balls of $\dot F^s_{\vec p,q}$ are stable under sequential convergence in $\cZ'$:
\begin{lemma}  \label{Fatou-lem}
  If $f^{(m)}$, $m\in\bN$, satisfy that $f^{(m)}\to f$ in the $\operatorname{w}^*$-sense in
  $\cZ'$, then
  \begin{equation}
    \label{eq:Fatou}
      \|f\|_{\dot F^s_{\vec p,q}} \le \liminf_m\|f^{(m)}\|_{\dot F^s_{\vec p,q}}.
  \end{equation}
\end{lemma}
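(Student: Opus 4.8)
The plan is to reduce \eqref{eq:Fatou} to Fatou's lemma for integrals and series, after replacing the convolutions $\varphi_\nu * f$ by convolutions $\varphi_\nu * f^{(m)}$ via the $\operatorname{w}^*$-convergence. First I would fix $\nu\in\bZ$ and $x\in\bR^n$ and observe that $\varphi_\nu*f(x)=\langle f,\tilde\varphi_\nu(x-\cdot)\rangle$ where the test function $\tilde\varphi_\nu(x-\cdot)$ lies in $\cZ$ (since $\varphi$ is admissible, its Fourier transform vanishes near the origin, so all moments vanish). Hence $f^{(m)}\to f$ in the $\operatorname{w}^*$-topology of $\cZ'$ gives the pointwise limit $\varphi_\nu*f^{(m)}(x)\to\varphi_\nu*f(x)$ for every $\nu$ and every $x$.

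Next I would pass to the liminf in the defining quasi-norm. For each fixed $x$, the map $(\text{values indexed by }\nu)\mapsto\big(\sum_\nu(2^{\nu s}|\cdot|)^q\big)^{1/q}$ is lower semicontinuous under pointwise convergence of the entries: this is just Fatou's lemma applied to the counting measure on $\bZ$ when $q<\infty$, and it is immediate for $q=\infty$ since $\sup$ is lower semicontinuous. Therefore, writing $g_m(x):=\big(\sum_\nu(2^{\nu s}|\varphi_\nu*f^{(m)}(x)|)^q\big)^{1/q}$ and $g(x)$ for the analogous expression with $f$, one has $g(x)\le\liminf_m g_m(x)$ for every $x$. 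Finally, applying Fatou's lemma again — this time to the iterated integral defining $\|\cdot\|_{\vec p}$, treating the innermost variable first and working outward through the nested integrals — yields
\[
  \|g\|_{\vec p}\le\big\|\liminf_m g_m\big\|_{\vec p}\le\liminf_m\|g_m\|_{\vec p},
\]
which is precisely \eqref{eq:Fatou}.

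The main technical point to be careful about is the last step: $\|\cdot\|_{\vec p}$ is an \emph{iterated} integral, so one cannot invoke ordinary Fatou directly, but must apply it successively in the variables $x_1,x_2,\dots,x_n$, using at each stage that $t\mapsto t^{p_{j+1}/p_j}$ is monotone (hence commutes with $\liminf$ in the right direction) together with the superadditivity/monotonicity needed to push the liminf outside each integral. Here it is convenient that one only needs the inequality $\|\liminf_m g_m\|_{\vec p}\le\liminf_m\|g_m\|_{\vec p}$, not an identity, so the exponents $p_j\in(0,\infty)$ cause no trouble even in the quasi-normed range $p_j<1$. I do not expect any genuine obstacle: everything rests on lower semicontinuity of $\ell_q$-quasinorms and of the nested Lebesgue integrals under pointwise convergence, and the only input about the $f^{(m)}$ is the elementary fact that $\operatorname{w}^*$-convergence in $\cZ'$ tested against $\tilde\varphi_\nu(x-\cdot)$ gives pointwise convergence of the Littlewood--Paley pieces.
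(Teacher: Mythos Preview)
Your proposal is correct and follows essentially the same approach as the paper: both use that the translated--dilated $\varphi$ lies in $\cZ$ to obtain pointwise convergence of the Littlewood--Paley pieces from the $\operatorname{w}^*$-convergence, then apply Fatou's lemma once for the counting measure on $\bZ$ (the $\ell_q$-sum) and $n$ times for the Lebesgue measure (the iterated $L_{\vec p}$-integral), using that $t\mapsto t^r$ commutes with $\liminf$ for $r>0$. Your write-up is in fact slightly more explicit than the paper's about the $q=\infty$ case and the iteration through the mixed norm.
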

\begin{proof}
  Using $\varphi_\nu$ from the $\dot F^s_{\vec p,q}$-norm 
  we set $f^{(m)}_\nu(x)=\langle f^{(m)},\overline{\varphi}_\nu(x-\cdot)\rangle$ and
  define $f_\nu=\varphi_\nu*f(x)$ analogously. Since $\varphi$ is in $\cZ\subset\cS$, this can be
  read as a scalar product  on $\cS'\times\cS$, so $f^{(m)}_\nu$ and $f_\nu$ are
  $C^\infty$-functions by the Paley--Wiener--Schwartz Theorem. Clearly
  $f^{(m)}_\nu(x)\to f_\nu(x)$ pointwise for $m\to\infty$, so we obtain
\begin{equation}
  \begin{split}
     \|f\|_{\dot F^s_{\vec p,q}}= \|\liminf_m f^{(m)}\|_{\dot F^s_{\vec p,q}}
          &\le \| \liminf_m (\sum_\nu 2^{s\nu q}|f^{(m)}_\nu(x)|^q)^{\frac1q} \|_{\vec p} 
\\
          &\le \liminf_m \| (\sum_\nu 2^{s\nu q}|f^{(m)}_\nu(x)|^q)^{\frac1q} \|_{\vec p}
  \end{split}
  \end{equation}
by using Fatous's lemma for the counting measure on $\bZ$ and $n$ times for the Lebesgue measure
(and that $(\liminf_m|x_m|)^t=\liminf_m|x_m|^t$ for $t>0$). 
\end{proof}

The discrete analogue of Triebel--Lizorkin spaces is the space of sequences that we introduce here:

\begin{definition}\label{def:dT-L}
For $s\in\mathbb{R},\;\vec{p}=(p_1,\dots,p_n),$ with $0<p_1,\dots,p_n<\infty,\;0<q\leq \infty$,
the sequence space $\dot{f}^s_{\vec{p},q}$ consists of all complex-valued $a=\{a_Q\}_{Q\in\mathcal{Q}}$ such that 
\begin{equation}\label{dTLnorm}
\|a\|_{\dot{f}^s_{\vec{p},q}}:=\Big \| \Big(\sum_{Q\in\mathcal{Q}} \big
(|Q|^{-s/n}|a_Q|\widetilde{\mathds{1}}_Q \big)^q \Big)^{1/q} \Big \|_{\vec{p}}<\infty, 
\end{equation}
where $\widetilde{\mathds{1}}_Q(x)=|Q|^{-1/2} \mathds{1}_Q(x)$, with $\mathds{1}_Q$ denoting the characteristic
function of the cube $Q$.
\end{definition}

The sum over $Q\in \cQ$ should be understood as the unambiguous expression
\begin{equation} \label{eq:dLWP}
  \Big(\sum_{\nu\in\bZ}\sum_{\ell(Q)=2^{-\nu}} \big (\frac{|a_Q|}{|Q|^{s/n}}\widetilde{\mathds{1}}_Q \big)^q \Big)^{1/q}
   = \Big(\sum_{\nu\in\bZ}\big (\sum_{\ell(Q)=2^{-\nu}} \frac{|a_Q|}{|Q|^{s/n}}\widetilde{\mathds{1}}_Q \big)^q\Big)^{1/q}.
\end{equation}
Indeed, the inner sum is just a convenient notation for a piecewise constant function on $\bR^n$,
equal to $|a_Q|2^{\nu(s+n/2)}$ in each $Q$; the identity is due to the disjoint cubes.
Accordingly, for $q=\infty$ the $\ell_q$-norm is replaced by the supremum over $\nu$ only. 
For $q=2$ the quantity \eqref{eq:dLWP} is known as the discrete Littlewood-Paley expression.

At the level of the sequence space, completeness is rather easily obtained:

\begin{lemma} \label{lem:f_complete}
  The sequence space $\dot{f}^{s}_{\vec{p},q}$ is a quasi-Banach space, and for $q<\infty$ the
  sequences of finite support form a dense subspace.
\end{lemma}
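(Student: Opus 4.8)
The plan is to verify the two assertions in Lemma~\ref{lem:f_complete} separately: first completeness of $(\dot f^s_{\vec p,q},\|\cdot\|_{\dot f^s_{\vec p,q}})$, then the density of finitely supported sequences when $q<\infty$. For completeness, I would start from a Cauchy sequence $\{a^{(m)}\}_{m\in\bN}$ in $\dot f^s_{\vec p,q}$ and observe that, for each fixed dyadic cube $Q$, the coordinate functional $a\mapsto a_Q$ is dominated by the quasi-norm: since the inner term in \eqref{dTLnorm} controls $|Q|^{-s/n-1/2}|a_Q|\mathds{1}_Q(x)$ pointwise, integrating the $\vec p$-norm over $Q$ gives $|a_Q|\le C_Q\|a\|_{\dot f^s_{\vec p,q}}$ with a constant depending only on $Q$ (and $s,\vec p,q$). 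Hence $\{a^{(m)}_Q\}_m$ is Cauchy in $\bC$ for every $Q$; call the limit $a_Q$ and set $a=\{a_Q\}_{Q\in\cQ}$.

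The next step is to show $a\in\dot f^s_{\vec p,q}$ and $a^{(m)}\to a$ in the quasi-norm. The clean way is a Fatou-type argument at the level of sequences: for each fixed $\nu\in\bZ$ the inner sum $\sum_{\ell(Q)=2^{-\nu}}|a^{(m)}_Q||Q|^{-s/n}\widetilde{\mathds 1}_Q(x)$ converges pointwise in $x$ to the corresponding sum for $a$ (the sum over cubes of a fixed generation is locally finite, in fact a single term for each $x$ by disjointness). Then, given $m$, apply $\liminf_{m'}$ to $\|a^{(m)}-a^{(m')}\|_{\dot f^s_{\vec p,q}}$ and push the $\liminf$ inside the $\ell_q(\bZ)$-sum and the $n$ iterated integrals via Fatou's lemma, exactly as in the proof of Lemma~\ref{Fatou-lem}, using $(\liminf|x_{m'}|)^t=\liminf|x_{m'}|^t$ for $t>0$; this yields $\|a^{(m)}-a\|_{\dot f^s_{\vec p,q}}\le\liminf_{m'}\|a^{(m)}-a^{(m')}\|_{\dot f^s_{\vec p,q}}$, which tends to $0$ as $m\to\infty$ by the Cauchy property, and in particular $a\in\dot f^s_{\vec p,q}$ since $a=a^{(1)}-(a^{(1)}-a)$. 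The only mild subtlety (and the main obstacle, though a routine one) is bookkeeping with the quasi-triangle inequality: in a quasi-normed space one should phrase the completeness criterion via the standard reduction to a rapidly convergent subsequence, or simply note that the Aoki--Rolewicz renorming makes the space locally bounded and metrisable, so a Cauchy sequence converging along a subsequence converges; either way no genuine difficulty arises.

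For the density statement with $q<\infty$, let $a\in\dot f^s_{\vec p,q}$ and define truncations $a^{(R)}=\mathds 1_{\cQ_R}a$ where $\cQ_R=\{Q_{\nu k}:|\nu|\le R,\ |k|\le R\}$ is finite and $\cQ_R\uparrow\cQ$. Then $|a^{(R)}_Q|\le|a_Q|$ and $a^{(R)}_Q\to a_Q$ pointwise on $\cQ$, so the integrand defining $\|a-a^{(R)}\|_{\dot f^s_{\vec p,q}}$ is dominated by the integrand defining $2^{\text{(quasi-norm const)}}\|a\|_{\dot f^s_{\vec p,q}}$ and tends to $0$ pointwise in $x$ as $R\to\infty$ (here finiteness of $q$ is used so that the $\ell_q(\bZ)$-expression is genuinely continuous under this monotone truncation, and $\infty$-norms would obstruct this). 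Dominated convergence applied successively in the $\ell_q(\bZ)$-sum and in each of the $n$ iterated integrals then gives $\|a-a^{(R)}\|_{\dot f^s_{\vec p,q}}\to 0$. I expect the write-up to be short; the only care needed is to state the dominating function explicitly (namely $(\sum_{\nu}(\sum_{\ell(Q)=2^{-\nu}}|a_Q||Q|^{-s/n}\widetilde{\mathds 1}_Q)^q)^{1/q}\in L_{\vec p}$, which is finite precisely because $a\in\dot f^s_{\vec p,q}$) and to invoke the dominated convergence theorem first for the counting measure on $\bZ$ and then $n$ times for Lebesgue measure, in analogy with the proof of Lemma~\ref{Fatou-lem}.
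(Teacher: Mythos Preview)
Your proposal is correct and follows essentially the same route as the paper: coordinate-wise convergence from the Cauchy condition via the obvious domination $|a_Q|\le C_Q\|a\|_{\dot f^s_{\vec p,q}}$, then an $(n+1)$-fold Fatou argument to get $\|a^{(m)}-a\|\le\liminf_{m'}\|a^{(m)}-a^{(m')}\|$, and for density the truncation by finite exhausting subsets of $\cQ$ combined with repeated dominated convergence. Your remarks about the quasi-triangle inequality and Aoki--Rolewicz are more cautious than the paper (which simply applies Fatou to the $\varepsilon$-inequality directly), but this extra care is harmless and the arguments coincide.
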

\begin{proof}
  Given a Cauchy sequence $a^{(k)}$ of elements in $\dot{f}^{s}_{\vec{p},q}$ there is to
  $\varepsilon>0$ some $K$ such that for $k,m\ge K$,
  \begin{equation}
    \Big \| \Big(\sum_{Q\in\mathcal{Q}} \big
    (|Q|^{-s/n}|a^{(k)}_Q-a^{(m)}_Q|\widetilde{\mathds{1}}_Q  \big)^q
    \Big)^{1/q} \Big \|_{\vec{p}} <\varepsilon.
  \end{equation}
Keeping a single summand indexed by $Q_0$ yields
$|a^{(k)}_{Q_0}-a^{(m)}_{Q_0}|\|\widetilde{\mathds{1}}_{Q_0}\|_{\vec{p}}<\varepsilon$, so 
 it is seen that $a^{(k)}_{Q_0}$ converges to some $a_{Q_0}\in\bC$ for $k\to\infty$. By 
 $(n+1)$-fold application of Fatou's lemma to the limit 
$m\to\infty$ in the above, one finds that $\|a^{(k)}-a\|_{\dot{f}^s_{\vec{p},q}}\le\varepsilon$ for
$k\ge K$; whence $a\in\dot{f}^s_{\vec{p},q}$ and completeness follows. 

When $q<\infty$ and  $a\in\dot{f}^s_{\vec{p},q}$, then any chain of finite subsets
$\mathcal{Q}_1\subset\mathcal{Q}_2\subset\dots$ such that $\mathcal{Q}=\bigcup_N \mathcal{Q}_N$ gives
sequences $a\mathds{1}_{\mathcal{Q_N}}$ of finite support. That
$a\mathds{1}_{\mathcal{Q_N}}-a\to0$ in $\dot{f}^s_{\vec{p},q}$ for $N\to\infty$ follows by repeated
dominated convergence.
\end{proof}

For completeness of the function space $\dot F^s_{\vec p,q}$ the reader is referred to
Corollary~\ref{Fspq-cor}, where this property is carried over from Lemma~\ref{lem:f_complete} as an
easy consequence of the main theorem. 

As a comment on the sequences in the space $\dot f^s_{\vec p,q}$, note that by dropping all terms
but one in the norm we get $|a_Q||Q|^{-s/n-1/2}\| \mathds{1}_Q\|_{\vec p}\le \| a\|_{\dot f^s_{\vec p,q}}=:C_a$. 
Thus we obtain the crude estimate
\begin{equation} \label{aQ-est}
  |a_Q| \le C_a |Q|^{s/n+1/2} \ell(Q)^{-\frac1{p_1}-\dots-\frac1{p_n}} 
        = C_a 2^{-\nu(s+n/2-(\frac1{p_1}+\dots+\frac1{p_n}))}. 
\end{equation} 
For $\nu\to\infty$ this is only a decay estimate in case
$s+n/2>\frac1{p_1}+\dots+\frac1{p_n}$; whereas for $\nu\to-\infty$ it is only for 
$s+n/2<\frac1{p_1}+\dots+\frac1{p_n}$ that the above gives decay.

\section{Proof of the Main Result} \label{TSI-sect}
In this section we derive the main theorem on the boundedness of $S_\varphi$ and $T_\psi$
together with the identity $T_\psi\circ S_\varphi = I$. Namely, we prove the following:

\begin{theorem} \label{th:main}
Let $s\in\mathbb{R}$ with $0<p_1,\dots,p_n<\infty$ and $0<q\leq \infty$.
For any admissible functions $\varphi, \psi$ the $\varphi$-transform $S_\varphi\colon
\dot{F}^{s}_{\vec{p},q}\rightarrow \dot{f}^{s}_{\vec{p},q}$ and the inverse $\varphi$-transform 
$T_\psi\colon \dot{f}^{s}_{\vec{p},q}\rightarrow\dot{F}^{s}_{\vec{p},q}$ are bounded operators. 

Furthermore, when the reconstruction identity \eqref{phi4} is satisfied by $\varphi$ and $\psi$, then
\begin{equation}
  T_\psi\circ S_\varphi f= f \qquad\text{for every $f\in\dot{F}^{s}_{\vec{p},q}$}.
\end{equation}
In particular $\|\cdot\|_{\dot{F}^{s}_{\vec{p},q}}\approx \|S_\varphi(\cdot)\|_{\dot{f}^{s}_{\vec{p},q}}$ 
and $S_\varphi(\dot F^s_{\vec p,q})$ is complemented; cf.\ \eqref{directsum-id}.
\end{theorem}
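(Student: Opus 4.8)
The plan is to establish the three assertions of Theorem~\ref{th:main} in the natural order: first boundedness of $S_\varphi$, then boundedness of $T_\psi$, and finally the composition identity $T_\psi\circ S_\varphi=I$; the concluding statements about equivalence of quasi-norms and complementedness are then bookkeeping. Throughout I would reduce the mixed-norm estimates to the Fefferman--Stein inequality \eqref{M2} and the Peetre maximal inequality \eqref{M3}, which is why the technical Lemma~\ref{lem:p*p} on two-parameter convolution estimates is the workhorse.

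\textbf{Boundedness of $S_\varphi$.} First I would fix an auxiliary admissible $\psi$ with $\sum_\nu\overline{\hat\varphi(2^{-\nu}\xi)}\hat\psi(2^{-\nu}\xi)=1$ (Lemma~\ref{phi4-lem}), so that $\varphi_\nu*f=\sum_{\mu}\varphi_\nu*\tilde\psi_\mu*\psi_\mu*f$ after the reproducing decomposition; only the band-limited nature matters, and the sum is essentially finite in $\mu$ up to rapidly decaying tails controlled by \eqref{eq:p*pM}--\eqref{eq:p*pM'}. The key pointwise step is to dominate $|\varphi_Q*f(x)|$, for $Q=Q_{\nu k}$, by a sum over scales $\mu$ of $2^{-|\nu-\mu|L}$ times a Peetre maximal function of $\psi_\mu*f$ at scale $2^{\min(\nu,\mu)}$, using that $\psi_\mu*f$ is spectrally supported in a ball of radius $\sim2^\mu$ together with \eqref{M3}. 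Summing the geometric factor in $\mu$ (choosing $L$ and the auxiliary $t<\min(p_1,\dots,p_n,q)$ appropriately) and then applying the mixed-norm maximal inequality \eqref{M2} to the resulting $\ell^q(\bZ)$-valued sequence yields $\|S_\varphi f\|_{\dot f^s_{\vec p,q}}\le c\|f\|_{\dot F^s_{\vec p,q}}$; here one uses that $\widetilde{\mathds 1}_{Q_{\nu k}}(x)\sim 2^{\nu n/2}$ on a cube of side $2^{-\nu}$, so the sequence norm \eqref{dTLnorm} matches the discretised Littlewood--Paley expression.

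\textbf{Boundedness of $T_\psi$.} For $a\in\dot f^s_{\vec p,q}$ I would first note $a\in D(T_\psi)$: using the crude bound \eqref{aQ-est} on $|a_Q|$ together with \eqref{eq:p*pM}/\eqref{eq:p*pM'} tested against $\phi\in\cZ$ (as in the proof of Proposition~\ref{TS-prop}) and \eqref{kN-est}, the integrability condition in \eqref{eq:1} holds, so $T_\psi a=\sum_Q a_Q\psi_Q$ makes sense in $\cZ'$ via the Pettis integral. Then one estimates $\|T_\psi a\|_{\dot F^s_{\vec p,q}}$: apply $\varphi_\nu*(\cdot)$ to the series, so $\varphi_\nu*(T_\psi a)(x)=\sum_\mu\sum_{\ell(Q)=2^{-\mu}}a_Q\,\varphi_\nu*\psi_Q(x)$, dominate $|\varphi_\nu*\psi_{Q_{\mu k}}(x)|$ via Lemma~\ref{lem:p*p} by $2^{-|\nu-\mu|L}$ times $(1+2^{\min(\nu,\mu)}|x-x_{Q_{\mu k}}|)^{-N}$ (the moment conditions of the admissible $\varphi,\psi$ give the decay in both directions of $|\nu-\mu|$), and then sum over $k$ against the "discrete Peetre maximal function" of the piecewise-constant sequence. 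The crucial technical point is that summing $\sum_k |a_{Q_{\mu k}}|(1+2^\mu|x-x_{Q_{\mu k}}|)^{-N}$ is, up to constants, bounded by the maximal function $(M_n\cdots M_1 |g_\mu|^t)^{1/t}(x)$ of the piecewise constant function $g_\mu=\sum_k a_{Q_{\mu k}}\mathds 1_{Q_{\mu k}}$ — this is exactly the content of Proposition~\ref{maximal-prop}, i.e.\ Peetre's maximal inequality extended to lattice-constant functions. After that, summing the geometric factor in $|\nu-\mu|$ and invoking \eqref{M2} once more delivers $\|T_\psi a\|_{\dot F^s_{\vec p,q}}\le c\|a\|_{\dot f^s_{\vec p,q}}$; one should be slightly careful for $q=\infty$, handling the supremum by the same geometric summation with a small $\varepsilon$-loss absorbed into $L$.

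\textbf{The identity and the consequences.} With both operators bounded, and since \eqref{phi4} holds, Proposition~\ref{l:identity} already gives $T_\psi(S_\varphi f)=f$ in $\cZ'$ for every $f\in\cS'/\cP$, hence in particular for $f\in\dot F^s_{\vec p,q}$; no new work is needed here beyond observing that $\dot F^s_{\vec p,q}\hookrightarrow\cZ'$ by \eqref{eq:ZZ'}, so the equality of distributions is meaningful and the reconstruction holds in the norm of $\dot F^s_{\vec p,q}$ by the just-proved boundedness. The norm equivalence $\|f\|_{\dot F^s_{\vec p,q}}\approx\|S_\varphi f\|_{\dot f^s_{\vec p,q}}$ is then immediate: $\le$ from boundedness of $S_\varphi$, and $\ge$ from $\|f\|=\|T_\psi(S_\varphi f)\|\le c\|S_\varphi f\|$. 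Finally, $P:=S_\varphi\circ T_\psi$ is a bounded idempotent on $\dot f^s_{\vec p,q}$ (idempotent because $T_\psi S_\varphi=I$, so $P^2=S_\varphi T_\psi S_\varphi T_\psi=S_\varphi T_\psi=P$), with range $S_\varphi(\dot F^s_{\vec p,q})$; hence $\dot f^s_{\vec p,q}=R(P)\oplus\ker P$ as a topological direct sum, which is the asserted complementation.

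\textbf{Main obstacle.} The genuinely delicate step is the $T_\psi$ bound, specifically passing from the pointwise sum $\sum_k|a_{Q_{\mu k}}|(1+2^\mu|x-x_{Q_{\mu k}}|)^{-N}$ over one dyadic scale to a Fefferman--Stein-controllable maximal function uniformly in $\mu$; getting the exponents to line up (the interplay of $s$, the decay parameter $N$, the moment order, and the chosen $t<\min(p_1,\dots,p_n,q)$) so that the geometric series in $|\nu-\mu|$ converges while \eqref{M2} still applies is where all the mixed-norm bookkeeping concentrates, and Proposition~\ref{maximal-prop} is precisely what makes this step legitimate for the piecewise-constant discretisation.
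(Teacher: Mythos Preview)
Your $T_\psi$ argument is broadly right, but the $S_\varphi$ section has a real confusion. You write $\varphi_\nu*f=\sum_\mu\varphi_\nu*\tilde\psi_\mu*\psi_\mu*f$ and then propose to ``dominate $|\varphi_Q*f(x)|$'', but neither object is what enters $S_\varphi$: one has $(S_\varphi f)_Q=|Q|^{1/2}\,\tilde\varphi_\nu*f(x_Q)$, which involves the \emph{reflected} function $\tilde\varphi$, not $\varphi$; and the decomposition $f=\sum_\mu\tilde\psi_\mu*\psi_\mu*f$ is not what Lemma~\ref{phi4-lem} delivers (it gives $\sum_\mu\overline{\hat\varphi_\mu}\hat\psi_\mu=1$, i.e.\ $f=\sum_\mu\tilde\varphi_\mu*\psi_\mu*f$). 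One can repair this by decomposing $\tilde\varphi_\nu*f$ through the conjugate identity and then estimating the kernel $\tilde\varphi_\nu*\tilde\psi_\mu$, but that is extra work your sketch does not carry out.

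The paper sidesteps this by reversing your order: it proves $T_\psi$-boundedness \emph{first} and then treats $S_\varphi$ in two steps. Step one is a very short direct estimate: since $(S_\varphi f)_Q=|Q|^{1/2}\tilde\varphi_\nu*f(x_Q)$ and $\tilde\varphi_\nu*f$ is band-limited, Peetre's inequality \eqref{M3} alone (no reproducing formula at all) gives $\|S_\varphi f\|_{\dot f^s_{\vec p,q}}\le c\|f\|_{\dot F^s_{\vec p,q}(\tilde\varphi)}$---but with $\tilde\varphi$ on the right. Step two is a bootstrap: for any two admissible $\varphi,\Phi$ pick a partner $\Psi$ for $\Phi$ in \eqref{phi4} and write
\[
\|f\|_{\dot F^s_{\vec p,q}(\varphi)}=\|T_\Psi S_\Phi f\|_{\dot F^s_{\vec p,q}(\varphi)}\le c\|S_\Phi f\|_{\dot f^s_{\vec p,q}}\le c\|f\|_{\dot F^s_{\vec p,q}(\tilde\Phi)},
\]
using the already-established $T_\Psi$-boundedness and step one. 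Iterating yields norm-independence of $\dot F^s_{\vec p,q}$ and in particular removes the awkward $\tilde\varphi$. So the order of the two boundedness proofs is not arbitrary.

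Two smaller points on $T_\psi$. First, since $\varphi,\psi$ are both admissible with Fourier supports in the \emph{same} annulus, $\varphi_\nu*\psi_Q\equiv 0$ whenever $|\nu-\mu|>h:=\log_2(K^0/K_0)$; the sum over $\mu$ is therefore finite and no moment-condition decay in $|\nu-\mu|$ is required. Second, you invoke Proposition~\ref{maximal-prop} for the bound $\sum_k|a_{Q_{\mu k}}|(1+2^\mu|x-x_{Q_{\mu k}}|)^{-N}\lesssim(M_n\cdots M_1|g_\mu|^t)^{1/t}(x)$, but that proposition controls a \emph{supremum}, not a sum; the paper isolates the needed sum estimate as the separate Lemma~\ref{l:star} (from which Proposition~\ref{maximal-prop} is in fact derived). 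Finally, the paper extends the $T_\psi$ bound from finitely supported sequences to all of $\dot f^s_{\vec p,q}$ via the Fatou property (Lemma~\ref{Fatou-lem}) combined with the $\operatorname{w}^*$-approximation in Theorem~\ref{Tpsi-thm}; your direct route through the Pettis definition is also viable but you should make the interchange $\varphi_\nu*(T_\psi a)=\sum_Q a_Q\,\varphi_\nu*\psi_Q$ explicit. Your treatment of the identity and the complementation is correct.
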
 

To avoid an excess of concurrent estimates, and to crystallise results of independent interest, we will split the proof of Theorem \ref{th:main} into a number of steps.

\subsection{The synthesis operator $T_\psi$}
First we show that the crude estimates \eqref{aQ-est} suffice for the following basic result 
which extends Proposition~\ref{TS-prop} on $D(T_\psi)$ to the full sequence space:

\begin{proposition} \label{fspq-prop}
  For any admissible $\psi\in\cS$ the synthesis operator $T_\psi$ from
Theorem~\ref{Tpsi-thm} is defined on $\dot f^s_{\vec p,q}$ for any
$s\in\bR$, $p_1,\dots,p_n\in\,]0,\infty[\,$ and $0<q\le\infty$.
\end{proposition}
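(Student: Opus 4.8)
Since the statement asserts that $T_\psi$ is \emph{defined} on $\dot f^s_{\vec p,q}$, i.e.\ that $\dot f^s_{\vec p,q}\subset D(T_\psi)$, and since by Theorem~\ref{Tpsi-thm} the domain $D(T_\psi)$ equals $\bigcap_{\phi\in\cZ}L_1(\cQ,\langle\psi_Q,\phi\rangle\,d\tau_{1+n})$, the plan is simply to fix an arbitrary $a=\{a_Q\}\in\dot f^s_{\vec p,q}$ and an arbitrary $\phi\in\cZ$ and verify that $\sum_{Q\in\cQ}|a_Q|\,|\langle\psi_Q,\phi\rangle|<\infty$. As all terms are nonnegative, Tonelli's theorem then permits summing first over the lattice points $k\in\bZ^n$ of a fixed dyadic generation and afterwards over the generations $\mu\in\bZ$. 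In effect this is the argument of Proposition~\ref{TS-prop}, but with the elementary a priori bound \eqref{aQ-est} on the coefficients replacing the temperateness estimate for $\langle f,\varphi_Q\rangle$.

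For the coefficients I would use \eqref{aQ-est} directly: writing $Q=Q_{\mu k}$ and $\sigma:=s+n/2-(1/p_1+\dots+1/p_n)$ one has $|a_Q|\le C_a\,2^{-\mu\sigma}$ with $C_a=\|a\|_{\dot f^s_{\vec p,q}}$ (only finiteness of $C_a$ is needed here). For the pairing, exactly as in the proof of Proposition~\ref{TS-prop}, one writes $\langle\psi_Q,\phi\rangle=\psi_Q*\tilde\phi(0)$; since $\psi_Q=2^{n\mu/2}\psi(2^{\mu}(\cdot-x_Q))$, Lemma~\ref{lem:p*p} applies with $\nu=0$ and $x=0$, where $2^{\mu}|x-x_Q|=|k|$. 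Because $\psi$ is admissible, \eqref{eq:p*pM} holds for every $M$ and efficiently controls the range $\mu\ge0$; because $\tilde\phi$ again lies in $\cZ$ and hence satisfies every moment condition, \eqref{eq:p*pM'} holds for every $M$ and is the efficient choice for $\mu<0$. Altogether, for any $N>n$,
\begin{equation*}
  |\langle\psi_{Q_{\mu k}},\phi\rangle|\le C\,(1+|k|)^{-N}\,2^{n\mu/2}\cdot
  \begin{cases}\,2^{(N-n-M-1)\mu}, & \mu\ge0,\\[1\jot] \,2^{(M+1)\mu}, & \mu<0,\end{cases}
\end{equation*}
with $C$ depending only on $N$, $M$, $\psi$ and, through a Schwartz seminorm, on $\phi$.

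Multiplying by $|a_Q|\le C_a2^{-\mu\sigma}$, fixing $N=n+1$ so that $\sum_{k\in\bZ^n}(1+|k|)^{-N}<\infty$ by \eqref{kN-est}, and carrying out the $k$-summation, one is left with two geometric series in $\mu$: one over $\mu\ge0$ whose ratio has exponent $n/2-\sigma+N-n-M-1$, and one over $\mu<0$ whose ratio has exponent $n/2-\sigma+M+1$. The former is summable once that exponent is negative and the latter once it is positive, and since $n/2-\sigma=1/p_1+\dots+1/p_n-s$, both requirements are met as soon as
\begin{equation*}
  M>\max\Big(1/p_1+\dots+1/p_n-s,\ \ s-1/p_1-\dots-1/p_n-1\Big).
\end{equation*}
Such an $M$ is available precisely because $\psi$ and $\tilde\phi$ admit arbitrarily many vanishing moments. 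Hence $\sum_{Q\in\cQ}|a_Q|\,|\langle\psi_Q,\phi\rangle|<\infty$ for every $\phi\in\cZ$, so $a\in D(T_\psi)$, which is the asserted inclusion.

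I do not anticipate a genuine obstacle here; the one point that must not be overlooked is that as $\mu\to-\infty$ the estimate \eqref{eq:p*pM} alone yields no decay in $\mu$, so one truly needs the vanishing moments of $\tilde\phi$ via \eqref{eq:p*pM'} to absorb the (possibly growing) factor $2^{-\mu\sigma}2^{n\mu/2}$ — symmetrically to exploiting the vanishing moments of the admissible $\psi$ as $\mu\to+\infty$. Everything else is the routine combination of Lemma~\ref{lem:p*p} with the elementary lattice sum \eqref{kN-est}.
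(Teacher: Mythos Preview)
Your proof is correct and follows essentially the same route as the paper: bound $|a_Q|$ via \eqref{aQ-est}, write $\langle\psi_Q,\phi\rangle=\psi_Q*\tilde\phi(0)$, apply Lemma~\ref{lem:p*p} at $\nu=0$, $x=0$ together with \eqref{kN-est}, and use the vanishing moments of $\psi$ for $\mu\ge0$ and of $\tilde\phi\in\cZ$ for $\mu<0$. The only cosmetic difference is that the paper first tries the basic estimate \eqref{eq:p*p} (which already suffices when $s-\sum 1/p_j$ has the favourable sign) before invoking \eqref{eq:p*pM} or \eqref{eq:p*pM'}, whereas you go straight to the moment-condition estimates and choose $M$ large; your version is slightly more streamlined.
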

\begin{proof}
  To verify the integrability condition in Theorem~\ref{Tpsi-thm} for a given  $a\in \dot f^s_{\vec p,q}$, we shall show that the 
  series $\sum_Q|a_Q\langle \psi_Q,\phi\rangle|$ is finite for arbitrary $\phi\in\cZ$. 
   We invoke \eqref{aQ-est} to make a comparison with $S_++S_-$, whereby 
  \begin{equation}
    S_\pm = \sum_{\mu\gtrless 0}\sum_{k\in\bZ^n} |\langle \psi_Q,\phi\rangle| C_a 2^{-\mu(s+n/2-(\frac1{p_1}+\dots+\frac1{p_n}))}.
  \end{equation}
Here we use that $\langle \psi_Q,\phi\rangle=\psi_Q*\tilde\phi(0)$, where the normalisation by
$|Q|^{-1/2}$ in $\psi_Q$ is cancelled by the factor $2^{-n\mu/2}=|Q|^{1/2}$ above. 
Now Lemma~\ref{lem:p*p} applies for $\nu=0$, and for $\mu\ge0$ the first estimate \eqref{eq:p*p} gives 
the comparison 
  \begin{equation} \label{S+est}
   S_+\le \sum_{\mu\ge 0}\sum_{k\in\bZ^n} C_a 2^{-\mu(s-(\frac1{p_1}+\dots+\frac1{p_n}))}C_N2^{(N-n)\mu}(1+2^\mu|x_Q|)^{-N}.
  \end{equation}
Since $2^\mu x_Q=k$ in this sum, we use \eqref{kN-est} for $N>n$.
So in case $s>\frac1{p_1}+\dots+\frac1{p_n}$ we obtain for sufficiently small
$N>n$ that $S_+<\infty$.

For $s\le \frac1{p_1}+\dots+\frac1{p_n}$ there is a reinforcement in terms of the
estimate \eqref{eq:p*pM}, which we may apply as $\psi$ is admissible, hence fulfils the moment
condition of any order $M\in\bN_0$. Therefore one can replace $C_N2^{(N-n)\mu}$ in \eqref{S+est} by
$C'_N2^{(N-n-M)\mu}$ for $M$ so large that $s-\frac1{p_1}-\dots-\frac1{p_n}-(N-n)+M>0$, 
implying that $S_+$ is finite.

For $S_-$ the argument leading to inequality \eqref{S+est} gives a slightly simpler estimate, since
\eqref{eq:p*p} now applies for $\mu<0=\nu$; namely
  \begin{equation} \label{S-est}
   S_-\le \sum_{\mu=-\infty}^{-1} C_a 2^{-\mu(s-(\frac1{p_1}+\dots+\frac1{p_n}))} C_N\sum_{k\in\bZ^n}(1+|k|)^{-N}.
  \end{equation}
Clearly the right-hand side is finite for $s-\frac1{p_1}-\dots-\frac1{p_n}<0$ and $N>n$; cf.~\eqref{kN-est}. 

In the complementing region with $s\ge \frac1{p_1}+\dots+\frac1{p_n}$ we note that $\phi$ as a member
of $\cZ$ also fulfils the moment condition of any order $M\in\bN_0$, so \eqref{eq:p*pM'}
allows us to replace $C_N$
in \eqref{S-est} by $C''_N2^{M\mu}$ for some $N>n$, $M\in\bN$ so large that $s-\frac1{p_1}-\dots-\frac1{p_n}-M<0$.
Whence $S_-<\infty$ also in this case.

Altogether this shows that every $a\in\dot f^s_{\vec p,q}$ belongs to 
$\bigcap_{\phi\in\cZ}L_1(\cQ,\langle\psi_Q,\phi\rangle d\tau_{1+n})$, which by \eqref{eq:1} is the domain of $T_\psi$.
\end{proof}

To obtain a more refined estimate, we will need the following estimate, closely related to the 
Peetre-type maximal inequality \eqref{M3}; cf.\ Appendix~\ref{maximal_lemma} for a proof.

\begin{lemma} \label{l:star} Let $0<t\le1$, $\tau>n/t$ and $\mu\in\mathbb{Z}$. For any sequence $(a_P)_{P\in\cQ}$ we have 
\begin{equation}
  \sum_{\ell(P)=2^{-\mu}}\frac{|a_P|}{\big(1+2^{\min(\mu,\nu)}|x_P-x_Q|\big)^{\tau}}
 \le c 2^{\frac{n}{t}(\mu-\nu)_+}\big(M_n\cdots M_1(\sum_{\ell(P)=2^{-\mu}}|a_P|\mathds{1}_P)^t\cdots\big)^{\frac1t}(x)
\end{equation}
whenever $x\in Q $ with $\ell(Q)=2^{-\nu}$ for some $\nu\in\mathbb{Z}$.
\end{lemma}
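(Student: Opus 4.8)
The plan is to bound the left-hand side by an iterated Hardy--Littlewood maximal function through a dyadic decomposition of the cubes $P$ into annuli centred at $x$, exploiting that $0<t\le1$ via the elementary subadditivity $\big(\sum b_i\big)^t\le\sum b_i^t$ for non-negative reals.

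First I would write $g=\sum_{\ell(P)=2^{-\mu}}|a_P|\mathds{1}_P$; since the dyadic cubes of a fixed generation are disjoint, $g(y)^t=\sum_{\ell(P)=2^{-\mu}}|a_P|^t\mathds{1}_P(y)$, and only finitely many such $P$ meet any bounded set, so $g^t$ is locally integrable (and if $M_n\cdots M_1(g^t)(x)=\infty$ the claim is trivial). Put $\lambda=\min(\mu,\nu)$ and note the identity $\mu-\lambda=(\mu-\nu)_+$. Since $x\in Q$ with $\ell(Q)=2^{-\nu}$ and $\lambda\le\nu$, we have $2^{\lambda}|x-x_Q|\le 2^{\nu}|x-x_Q|\le\sqrt n$, so $1+2^{\lambda}|x_P-x_Q|\approx1+2^{\lambda}|x_P-x|$ uniformly in $P$; it thus suffices to bound $\sum_{\ell(P)=2^{-\mu}}|a_P|\,(1+2^{\lambda}|x_P-x|)^{-\tau}$.

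Next I would group the cubes into the annuli $A_j=\{P:\ell(P)=2^{-\mu},\ 2^{j}\le1+2^{\lambda}|x_P-x|<2^{j+1}\}$, $j\ge0$, which partition all $P$ at generation $\mu$. On $A_j$ the weight is $\le2^{-j\tau}$, and subadditivity (here is where $t\le1$ enters) gives $\sum_{P\in A_j}|a_P|\le\big(\sum_{P\in A_j}|a_P|^t\big)^{1/t}$. Writing $|a_P|^t=|P|^{-1}\int_P g^t$ and using disjointness, $\sum_{P\in A_j}|a_P|^t=|P|^{-1}\int_{U_j}g^t$ where $U_j=\bigcup_{P\in A_j}P$. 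A short geometric estimate --- for $y\in P\in A_j$ one has $|y-x|\le|y-x_P|+|x_P-x|\le\sqrt n\,2^{-\mu}+2^{j+1-\lambda}\le c\,2^{j-\lambda}$ because $\mu\ge\lambda$ and $j\ge0$ --- shows $U_j$ is contained in a cube $R_j$ centred at $x$ of side $\le c\,2^{j-\lambda}$. Since $x\in R_j$, \eqref{M1} gives $\int_{U_j}g^t\le|R_j|\,M_n\cdots M_1(g^t)(x)\le c\,2^{(j-\lambda)n}M_n\cdots M_1(g^t)(x)$; dividing by $|P|=2^{-\mu n}$ and using $\mu-\lambda=(\mu-\nu)_+$ yields $\sum_{P\in A_j}|a_P|^t\le c\,2^{(j+(\mu-\nu)_+)n}M_n\cdots M_1(g^t)(x)$.

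Finally I would take $t$-th roots and sum the geometric series in $j$:
\begin{equation*}
\sum_{\ell(P)=2^{-\mu}}\frac{|a_P|}{(1+2^{\lambda}|x_P-x|)^{\tau}}
\le\sum_{j\ge0}2^{-j\tau}\Big(\sum_{P\in A_j}|a_P|^t\Big)^{1/t}
\le c\,2^{\frac nt(\mu-\nu)_+}\big(M_n\cdots M_1(g^t)(x)\big)^{1/t}\sum_{j\ge0}2^{j(\frac nt-\tau)},
\end{equation*}
and the last sum converges precisely because $\tau>n/t$, which is the hypothesis. The computation is otherwise routine; the points demanding a little care are the covering geometry --- one must verify that the cube $R_j$ both contains $U_j$ and has $x$ as an interior point so that \eqref{M1} applies --- and the bookkeeping of scales $\mu-\min(\mu,\nu)=(\mu-\nu)_+$, which is exactly what produces the prefactor $2^{\frac nt(\mu-\nu)_+}$, together with the harmless replacement of $x_Q$ by $x$ in the weight.
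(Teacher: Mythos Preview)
Your proof is correct and follows essentially the same route as the paper's: a dyadic annular decomposition of the cubes $P$, containment of each annulus in a cube $R_j$ with $x\in R_j$, the bound \eqref{M1} for the iterated maximal function, and summation of the geometric series via $\tau>n/t$. The only organizational difference is that the paper first reduces to $t=1$ by replacing $|a_P|$ with $|a_P|^t$ (invoking $\|\cdot\|_{\ell_1}\le\|\cdot\|_{\ell_t}$) and separately reduces to $\nu\le\mu$, whereas you handle both scale relations at once through $\lambda=\min(\mu,\nu)$ and apply the subadditivity $\bigl(\sum b_i\bigr)^t\le\sum b_i^t$ at the annulus level; the substance is the same.
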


We note a few consequences of Lemma~\ref{l:star} for later use.
They concern the transformation of a given sequence $a$ into $a^*=\{a^*_Q\}$ given by
\begin{equation}\label{s1}
  a^*_Q=\sum_{\ell(P)=2^{-\nu}}|a_P|(1+2^{\nu}|x_P-x_Q|)^{-\tau},
\end{equation}
for every $Q\in\mathcal{Q}$ with $\ell(Q)=2^{-\nu}$, $\nu\in\mathbb{Z}$, and some $\tau>n/t$, for
$0<t\le1$. 
Here Lemma~\ref{l:star} applied for $\mu=\nu$ gives, for every $x\in Q$,
\begin{equation}\label{s2}
  a^*_Q\le c\Big(M_n\cdots M_1\big(\sum_{\ell(P)=\ell(Q)}|a_P|\mathds{1}_P\big)^{t}\cdots\Big)^{1/t}(x).
\end{equation}
Since the set of all dyadic cubes with the same side-length is a disjoint partition of
$\mathbb{R}^n,$ it is clear from (\ref{s2}) that on $\bR^n$,
\begin{equation}\label{s3}
  \sum_{\ell(Q)=2^{-\nu}}a^{*}_{Q}\mathds{1}_Q(x)
  \le c\Big(M_n\cdots M_1\big(\sum_{\ell(P)=2^{-\nu}}|a_P|\mathds{1}_P\big)^{t}\cdots\Big)^{1/t}(x).
\end{equation}

We can now prove the following refinement  of Proposition~\ref{fspq-prop}: 

\begin{proposition} \label{Tpsi-prop}
  When  $\psi$ is admissible, then the synthesis operator $T_\psi$ from Theorem~\ref{Tpsi-thm} is a
  bounded linear map $\dot f^s_{\vec p,q}\to \dot F^s_{\vec p,q}$.
\end{proposition}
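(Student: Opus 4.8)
The plan is to estimate $\|T_\psi a\|_{\dot F^s_{\vec p,q}}$ for $a\in\dot f^s_{\vec p,q}$ by first writing $T_\psi a$ as the distribution described in Proposition~\ref{fspq-prop}, which is legitimate since that proposition shows $\dot f^s_{\vec p,q}\subset D(T_\psi)$, and then computing $\varphi_\nu*(T_\psi a)$ for the defining $\varphi$ of the $\dot F^s_{\vec p,q}$-norm. Grouping the cubes $P$ according to their generation $\ell(P)=2^{-\mu}$, one has $\varphi_\nu*(T_\psi a)=\sum_{\mu\in\bZ}\sum_{\ell(P)=2^{-\mu}}a_P\,\varphi_\nu*\psi_P$, and since $\psi_P(x)=|P|^{1/2}\psi_\mu(x-x_P)=2^{-n\mu/2}\psi(2^\mu(\cdot-x_P))$, the kernel $\varphi_\nu*\psi_P(x)$ is controlled by Lemma~\ref{lem:p*p}: for $x\in Q$ with $\ell(Q)=2^{-\nu}$ one gets, because $\psi$ (and $\varphi$) is admissible and hence has all moments vanishing,
\begin{equation*}
  |\varphi_\nu*\psi_P(x)|\le C_{N,M}\,2^{-n\mu/2}\,2^{(n-N)(\mu-\nu)_+\cdot(-1)}\cdots
\end{equation*}
i.e.\ an estimate with a factor $2^{-n\mu/2}$ times a rapidly decaying power of $1+2^{\min(\mu,\nu)}|x-x_P|$ and a gain $2^{-(M+1)|\mu-\nu|}$ with $M$ as large as desired on the ``bad'' side.

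The key steps, in order, are as follows. First I would fix $0<t\le 1$ with $t<\min(p_1,\dots,p_n,q)$ and $\tau>n/t$, and combine the Lemma~\ref{lem:p*p} kernel bound with the maximal-function estimate of Lemma~\ref{l:star} (in the form \eqref{s2}--\eqref{s3}) to obtain, for $x\in Q$ with $\ell(Q)=2^{-\nu}$,
\begin{equation*}
  2^{\nu s}|\varphi_\nu*(T_\psi a)(x)|
   \le c\sum_{\mu\in\bZ} 2^{-(s+n/2)(\mu-\nu)}2^{-\delta|\mu-\nu|}
       \Big(M_n\cdots M_1\big(\sum_{\ell(P)=2^{-\mu}}(|P|^{-s/n}|a_P|\widetilde{\mathds 1}_P)^t\big)\Big)^{1/t}(x),
\end{equation*}
where $\delta>0$ can be chosen arbitrarily large by taking $M$ large (absorbing both $2^{-(s+n/2)(\mu-\nu)}$ and any polynomial losses). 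The normalisation bookkeeping here is exactly the identity $|P|^{-s/n}|a_P|\widetilde{\mathds 1}_P = 2^{\mu(s+n/2)}|a_P|\mathds 1_P$ already used in \eqref{eq:dLWP}. Second, I would take the $\ell_q(\bZ,d\nu)$-norm over $\nu$: since the coefficients $2^{-\delta|\mu-\nu|}$ are summable with a $\mu$-independent bound, a discrete convolution (Young's inequality on $\bZ$, or a crude application of Minkowski when $q\ge 1$ and Jensen-type bound when $q<1$) yields
\begin{equation*}
  \Big(\sum_\nu 2^{\nu sq}|\varphi_\nu*(T_\psi a)(x)|^q\Big)^{1/q}
  \le c\Big(\sum_\mu \Big[\big(M_n\cdots M_1(\textstyle\sum_{\ell(P)=2^{-\mu}}(|P|^{-s/n}|a_P|\widetilde{\mathds 1}_P)^t\big)\big)^{1/t}\Big]^q\Big)^{1/q}.
\end{equation*}
Third, I would apply the mixed-norm Fefferman--Stein inequality \eqref{M2} (with exponent $t$), which removes the maximal operators at the cost of a constant, and arrive at $\|T_\psi a\|_{\dot F^s_{\vec p,q}}\le c\|a\|_{\dot f^s_{\vec p,q}}$; linearity of $T_\psi$ is already known from Theorem~\ref{Tpsi-thm}.

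The main obstacle I anticipate is the careful handling of the two regimes $\mu\ge\nu$ and $\mu<\nu$ in the kernel estimate so that in \emph{both} one genuinely obtains geometric decay $2^{-\delta|\mu-\nu|}$ after multiplying by $2^{\nu s}$ and extracting $|P|^{-s/n}$: on the side $\mu>\nu$ (fine cubes contributing to a coarse frequency block) one needs the moment condition on $\psi$ via \eqref{eq:p*pM}, while on the side $\mu<\nu$ one needs the moment condition on $\varphi$ via \eqref{eq:p*pM'} — both are available since $\varphi,\psi$ are admissible, but one must check that the exponents can be made to have the right sign simultaneously for \emph{all} real $s$, which is where ``$M$ arbitrarily large'' is used. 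A secondary technical point is the exchange of the $\nu$-sum and the $P$-sums and the passage from the weak (Pettis) definition of $T_\psi a$ to the pointwise convolution $\varphi_\nu*(T_\psi a)$; this is justified because $\varphi_\nu$ is (a translate of) a test function in $\cZ$ and the estimates above show the double series converges absolutely, so Proposition~\ref{fspq-prop} (together with \eqref{eq:4} and the band-limitedness making $\varphi_\nu*\psi_P$ a legitimate pairing) applies. The sub-additivity manipulations for $q<1$ and $t<1$ are routine and I would not belabour them.
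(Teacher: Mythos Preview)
Your approach is correct in outline, but it differs from the paper's in two places, and in both the paper's route is simpler in the present band-limited setting.

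First, you treat the $\mu$-sum by invoking the moment-improved estimates \eqref{eq:p*pM}--\eqref{eq:p*pM'} to manufacture a factor $2^{-\delta|\mu-\nu|}$ with $\delta$ large, and then apply a discrete Young-type argument. The paper instead uses that both $\hat\varphi$ and $\hat\psi$ are supported in annuli, so by \eqref{phi5} one has $\varphi_\nu*\psi_J=0$ unless $|\mu-\nu|\le h:=\log_2(K^0/K_0)$. Hence for each $\nu$ only \emph{finitely many} generations $\mu$ contribute, the crude estimate \eqref{eq:p*p} already suffices, and no geometric-decay/convolution argument in $|\mu-\nu|$ is needed at all. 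Your route would be the right one for molecules or other building blocks without compact spectral support; here it is avoidable extra work.

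Second, you work directly with a general $a\in\dot f^s_{\vec p,q}$ and justify the pointwise identity $\varphi_\nu*(T_\psi a)(x)=\sum_{\mu}\sum_{\ell(P)=2^{-\mu}}a_P\,\varphi_\nu*\psi_P(x)$ via the Pettis definition and absolute convergence. The paper instead proves the norm inequality only for sequences of \emph{finite support} (where \eqref{p1} is unproblematic), and then passes to arbitrary $a$ by combining Proposition~\ref{fspq-prop} (so $T_\psi a$ is defined), the $\operatorname{w}^*$-approximation property in Theorem~\ref{Tpsi-thm} (so $T_\psi a^{(m)}\to T_\psi a$ in $\cZ'$ for truncations $a^{(m)}$), and the Fatou property of Lemma~\ref{Fatou-lem} (so the uniform bound $\|T_\psi a^{(m)}\|_{\dot F^s_{\vec p,q}}\le c\|a\|_{\dot f^s_{\vec p,q}}$ survives in the limit). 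This two-step scheme sidesteps the pointwise-convergence bookkeeping and also handles $q=\infty$ without any density considerations. A minor slip in your normalisations: $\psi_P(x)=|P|^{1/2}\psi_\mu(x-x_P)=2^{n\mu/2}\psi(2^\mu(\cdot-x_P))$, not $2^{-n\mu/2}$; this does not affect the structure of your argument.
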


\begin{proof} 
Let $a=\{a_Q\}$ be a sequence in $\dot{f}^{s}_{\vec{p},q}$ of finite support and
$f:=T_{\psi}a=\sum_{Q}a_Q\psi_Q$. 
By the finiteness and \eqref{phi5}, we obtain with $h=\log_2(K^0/K_0)$ 
\begin{equation}\label{p1}
  \varphi_{\nu}*f(x)=\sum_{J}a_J\varphi_{\nu}*\psi_J=\sum_{\nu-h\le\mu\le\nu+h}\sum_{\ell(J)=2^{-\mu}}a_{J}\varphi_{\nu}*\psi_{J}(x)
\end{equation}
for any $x\in\bR^n$, $\nu\in\mathbb{Z}$.
Using the basic estimate \eqref{eq:p*p} from Lemma~\ref{lem:p*p}, and the support relation
\eqref{phi6}, we see that for $\tau>0$
\begin{equation}
\label{p2}
  |\varphi_{\nu}*\psi_{J}(x)|\le c|J|^{-1/2}(1+2^{\mu}|x-x_{J}|)^{-\tau}.
\end{equation}
Now we fix $\tau$ so large that $\tau>n/t$ and $0<t<\min(1,p_1,\dots,p_n,q)$. Hence
\begin{equation}\label{p3}
  |Q|^{-s/n}|\varphi_{\nu}*f(x)|\le c\sum_{|\mu-\nu|\le h}\sum_{\ell(J)=2^{-\mu}}|a_{J}||J|^{-s/n-1/2}(1+2^{\mu}|x-x_{J}|)^{-\tau}.
\end{equation}

For every  $x\in\mathbb{R}^n$ and $\mu\in\mathbb{Z},$ there exists a unique $P_0\in\mathcal{Q}$
such that $x\in P_0$ and $\ell(P_0)=2^{-\mu}.$ Then it holds that 
$1+2^{\mu}|x_{P_0}-x_J|\le1+2^{\mu}(|x-x_{P_0}|+|x-x_J|)
\le1+\sqrt{n}+2^{\mu}|x-x_J|\le c_n(1+2^{\mu}|x-x_J|)$ and thus from \eqref{p3}
\begin{equation}  \label{p4}
  |Q|^{-s/n}|\varphi_{\nu}*f(x)|\le c\sum_{|\mu-\nu|\le h}\sum_{\ell(J)=2^{-\mu}}|a_{J}||J|^{-s/n-1/2}(1+2^{\mu}|x_{P_0}-x_{J}|)^{-\tau}.
\end{equation}
For simplicity we introduce $b=\{b_J\}$ with $b_J=a_J|J|^{-s/n-1/2}$ for every
$J\in\mathcal{Q}$,  and then using \eqref{p4}, \eqref{s1} and \eqref{s3} we derive
\begin{equation}
  \begin{split}
  2^{\nu s}|\varphi_{\nu}*f(x)|&\le c\sum_{|\mu-\nu|\le h}b^*_{P_0}\mathds{1}_{P_0}(x)
  \le c\sum_{\mu=\nu-1}^{\nu+1}\sum_{\ell(P)=2^{-\mu}}b^*_P\mathds{1}_P(x)
\\
  &\le c\sum_{|\mu-\nu|\le  h}\Big(M_n\cdots M_1\big(\sum_{\ell(P)=2^{-\mu}}|b_P|\mathds{1}_P\big)^{t}\cdots\Big)^{1/t}(x).
  \end{split}
\label{p5}    
\end{equation}
Applying \eqref{p5} in the $\dot F^s_{\vec p,q}$-norm of $f$, the quasi-triangle inequality gives
\begin{equation}
  \begin{split}
  \|f\|_{F_{\vec{p},q}^s}&=\Big\|\Big(\sum_{\nu\in\mathbb{Z}} \Big(2^{\nu s}|\varphi_{\nu}* f(\cdot)|\Big)^q\Big)^{\frac1q}\Big\|_{\vec{p}}
\\
  &\le c\Big\|\Big(\sum_{\nu\in\mathbb{Z}} \Big(\sum_{|\mu-\nu|\le h}\Big(M_n\cdots
  M_1\big(\sum_{\ell(P)=2^{-\mu}}|b_P|\mathds{1}_P\big)^{t}\cdots\Big)^{\frac1t}\Big)^q\Big)^{\frac1q}\Big\|_{\vec{p}} 
\\
  &\le c(1+2h)\Big\|\Big(\sum_{\mu\in\mathbb{Z}}\Big(\Big(M_n\cdots
   M_1\big(\sum_{\ell(P)=2^{-\mu}}|b_P|\mathds{1}_P\big)^{t}\cdots\Big)^{\frac1t}\Big)^q\Big)^{\frac1q}\Big\|_{\vec{p}}.
  \end{split}
\end{equation}
So by invoking the maximal inequality \eqref{M2}, the definition of $b_P$ above, and that the sum
over $P$ contains a single term at each fixed $x$, respectively, we obtain 
\begin{equation}
  \begin{split}
  \|f\|_{F_{\vec{p},q}^s} &\le c\Big\|\Big(\sum_{\mu\in\mathbb{Z}}\Big(\sum_{\ell(P)=2^{-\mu}}|b_P|\mathds{1}_P\Big)^q\Big)^{1/q}\Big\|_{\vec{p}}
\\
  &=c\Big\|\Big(\sum_{\mu\in\mathbb{Z}}\Big(\sum_{\ell(P)=2^{-\mu}}|P|^{-s/n}|a_P|\widetilde{\mathds{1}}_P\Big)^q\Big)^{1/q}\Big\|_{\vec{p}}
    \le c\|a\|_{\dot{f}^{s}_{\vec{p},q}}.
  \end{split}
\end{equation}

Thus $T_\psi\colon \dot f^s_{\vec p,q}\to \dot F^s_{\vec p,q}$ has been shown to be bounded on the
subspace of sequences of finite support. 
For any given sequence $a$ in $\dot f^s_{\vec p,q}$ we recall from Proposition~\ref{fspq-prop} that
$T_\psi a$ is defined. This is exploited by choosing approximations $a^{(m)}$ having finite, increasing and exhausting
supports, so that by the last part of Theorem~\ref{Tpsi-thm} we have in the $\operatorname{w}^*$-topology of $\cZ'$ that
\begin{equation}
  T_\psi a=\lim_{m\to\infty} T_\psi a^{(m)}.
\end{equation}
Moreover, the boundedness and truncation give the uniform bound
\begin{equation}
  \| T_\psi a^{(m)}\|_{\dot F^s_{\vec p,q}} \le c\|a^{(m)}\|_{\dot f^s_{\vec p,q}}\le c\|a\|_{\dot f^s_{\vec p,q}}.
\end{equation}
In view of these facts, the Fatou property
yields that also $\| T_\psi a\|_{\dot F^s_{\vec p,q}} \le c\|a\|_{\dot f^s_{\vec p,q}}$; cf.\  Lemma~\ref{Fatou-lem}.
Hence $T_\psi$ is bounded on the full sequence space $\dot f^s_{\vec p,q}$. 
\end{proof}

\subsection{The analysis operator $S_\varphi$}
For the analysis operator $S_\varphi$ we adopt a 2-step procedure. For
clarity we write $\dot F^s_{\vec p,q}(\varphi)$ to emphasize that the Triebel--Lizorkin space, or
its norm, has been defined by means of the admissible function $\varphi$.

\begin{proposition}
  \label{Sphi-prop}
  If  $\varphi$ is admissible,  then $S_\varphi$  has the property of boundedness
 \begin{equation}
   \|S_\varphi f\|_{\dot f^s_{\vec p,q}} \le c\|f\|_{\dot F^s_{\vec p,q}(\tilde\varphi)}
  \qquad\text{for every $f\in \dot F^s_{\vec p,q}(\tilde\varphi)$}.
 \end{equation}
\end{proposition}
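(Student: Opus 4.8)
The plan is to reduce the estimate to the two maximal inequalities \eqref{M2} and \eqref{M3}. First I would rewrite the wavelet coefficients: for $Q=Q_{\nu k}$ the normalisation \eqref{phi7} together with the involution gives $(S_\varphi f)_Q=\langle f,\varphi_Q\rangle=|Q|^{1/2}\langle f,\varphi_\nu(\cdot-x_Q)\rangle=|Q|^{1/2}g_\nu(x_Q)$, where $g_\nu:=\tilde\varphi_\nu*f$. Here $g_\nu$ is a genuine $C^\infty$ function: $\widehat{\tilde\varphi}_\nu(\xi)=\overline{\hat\varphi(2^{-\nu}\xi)}$ vanishes near the origin, so the convolution is independent of the chosen representative of $f\in\cS'/\cP$, and by \eqref{phi2} it is supported in the ball $\{|\xi|\le K^02^\nu\}$, whence $g_\nu$ is band-limited and smooth (Paley--Wiener--Schwartz). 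Consequently the generic summand of \eqref{dTLnorm}, written via \eqref{eq:dLWP}, for $a=S_\varphi f$ becomes $|Q|^{-s/n}|(S_\varphi f)_Q|\widetilde{\mathds{1}}_Q=2^{\nu s}|g_\nu(x_Q)|\mathds{1}_Q$.

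The second and central step is to replace the corner value $g_\nu(x_Q)$ by a Peetre maximal function at a generic $x\in Q$. Fix $0<t<\min(1,p_1,\dots,p_n,q)$ and then $\tau\ge n/t$. For $x\in Q$ with $\ell(Q)=2^{-\nu}$ one has $|x-x_Q|\le\sqrt n\,2^{-\nu}$; and choosing $\nu^*\in\bZ$ with $2^{\nu^*-1}<K^02^\nu\le2^{\nu^*}$ makes $\supp\hat g_\nu\subset[-2^{\nu^*},2^{\nu^*}]^n$ while keeping $\nu^*-\nu$ bounded in terms of $K^0$, so that $2^{\nu^*}|x-x_Q|\le2\sqrt n\,K^0$. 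Hence
\begin{equation*}
  |g_\nu(x_Q)|\le(1+2^{\nu^*}|x-x_Q|)^{\tau}\sup_{y\in\bR^n}\frac{|g_\nu(y)|}{(1+2^{\nu^*}|y-x|)^{\tau}}\le c\sup_{y\in\bR^n}\frac{|g_\nu(y)|}{(1+2^{\nu^*}|y-x|)^{\tau}},
\end{equation*}
and \eqref{M3}, applied with $\nu^*$ in place of $\nu$, bounds the supremum by $c_t\bigl(M_n\cdots M_1|g_\nu|^t\cdots\bigr)^{1/t}(x)$. Since the dyadic cubes of side length $2^{-\nu}$ partition $\bR^n$, summing over them gives, for every $x\in\bR^n$,
\begin{equation*}
  \sum_{\ell(Q)=2^{-\nu}}2^{\nu s}|g_\nu(x_Q)|\mathds{1}_Q(x)\le c\,2^{\nu s}\bigl(M_n\cdots M_1|g_\nu|^t\cdots\bigr)^{1/t}(x).
\end{equation*}

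Finally I would take the $\ell_q(\bZ)$-quasi-norm in $\nu$ and the $L_{\vec p}$-quasi-norm of the left-hand side of the last display, which by \eqref{eq:dLWP} is exactly $\|S_\varphi f\|_{\dot f^s_{\vec p,q}}$, and apply the Fefferman--Stein vector-valued inequality \eqref{M2} to the family $f_\nu=2^{\nu s}g_\nu$ (legitimate for the chosen $t$), to get
\begin{equation*}
  \|S_\varphi f\|_{\dot f^s_{\vec p,q}}\le c\Bigl\|\Bigl(\sum_{\nu\in\bZ}\bigl(2^{\nu s}(M_n\cdots M_1|g_\nu|^t\cdots)^{1/t}\bigr)^q\Bigr)^{1/q}\Bigr\|_{\vec p}\le c\Bigl\|\Bigl(\sum_{\nu\in\bZ}(2^{\nu s}|g_\nu|)^q\Bigr)^{1/q}\Bigr\|_{\vec p}=c\|f\|_{\dot F^s_{\vec p,q}(\tilde\varphi)};
\end{equation*}
for $q=\infty$ the $\ell_q$-sums in $\nu$ are replaced by $\sup_\nu$ throughout, \eqref{M2} being valid then as well.

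Steps 1 and 3 are routine bookkeeping, so I expect the only real obstacle to be Step 2: reconciling the ball-shaped spectral support of $g_\nu$, of radius $K^02^\nu$ and hence \emph{not} contained in $[-2^\nu,2^\nu]^n$, with the cube required by the stated form of \eqref{M3}, without any loss of control over the constants; and, secondarily, making sure $g_\nu$ is a function rather than merely a distribution so that $M_n\cdots M_1|g_\nu|^t$ makes sense. Both are dealt with by passing to the nearby dyadic scale $2^{\nu^*}$ and by the Paley--Wiener--Schwartz theorem, respectively.
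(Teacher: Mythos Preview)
Your proof is correct and follows essentially the same route as the paper: rewrite $(S_\varphi f)_Q=|Q|^{1/2}\tilde\varphi_\nu*f(x_Q)$, bound the corner value by the Peetre maximal function via \eqref{M3} after passing to a nearby dyadic scale $\nu^*$ (the paper's $\nu'$) to accommodate the spectral radius $K^02^\nu$, and finish with the Fefferman--Stein inequality \eqref{M2}. Your added remarks on representative-independence and the Paley--Wiener--Schwartz argument are welcome extra care but do not change the strategy.
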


\begin{proof}
Let $f$ be arbitrary in $\dot{F}^{s}_{\vec{p},q}(\tilde\varphi)$ for the given admissible function
$\tilde\varphi$. 
For $Q\in\mathcal{Q}$ with $\ell(Q)=2^{-\nu}$, $\nu\in\mathbb{Z}$,
we obtain as in \eqref{eq:phiQ} that 
\begin{equation*}
  (S_{\varphi}f)_Q=\langle f,\varphi_Q\rangle=|Q|^{1/2}\tilde{\varphi}_{\nu}*f(x_Q).
\end{equation*}
Therefore we crudely get for any $t>0$, since $1+2^{\nu}|x_Q-x|\le 1+\sqrt{n}$ for $x\in Q$,  
\begin{equation}
  \begin{split}
    \sum_{\ell(Q)=2^{-\nu}} \frac{|(S_{\varphi}f)_Q|^q}{|Q|^{sq/n}}\widetilde{\mathds{1}}_Q(x)
  &\le c\sum_{\ell(Q)=2^{-\nu}}\frac{2^{\nu qs}|\tilde{\varphi}_{\nu}*f(x_Q)|^q}{(1+2^{\nu}|x_Q-x|)^{nq/t}}\mathds{1}_Q(x)
\\
  &\le c\sum_{\ell(Q)=2^{-\nu}}\left(\sup_{y\in \bR^n}
         \frac{2^{\nu s}|\tilde{\varphi}_{\nu}*f(y)|}{(1+2^{\nu}|y-x|)^{n/t}}\right)^{q}\mathds{1}_Q(x).
  \end{split}
\label{p6}
\end{equation}
Fixing $t<\min(p_1,\dots,p_n,q)$ 
we may apply the maximal inequality \eqref{M3}, as \eqref{phi5} entails 
$\text{supp} (\widehat{\tilde\varphi_{\nu}*f})=\text{supp}(\widehat{\tilde\varphi_{\nu}}\hat{f})
\subset [-K^02^{\nu},K^02^{\nu}]^n$, which is contained in $[-2^{v'},2^{v'}]^n$ for some $\nu'>\nu$.
Thus we get, for $x\in\mathbb{R}^n$,
\begin{equation}  \label{p7}
  \sum_{\ell(Q)=2^{-\nu}} \frac{|(S_{\varphi}f)_Q|^q}{|Q|^{sq/n}} \widetilde{\mathds{1}}_Q(x)
  \le c\left(\Big(M_n\cdots M_1\big(2^{\nu
      s}|\tilde{\varphi}_{\nu}*f|\big)^{t}\cdots\Big)^{1/t}(x)\right)^q
  \mathds{1}_{\bR^n}(x).
\end{equation}
We pass to the discrete Triebel--Lizorkin norm of $S_{\varphi}f$ by calculating the norm of 
$\ell_q$ with respect to $\nu\in\bZ$ and that of
$L_{\vec p}(\bR^n)$ on both sides of \eqref{p7}. So by using the maximal inequality \eqref{M2} we obtain
\begin{equation}
    \|S_{\varphi}f\|_{f_{\vec{p},q}^s} 
   \le c\Big\|\Big(\sum_{\nu\in\mathbb{Z}}\Big(M_n\cdots M_1\big(2^{\nu s}|
    \tilde{\varphi}_{\nu}*f|\big)^t(\cdot)\Big)^{q/t}\Big)^{1/q}\Big\|_{\vec{p}} 
   \le c\|f\|_{\dot{F}^{s}_{\vec{p},q}(\tilde\varphi)}.
\end{equation}
This proves the stated inequality for $S_\varphi$.
\end{proof}

The above result could be improved, since it would be natural to replace
$\tilde\varphi$ by $\varphi$ in the inequality---or indeed to replace it by an arbitrary admissible
function $\Phi$, so that boundedness of $S_\varphi$ would be decoupled from the choice of norm on
$\dot F^s_{\vec p,q}$. 

The remedy lies in a classical argument from the $\varphi$-transform theory.
But it is a main point that heuristic use of $T_\psi a$ as a ``sum'' should be replaced by rigorous
reference to the definition by the Pettis integral, so we proceed with diligence:

Let $\varphi$, $\Phi$ be two arbitrary admissible functions. Then there are admissible functions $\psi$,
$\Psi$ such that each couple $(\varphi,\psi)$, $(\Phi,\Psi)$ satisfies \eqref{phi4}. This implies that
\begin{equation}
  \|f\|_{\dot{F}^s_{\vec{p},q}(\varphi)} = \|\sum_Q (S_{{\Phi}}f)_Q{\Psi}_Q\|_{\dot{F}^s_{\vec{p},q}(\varphi)}
  \le c\|S_{{\Phi}}f\|_{\dot{f}^s_{\vec{p},q}}\le c\|f\|_{\dot{F}^s_{\vec{p},q}(\tilde{\Phi})}.
\end{equation}
Indeed, we may substitute $f= \sum_Q (S_{{\Phi}}f)_Q{\Psi}_Q$ in the
first norm, since we proved in all details that $T_\Psi$ is a left-inverse of $S_\Phi$ on
$\cS'/\cP$; cf.\ Proposition~\ref{l:identity}. 
And the first inequality above holds, since $T_\Psi$ is everywhere defined and bounded according to
Proposition~\ref{Tpsi-prop} (no connection between the two admissible functions $\varphi$, $\Psi$ is required). 
Finally Proposition~\ref{Sphi-prop} suffices for the last inequality.

Substituting by the admissible functions $\tilde\Phi$ and $\tilde\varphi$, it is seen at once that also 
$\|f\|_{\dot{F}^s_{\vec{p},q}(\tilde{\Phi})} \le c\|f\|_{\dot{F}^s_{\vec{p},q}(\varphi)}$ holds. 
Consequently either both or none of the Triebel--Lizorkin norms are finite on any given 
$f\in\cS'/\cP$. Therefore $\dot F^s_{\vec p,q}(\varphi)$ equals the space 
$\dot F^s_{\vec   p,q}(\tilde\Phi)$; and their norms are equivalent in view of the just shown
inequalities. 

Hence Proposition~\ref{Sphi-prop} can be sharpended to boundedness of $S_\varphi$ with respect to
any norm on $\dot F^s_{\vec p,q}$. Thus we have completed the 2-step procedure; the outcome may be stated as follows:

\begin{proposition}
  \label{Fnorm-prop}
When $\varphi$, $\Phi$ are admissible for the same set of constants in \eqref{phi2}--\eqref{phi01},
then the induced Triebel--Lizorkin spaces coincide and the corresponding norms are equivalent, i.e.\
\begin{equation}
  \|\cdot\|_{\dot F^s_{\vec p,q}(\varphi)}\approx \|\cdot\|_{\dot F^s_{\vec p,q}(\Phi)}.  
\end{equation}
Moreover, $S_\varphi\colon \dot F^s_{\vec p,q}\to \dot f^s_{\vec p,q}$ is a bounded operator.
\end{proposition}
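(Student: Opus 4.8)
The plan is to close a short circular chain of inequalities built from the reconstruction identity and the boundedness results already established. Given admissible $\varphi$, $\Phi$, I would first invoke Lemma~\ref{phi4-lem} to pick admissible $\psi$, $\Psi$ for which $(\varphi,\psi)$ and $(\Phi,\Psi)$ each satisfy \eqref{phi4}. For arbitrary $f\in\cS'/\cP$ one then has
\begin{equation*}
  \|f\|_{\dot F^s_{\vec p,q}(\varphi)}
  = \|T_\Psi(S_\Phi f)\|_{\dot F^s_{\vec p,q}(\varphi)}
  \le c\|S_\Phi f\|_{\dot f^s_{\vec p,q}}
  \le c\|f\|_{\dot F^s_{\vec p,q}(\tilde\Phi)} .
\end{equation*}
Here the first equality is the rigorously proved identity $T_\Psi\circ S_\Phi=I$ on $\cS'/\cP$ (Proposition~\ref{l:identity}), used as an honest identity of distributions since $T_\Psi$ has a precise meaning via the Pettis integral and $R(S_\Phi)\subset D(T_\Psi)$ by Proposition~\ref{TS-prop}; the first inequality is the boundedness of the everywhere-defined map $T_\Psi\colon\dot f^s_{\vec p,q}\to\dot F^s_{\vec p,q}(\varphi)$ from Proposition~\ref{Tpsi-prop}, which requires no relation between $\varphi$ and $\Psi$; and the last inequality is Proposition~\ref{Sphi-prop} applied to $S_\Phi$.

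Next I would note that $\tilde\varphi$ is admissible for the same constants whenever $\varphi$ is, since $\cF\tilde\varphi=\overline{\cF\varphi}$ has the same modulus and support, and that $\tilde{\tilde\varphi}=\varphi$. Applying the displayed chain with $\varphi$ replaced by $\tilde\Phi$ and $\Phi$ replaced by $\tilde\varphi$ gives $\|f\|_{\dot F^s_{\vec p,q}(\tilde\Phi)}\le c\|f\|_{\dot F^s_{\vec p,q}(\varphi)}$, so $\|\cdot\|_{\dot F^s_{\vec p,q}(\varphi)}$ and $\|\cdot\|_{\dot F^s_{\vec p,q}(\tilde\Phi)}$ are equivalent; in particular one is finite exactly when the other is, so the underlying sets coincide. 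Since $\Phi$ was an arbitrary admissible function, replacing it by $\tilde\Phi$ yields $\|\cdot\|_{\dot F^s_{\vec p,q}(\varphi)}\approx\|\cdot\|_{\dot F^s_{\vec p,q}(\Phi)}$, which is the first assertion.

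For the ``moreover'' part I would feed this $\varphi$-independence back into Proposition~\ref{Sphi-prop}: taking $\Phi=\tilde\varphi$ in the equivalence just obtained gives $\|f\|_{\dot F^s_{\vec p,q}(\tilde\varphi)}\le c\|f\|_{\dot F^s_{\vec p,q}(\varphi)}$, hence $\|S_\varphi f\|_{\dot f^s_{\vec p,q}}\le c\|f\|_{\dot F^s_{\vec p,q}(\tilde\varphi)}\le c\|f\|_{\dot F^s_{\vec p,q}(\varphi)}$, i.e. $S_\varphi\colon\dot F^s_{\vec p,q}\to\dot f^s_{\vec p,q}$ is bounded.

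I do not expect a genuine obstacle here: all the analysis already sits in Propositions~\ref{l:identity}, \ref{Tpsi-prop} and \ref{Sphi-prop}. The only thing demanding care is bookkeeping --- invoking $T_\Psi\circ S_\Phi=I$ as a true distributional identity rather than a heuristic series manipulation (legitimate precisely because $T_\Psi$ has been given a concise meaning and its domain shown to contain $R(S_\Phi)$), and keeping consistent track of which admissible function occupies which slot so that the substitutions of $\tilde\Phi$ and $\tilde\varphi$ for $\varphi$ and $\Phi$ are applied correctly.
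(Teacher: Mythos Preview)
Your proposal is correct and follows essentially the same argument as the paper: the same chain $\|f\|_{\dot F^s_{\vec p,q}(\varphi)}=\|T_\Psi(S_\Phi f)\|_{\dot F^s_{\vec p,q}(\varphi)}\le c\|S_\Phi f\|_{\dot f^s_{\vec p,q}}\le c\|f\|_{\dot F^s_{\vec p,q}(\tilde\Phi)}$ via Propositions~\ref{l:identity}, \ref{Tpsi-prop} and \ref{Sphi-prop}, then the same substitution of $\tilde\Phi$, $\tilde\varphi$ to close the loop. Your added remark that $\tilde\varphi$ is admissible with the same constants is a helpful piece of bookkeeping the paper leaves implicit.
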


\subsection{Proof of Theorem \ref{th:main}}
The boundedness of $S_\varphi$ and $T_\psi$ has been obtained in Proposition~\ref{Fnorm-prop} and
\ref{Tpsi-prop}, respectively.

Combining the boundedness and the identity $T_\psi\circ S_\varphi=I$, cf.\
Proposition~\ref{l:identity}, one gets at once that  
$\| S_\varphi f\|_{\dot f^s_{\vec p,q}}$ is equivalent to the norm on $\dot F^s_{\vec p,q}$: 
that is, for certain constants $B\ge 1\ge A>0$ we have the classical inequalities
\begin{equation}
  A\|f\|_{\dot F^s_{\vec p,q}}\le \| S_\varphi f\|_{\dot f^s_{\vec p,q}}\le B \|f\|_{\dot F^s_{\vec p,q}}.  
\end{equation}
Secondly, $P:=S_\varphi\circ T_\psi$ is a continuous idempotent, and as such projects onto
its range $R(S_\varphi)$ along the nullspace of $T_\psi$; and the range is closed (cf.\ the proof
of Corollary~\ref{Fspq-cor} below). More precisely, $S_\varphi(\dot F^s_{\vec p,q})$ is a
complemented subspace, i.e.\ with a direct sum of (quasi-)Banach spaces,
\begin{equation}  \label{directsum-id}
   \dot f^s_{\vec p,q}=S_\varphi(\dot F^s_{\vec p,q})\oplus \{\, a\mid T_\psi a=0\,\}.  
\end{equation}
This concludes the proof of  Theorem \ref{th:main}.

Now it is straightforward to derive additional properties, as expected.

\begin{corollary} \label{Fspq-cor}
  $\dot{F}^s_{\vec{p},q}(\bR^n)$ is complete and the range
  $S_{\varphi}(\dot{F}^s_{\vec{p},q})$ is closed in $\dot{f}^s_{\vec{p},q}$.
\end{corollary}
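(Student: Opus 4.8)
The plan is to exploit that, by Theorem~\ref{th:main}, the map $S_\varphi$ identifies $\dot F^s_{\vec p,q}$ with a complemented subspace of the already-complete sequence space $\dot f^s_{\vec p,q}$ (Lemma~\ref{lem:f_complete}), so both assertions become short consequences of the operator identities $T_\psi\circ S_\varphi=I$ and $P=S_\varphi\circ T_\psi$ being a bounded idempotent. First I would recall the two-sided estimate
\begin{equation*}
  A\|f\|_{\dot F^s_{\vec p,q}}\le \|S_\varphi f\|_{\dot f^s_{\vec p,q}}\le B\|f\|_{\dot F^s_{\vec p,q}},\qquad f\in\dot F^s_{\vec p,q},
\end{equation*}
together with the facts, established in Propositions~\ref{fspq-prop}, \ref{Tpsi-prop} and \ref{Fnorm-prop}, that $T_\psi$ is everywhere defined and bounded on $\dot f^s_{\vec p,q}$ and that $T_\psi\circ S_\varphi=I$ on $\dot F^s_{\vec p,q}$ (Proposition~\ref{l:identity}).

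For completeness: given a Cauchy sequence $\{f^{(m)}\}$ in $\dot F^s_{\vec p,q}$, the lower bound shows $\{S_\varphi f^{(m)}\}$ is Cauchy in $\dot f^s_{\vec p,q}$, hence converges to some $a$ by Lemma~\ref{lem:f_complete}; setting $f:=T_\psi a\in\dot F^s_{\vec p,q}$ and using boundedness of $T_\psi$ gives $f^{(m)}=T_\psi(S_\varphi f^{(m)})\to T_\psi a=f$ in $\dot F^s_{\vec p,q}$. The only point requiring a moment's care is that we are in the quasi-normed setting, but convergence of Cauchy sequences is still meaningful there (and if desired one may pass to an equivalent $r$-norm via the Aoki--Rolewicz theorem).

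For closedness of the range: $P:=S_\varphi\circ T_\psi$ is a bounded operator on $\dot f^s_{\vec p,q}$ which is idempotent, since $P^2=S_\varphi(T_\psi S_\varphi)T_\psi=S_\varphi\circ T_\psi=P$ by $T_\psi\circ S_\varphi=I$. Hence $I-P$ is bounded and $R(P)=\ker(I-P)$ is closed. It remains to observe that $R(P)=R(S_\varphi)$: the inclusion $R(P)\subset R(S_\varphi)$ is clear, and conversely for $f\in\dot F^s_{\vec p,q}$ one has $S_\varphi f=S_\varphi(T_\psi S_\varphi f)=P(S_\varphi f)\in R(P)$. Thus $S_\varphi(\dot F^s_{\vec p,q})$ is closed in $\dot f^s_{\vec p,q}$, and in fact complemented as recorded in \eqref{directsum-id}.

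There is no real obstacle here beyond bookkeeping; the argument is entirely formal once Theorem~\ref{th:main} is in hand. The subtlest ingredient used is the earlier (nontrivial) fact that $T_\psi$ is defined and bounded on the whole of $\dot f^s_{\vec p,q}$, not merely on the range of $S_\varphi$, which is precisely what makes $P$ a genuine bounded idempotent on the sequence space.
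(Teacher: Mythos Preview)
Your proof is correct and essentially matches the paper's: the completeness argument is identical (push a Cauchy sequence through $S_\varphi$, use Lemma~\ref{lem:f_complete}, pull back via the bounded $T_\psi$ and $T_\psi S_\varphi=I$), and for closedness you use the bounded idempotent $P=S_\varphi T_\psi$ with $R(P)=\ker(I-P)=R(S_\varphi)$, which the paper also records in the discussion around \eqref{directsum-id}, while its proof of the corollary phrases the same fact as ``apply $S_\varphi$ to the equation $T_\psi a=\lim_k f_k$''. These are the same mechanism in two wordings.
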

\begin{proof}
  Every Cauchy sequence $f_k$ in $\dot{F}^s_{\vec{p},q}$ is sent into another Cauchy sequence
  $S_{\varphi}f_k$ by the bounded map $S_{\varphi}$; this has a limit $a=\{a_Q\}$ by the completeness
  of the sequence space shown in Lemma~\ref{lem:f_complete}. Using the boundedness of $T_\psi$ and
  that $S_{\varphi}$ is a right-inverse, cf.\ Theorem~\ref{th:main}, one obtains with limits in
  $\dot{F}^s_{\vec{p},q}$ that
  \begin{equation}
    T_\psi a=\lim_kT_\psi( S_{\varphi}f_k)=\lim_k f_k.
  \end{equation}
  This shows completeness. That $S_{\varphi}$ has closed range can be shown analogously, if one concludes by
  applying $S_{\varphi}$ to the above equation.
\end{proof}

\begin{corollary} \label{basis-cor}
  If $\varphi$, $\psi$ are admissible and fulfil the reconstruction identity \eqref{phi4} and the
  biorthogonality condition \eqref{biortho-id}, then the wavelets $\psi_Q$ give, through any numbering of
  the cubes $Q\in\cQ$, an unconditional basis for every $\dot F^s_{\vec{p},q}$ having $q<\infty$.
\end{corollary}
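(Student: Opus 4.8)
The plan is to run the same argument as in Theorem~\ref{basis-thm}, but carried out inside the quasi-Banach space $\dot F^s_{\vec p,q}$ rather than in $\cZ'$, using that by Theorem~\ref{th:main} the pair $(S_\varphi,T_\psi)$ realises $\dot F^s_{\vec p,q}$ as a complemented subspace of $\dot f^s_{\vec p,q}$, and that for $q<\infty$ the sequences of finite support are dense in $\dot f^s_{\vec p,q}$ by Lemma~\ref{lem:f_complete}. Fix an arbitrary numbering $Q_1,Q_2,\dots$ of $\cQ$ and let $f\in\dot F^s_{\vec p,q}$, with $a:=S_\varphi f\in\dot f^s_{\vec p,q}$. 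First I would show that the truncations $a^{(m)}:=\mathds 1_{\{Q_1,\dots,Q_m\}}a$ converge to $a$ in $\dot f^s_{\vec p,q}$; this is exactly the density statement of Lemma~\ref{lem:f_complete} applied to the chain $\cQ_m=\{Q_1,\dots,Q_m\}$, and it uses $q<\infty$ in an essential way. Applying the bounded operator $T_\psi$ (Proposition~\ref{Tpsi-prop}) and using $T_\psi a^{(m)}=\sum_{j=1}^m a_{Q_j}\psi_{Q_j}$ together with $T_\psi a=T_\psi(S_\varphi f)=f$ (Proposition~\ref{l:identity}), one gets
\begin{equation}
   f=\sum_{j=1}^\infty a_{Q_j}\psi_{Q_j}\qquad\text{with convergence in }\dot F^s_{\vec p,q},
\end{equation}
and since the numbering was arbitrary, the same holds after any permutation, so every such series converges unconditionally to $f$.

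Next I would establish uniqueness of the coefficients, i.e.\ that $\{\psi_{Q_j}\}$ is genuinely a basis and not merely a spanning system with summable expansions. Suppose $f=\sum_{j=1}^\infty b_{Q_j}\psi_{Q_j}$ in $\dot F^s_{\vec p,q}$ for some scalars $b_{Q_j}$. Applying the bounded functional $g\mapsto\langle g,\varphi_{Q_k}\rangle$ on $\dot F^s_{\vec p,q}$ (bounded because $S_\varphi\colon\dot F^s_{\vec p,q}\to\dot f^s_{\vec p,q}$ is bounded by Proposition~\ref{Fnorm-prop} and evaluation of a sequence at a fixed cube is bounded on $\dot f^s_{\vec p,q}$, cf.\ \eqref{aQ-est}) and using the biorthogonality \eqref{biortho-id}, namely $\langle\psi_{Q_j},\varphi_{Q_k}\rangle=\delta_{Q_j,Q_k}$, termwise continuity gives $b_{Q_k}=\langle f,\varphi_{Q_k}\rangle=a_{Q_k}$ for every $k$. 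Hence the coefficients are unique and equal to the $\varphi$-transform coefficients, the coefficient functionals $f\mapsto\langle f,\varphi_{Q_j}\rangle$ are continuous, so $\{\psi_{Q_j}\}$ is a Schauder basis; combined with the permutation-invariance from the first paragraph it is an unconditional basis, for every $q<\infty$ and every admissible $\vec p$, $s$.

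The main obstacle is purely a matter of being careful about where convergence takes place: Theorem~\ref{Tpsi-thm} only gives $\operatorname{w}^*$-convergence of the truncated sums in $\cZ'$, whereas here I need convergence in the stronger topology of $\dot F^s_{\vec p,q}$. This upgrade is precisely what the quantitative boundedness in Proposition~\ref{Tpsi-prop} plus the norm-density in Lemma~\ref{lem:f_complete} buy us: $\|T_\psi a^{(m)}-T_\psi a\|_{\dot F^s_{\vec p,q}}\le c\,\|a^{(m)}-a\|_{\dot f^s_{\vec p,q}}\to 0$. The case $q=\infty$ is genuinely excluded because finite sequences are not dense in $\dot f^s_{\vec p,\infty}$, so the series need not converge in norm; this is the familiar failure of Schauder bases in $\ell_\infty$-type spaces and explains the hypothesis $q<\infty$.
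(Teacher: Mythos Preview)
Your proposal is correct and follows essentially the same approach as the paper's proof: both use density of finitely supported sequences in $\dot f^s_{\vec p,q}$ for $q<\infty$ (Lemma~\ref{lem:f_complete}), boundedness of $T_\psi$, and the identity $T_\psi\circ S_\varphi=I$ to get norm convergence of the partial sums to $f$ for any numbering. The only cosmetic difference is that the paper imports the uniqueness of coefficients from Theorem~\ref{basis-thm} (via the continuous embedding $\dot F^s_{\vec p,q}\hookrightarrow\cZ'$), whereas you reprove it directly in $\dot F^s_{\vec p,q}$ using biorthogonality and boundedness of $S_\varphi$; these are the same argument carried out in two different ambient spaces.
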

\begin{proof}
  According to Theorem~\ref{basis-thm} we have $f=\sum_{j=1}^\infty a_{Q_j}\psi_{Q_j}$ for
  $a=S_\varphi f$ in $\cZ'$, with unique scalars; by the continuous injection in 
  Proposition~\ref{ZZ'-prop}, the $f\mapsto a_{Q_j}$ are also continuous on  $\dot F^s_{\vec{p},q}$.
  To show convergence in the topology of $\dot F^s_{\vec{p},q}$, we
  introduce sequences of finite support $a^{(m)}=a\mathds{1}_{\{Q_1,\dots,Q_m\}}$, that 
  converge to $a$ in $\dot f^s_{\vec{p},q}$ for $q<\infty$; cf.\ the proof of
  Lemma~\ref{lem:f_complete}. Now, by the continuity of $T_\psi$ in Theorem~\ref{th:main}, 
  $\sum_{j=1}^m a_{Q_j}\psi_{Q_j}=T_\psi a^{(m)}$ converges for $m\to\infty$ 
  to $T_\psi a=T_\psi(S_\varphi f)=f$ in $\dot F^s_{\vec{p},q}$. Any other numbering gives the same
  result in view of Theorem~\ref{basis-thm}.
\end{proof}

\begin{remark} With Corollary~\ref{basis-cor} we just want to indicate how
  useful the rigorous definition of $T_\psi$ is in the discussion.
  Unconditional bases have also been emphasized by Triebel in a space-by-space approach in his
  works on wavelets, cf.\ \cite[3.1.3]{MR2250142} or \cite[Thm.~1.20]{MR2455724}, but without explicit proofs. 
\end{remark}

\appendix
\section{Convolution estimates} \label{p*p-app}
The proof of Lemma~\ref{lem:p*p} can be conducted as follows.
In the convolution integral one can exploit the classical estimate
  \begin{equation}
    (1+2^{\mu}|x-x_J|)^N\le (1+2^{\mu}|x-x_J-y|)^N(1+2^\mu|y|)^N,
  \end{equation}
which yields at once that the left-hand side of \eqref{eq:p*p} at most
equals $\sup (1+|\cdot|)^N|\psi|$ times the integral $\int
(1+2^\mu|y|)^N|2^{n\nu}\varphi(2^\nu y)|\,dy$. So for $\mu\le \nu$ one may set
$y=2^{-\nu}w$ to obtain \eqref{eq:p*p} with the constant 
$C_N=\sup(1+|\cdot|)^N|\psi| \cdot \int_{\bR^n} |\varphi(w)|(1+|w|)^N\,dw$.

For $\mu>\nu$ the convolution is written as $\int
\psi(z)\varphi(2^\nu(x-x_J-2^{-\mu}z))2^{n(\nu-\mu)}\,dz$.
So by taking $y=2^{-\mu}z$ in the above inequality, where $1\le 2^{\mu-\nu}$, one finds
\eqref{eq:p*p} with the extra factor $2^{(N-n)(\mu-\nu)}$, now for 
$C_N=\sup(1+|\cdot|)^N|\varphi| \cdot \int_{\bR^n} |\psi(w)|(1+|w|)^N\,dw$.

Elaborating on this, the moment condition yields that $\psi(z)$ vanishes by integration against the Taylor polynomial
of order $M$ of $\varphi$, whence
\begin{equation}
  \begin{split}
      \psi(2^\mu(\cdot-x_J))*\varphi_\nu(x)& = 
  \sum_{|\alpha|=M+1}
  \int 2^{n(\nu-\mu)}\psi(z)(-2^{\nu-\mu}z)^\alpha\varphi^{(\alpha)}(z)\,dz
\\
  \text{for}\quad \varphi^{(\alpha)}(z) &= \frac{M+1}{\alpha!}\!
  \int_0^1(1-\theta)^M\partial^\alpha\varphi(2^\nu(x-x_J)-\theta2^{\nu-\mu}z)\,d\theta.
  \end{split}
\end{equation}
For $y=\theta2^{-\mu}z$ the above inequality shows that one can take $C'_{N,M}=C'_{N,M}(\varphi,\psi)$  to be
$\sum_{|\alpha|=M+1}\sup(1+|\cdot|)^N|\partial^\alpha\varphi|/\alpha!$ times
$\int|\psi|(1+|\cdot|)^{N+M+1}\,dz$  for $\mu>\nu$. (Note that \eqref{eq:p*pM} is identical to \eqref{eq:p*p} for
$\mu\le \nu$.)

Finally, it is analogous to derive \eqref{eq:p*pM'} by letting $\psi$ and $\varphi$ change
roles, beginning with the convolution in the form 
$\int \psi(2^\mu(x-x_J-2^{-\nu}z))\varphi(z)\,dz$. This gives
$C''_{N,M}=C'_{N,M}(\psi,\varphi)$. The proof is complete.

\begin{remark}  \label{p*p-rem}
  The proof is valid \emph{verbatim} for $\mu,\nu\in\bR$, i.e.\ for dilation by $s=2^\nu>0$, $t=2^\mu>0$.
\end{remark}

\section{Proof of Lemma~\ref{l:star}}\label{maximal_lemma}
It suffices to prove the statement in Lemma~\ref{l:star} for any $\tau>n$ and $t=1$, for one can just replace $|a_P|$ by
$|a_P|^t$ and raise to the power $1/t$: on the left-hand side the fact that 
$\|\cdot\|_{\ell_1}\le\|\cdot\|_{\ell_t}$ for $0<t\le1$ gives the rest as $\tau/t>n/t$.

It also suffices to cover the case $\ell(Q)\ge \ell(P)$, i.e.\ $2^{-\nu}\ge2^{-\mu}$ or $\nu\le \mu$. In
fact, given $x\in Q$ for $\ell(Q)<\ell(P)$, then $x$ also belongs to a cube $J\in\cQ$ with $x_J=x_Q$ and
$\ell(J)=\ell(P)$, for which one then arrives at the inequality stated for $Q$.

We split the set of $P$'s as $\bigcup_{k\in\bN_0} \Omega_k$, whereby
\begin{align}
  \Omega_0&=\{\,P\in\cQ \mid \ell(P)=2^{-\mu} \text{ and } |x_P-x_Q|\le2^{-\nu}\,\},
\\
  \Omega_k&=\{\,P\in\cQ \mid \ell(P)=2^{-\mu} \text{ and }   2^{k-1-\nu}<|x_P-x_Q|\le2^{k-\nu}\,\},\quad k\ge1.
\end{align}
When $P\in\Omega_k$ we have $1+2^{\nu}|x_P-x_Q|>2^{k-1},$ so 
\begin{equation} \label{ls1}
  \sum_{\ell(P)=2^{-\mu}}|a_P|\Big(1+\frac{|x_P-x_Q|}{\ell(Q)}\Big)^{-\tau}
  \le 2^\tau\sum_{k=0}^{\infty}2^{-k\tau}\sum_{P\in\Omega_k}|a_P|.
\end{equation}
Because the $P$ in $\Omega_k$ are disjoint, and since $|P|=2^{-\mu n}$,
\begin{equation}  \label{ls2}
   \sum_{P\in\Omega_k}|a_P|=\int_{R}\big(\sum_{P\in\Omega_k}|a_P|2^{n\mu}\mathds{1}_P(y)\big)\,dy.
\end{equation}
Indeed, $\bigcup_{\Omega_k}P$ is contained in $R= x_Q+[-2^{k-\nu+1},2^{k-\nu+1}]^n$, for if $(y_1,\dots,y_n)\in P$ 
\begin{equation}
  |y_j-(x_Q)_j|\le |y_j-(x_P)_j|+|(x_P)_j-(x_Q)_j|\le 2^{-\nu} + 2^{k-\nu}\le2^{k-\nu+1}.  
\end{equation}
Since the side length of $R$ is $2^{k-\nu+2}$, every $x\in Q$ is in $R$, so by \eqref{M1}
\begin{equation}  \label{ls3}
  \sum_{P\in\Omega_k}|a_P| \le 4^n2^{(k-\nu+\mu)n} (M_n\cdots M_1\big(\sum_{\ell(P)=2^{-\mu}}|a_P|\mathds{1}_P\big)\dots)(x).
\end{equation}
Inserting \eqref{ls3} in \eqref{ls1}, it is straightforward to sum over $k$ and
obtain the inequality stated in Lemma~\ref{l:star}, since we assumed that $\tau>n$.

\begin{remark} \label{maximal-rem}
  As a corollary to the above proof, one finds the extension of Peetre's maximal inequality to
  the functions in Proposition~\ref{maximal-prop}
  (by the Paley--Wiener--Schwartz Theorem such functions are constant if they have
  compact spectrum as in \eqref{M3}). 
  Indeed, in \eqref{M3} each $y$ is in some $P$, and given $x\in Q$, the triangle inequality
  yields $1+2^\nu|x_P-x_Q|\le (1+2\sqrt n)(1+2^\nu|x-y|)$, hence an estimate from above by means of
  $\sum_{\ell(P)=2^{-\mu}}|a_P|\big(1+2^\nu|x_P-x_Q|\big)^{-\tau}$ for $\tau>n/t$.
  Taking $\mu=\nu$ in Lemma~\ref{l:star} there is a further estimate in terms of $c(M_n\dots M_1 |f|^t)^{1/t}(x)$, as
  claimed. 
\end{remark}

\section{Homogeneous Littlewood--Paley decompositions} \label{LP'-app}
  It is known that when $\hat\phi\in C_0^\infty(\bR^n)$ with $\supp\hat\phi\not\ni 0$, so that $\supp\hat\phi$
  is contained in an annulus $0<C_0\le |\xi|\le C^0$, and $\phi$ fulfils 
  \begin{equation}
    \label{eq:xi0}
  \sum_{\nu=-\infty}^\infty \hat\phi_\nu(\xi)=1 \qquad\text{ for $\xi\ne0$},   
  \end{equation}
  then there is a kind of Littlewood--Paley decomposition of every $f\in\cS'(\bR^n)$,
  \begin{equation} \label{LP-id}
    f= \sum_{\nu=-\infty}^\infty \phi_\nu*f \qquad\text{in}\quad \cS'/\cP.
  \end{equation}
The fact that the limit $\nu\to-\infty$ in general gives difficulties was first pointed out by  
Peetre~\cite[p.~52--54]{MR0461123}, who also gave the above remedy that all terms on the two sides must be
understood modulo polynomials (cf.\ Remark~\ref{Peetre-rem}).

However, the statement can be made rather more precise, and we also attempt to explain the
situation from a natural point of view. To achieve this, we say for the
sake of precision that $f\in \cS'$ has \emph{temperate} order $d$, or $\cS'$-order $d$, when
$d$ is the smallest integer such that $f$ is estimated in terms of the seminorm $p_d$: 
\begin{equation} \label{S'-order}
  |\langle f,\psi\rangle|\le c p_d(\psi), \qquad\text{for}\quad p_d(\psi)=\sup_{|\alpha|\le
    d}\|(1+|x|)^dD^\alpha\psi\|_\infty,\quad
   \psi\in\cS.
\end{equation}
The reader may recall that $\Lambda=e^x\cos e^x$ has order $0$ in $\cD'(\bR)$; but $\Lambda$ is
temperate with the value  $\langle \Lambda,\psi\rangle =\int_{\bR}-\psi'(x)\sin e^x\,dx$ for
$\psi\in\cS(\bR)$, so $\Lambda$ has $\cS'$-order $d\ge1$.

For the convergence question in \eqref{LP-id} we may conveniently depart from a convolution 
\begin{align} \label{fPhi-id}
  f-\sum_{\nu=-N}^\infty \phi_\nu*f = \Phi_{-N}*f&= 2^{-Nn}\Phi(2^{-N}\cdot)*f,
\\
  \text{for}\qquad \hat\Phi_{-N}(\xi)&=1-\sum_{\nu=-N}^\infty\hat \phi(2^{-\nu}\xi),
\end{align}
whereby $\hat\Phi_{-N}$ is $C^\infty$ and supported in the ball $|\xi|\le C^0/2^{N+1}$. 
Since $\hat\Phi_{-N}(\xi)=\hat\Phi(2^N\xi)$ holds by inspection if $\Phi:=\Phi_0$,
this is consistent with our notation for dilations.

As a simple example, for $f\in L_1(\bR^n)$ the right-hand side of \eqref{fPhi-id} tends to $0$. In fact, it is
$\cO(2^{-nN})$ as the mere convolution $\Phi(2^{-N}\cdot)*f$ converges to $\Phi(0)\int f\,dx$. So addition of polynomials
in \eqref{LP-id} is unnecessary for such $f$. 

Before analysing \eqref{fPhi-id} for general $f\in\cS'$, we first recall that for any $f\in\cS'$, $\Phi\in\cS$ the convolution
$t^n\Phi(t\cdot)*f$ converges for $t\to\infty$ to $cf$ where
$c=\int\Phi\,dx=\hat\Phi(0)$. Moreover, for $c=0$ the number of vanishing moments of $\Phi$
determines the leading terms and rate of convergence to $0$ for $t\to\infty$, since by Taylor expansion of $\hat\Phi$,
\begin{equation}
  t^n\Phi(t\cdot)*f= \hat\Phi(0) f+\frac1t \sum_{|\alpha|=1}\partial^\alpha\hat\Phi(0)D^\alpha f
   +\dots+\frac1{t^M} \sum_{|\alpha|=M}\frac{\partial^\alpha\hat\Phi(0)}{\alpha!}D^\alpha f+R_M.
\end{equation}
In the ``wrong'' limit $t\to0^+$ the situation is radically different, as convergence cannot be expected (cf.\ $d>n$ below). 
Nevertheless there is an optimal asymptotics formula obtained from the Taylor polynomial
$P_m(x)$ of the $C^\infty$-function $t^n\Phi(t\cdot)*f$ itself:
\begin{equation} \label{t0-id}
  t^n\Phi(t\cdot)*f=P_m+R_m=\sum_{|\alpha|\le m}\partial^\alpha(t^n\Phi(t\cdot)*f)(0)\frac{x^\alpha}{\alpha!}+R_m.
\end{equation}
This asymptotics is elementary in nature, and it may well be folklore. But in lack of a reference
we give a proof of the formula and the optimality. 
It is convenient first to observe the following decomposition of Schwartz functions.

\begin{lemma}  \label{SM-lem}
  When $\psi\in\cS$ has a trivial Taylor polynomial of degree $M\ge0$ at $x=0$, then there are other functions
  $\Psi_\gamma\in\cS$ such that $\psi(x)=\sum_{|\gamma|=M+1} x^\gamma\Psi_\gamma(x)$.
\end{lemma}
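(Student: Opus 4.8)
The plan is to prove Lemma~\ref{SM-lem} by induction on the dimension $n$, after first isolating an auxiliary one-variable factorisation. A word of warning up front: the naive approach via Taylor's formula with integral remainder, $\psi(x)=\sum_{|\gamma|=M+1}\frac{M+1}{\gamma!}x^\gamma\int_0^1(1-t)^M(\partial^\gamma\psi)(tx)\,dt$, does produce $C^\infty$ coefficients, but these are in general \emph{not} rapidly decreasing: for instance, taking $\psi(x_1,x_2)=x_1e^{-|x|^2}$ and $\gamma=(1,0)$, the coefficient behaves like $1/x_2$ as $x_2\to\infty$. The point of the argument is therefore to recover the Schwartz property, which cannot be read off the integral remainder directly.

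First I would establish the building block: if $F\in\cS(\bR^n)$ satisfies $\partial_{x_n}^j F(x',0)=0$ for $0\le j\le M$ (writing $x=(x',x_n)$), then $F(x)=x_n^{M+1}s(x)$ with $s\in\cS(\bR^n)$. Taylor's formula in the single variable $x_n$, with $x'$ as a parameter, gives $s(x)=\frac1{M!}\int_0^1(1-t)^M(\partial_{x_n}^{M+1}F)(x',tx_n)\,dt$, which is visibly $C^\infty$ and equals $F(x)/x_n^{M+1}$ for $x_n\ne0$. Rapid decay of $s$ and all of its derivatives is then checked separately on $|x_n|\ge1$ and on $|x_n|<1$: on the former one uses $s=F\cdot x_n^{-(M+1)}$ together with the Leibniz rule, every resulting term being controlled by a Schwartz seminorm of $F$ since $|x_n|\ge1$; on the latter one differentiates under the integral sign and exploits that $|x_n|<1$ forces $|tx_n|<1$, so the Schwartz bounds for $\partial_{x_n}^{M+1}F$ in the remaining variables carry over, using $1+|x|\le 2(1+|x'|)$ there.

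Lemma~\ref{SM-lem} now follows by induction on $n$. For $n=1$ the only monomial of degree $M+1$ is $x_1^{M+1}$, and the claim is precisely the building block. For $n\ge2$, put $\psi_j(x'):=\partial_{x_n}^j\psi(x',0)$, which lies in $\cS(\bR^{n-1})$; since $\partial_{x'}^{\alpha'}\psi_j(0)=\partial^{(\alpha',j)}\psi(0)=0$ whenever $|\alpha'|\le M-j$, the function $\psi_j$ has trivial Taylor polynomial of degree $M-j$ at $0\in\bR^{n-1}$, so by the induction hypothesis $\psi_j=\sum_{|\gamma'|=M-j+1}(x')^{\gamma'}\Psi_{j,\gamma'}$ with $\Psi_{j,\gamma'}\in\cS(\bR^{n-1})$. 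Fix $\omega\in C_0^\infty(\bR)$ with $\omega\equiv1$ near $0$ and set $\Theta(x):=\sum_{j=0}^M\frac{x_n^j}{j!}\,\omega(x_n)\,\psi_j(x')$; expanding each $\psi_j$ exhibits $\Theta$ as $\sum_{|\gamma|=M+1}x^\gamma c_\gamma(x)$ with $c_\gamma\in\cS(\bR^n)$, because every $x_n^j(x')^{\gamma'}$ appearing has total degree $M+1$ and $\frac{\omega(x_n)}{j!}\Psi_{j,\gamma'}(x')\in\cS(\bR^n)$. Since $\omega\equiv1$ near $0$, one has $\partial_{x_n}^i\Theta(x',0)=\psi_i(x')$ for $0\le i\le M$, so $\psi-\Theta\in\cS(\bR^n)$ has vanishing $x_n$-derivatives up to order $M$ on $\{x_n=0\}$; the building block then yields $\psi-\Theta=x_n^{M+1}s$ with $s\in\cS(\bR^n)$. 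As $x_n^{M+1}=x^\gamma$ for $\gamma=(0,\dots,0,M+1)$, adding this to $\Theta$ gives the desired $\psi=\sum_{|\gamma|=M+1}x^\gamma\Psi_\gamma$ with every $\Psi_\gamma\in\cS(\bR^n)$ (set $\Psi_\gamma=0$ for the finitely many indices not produced).

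The main obstacle, as flagged above, is exactly the rapid-decay requirement: flatness of $\psi$ along a coordinate subspace does not make the restriction decay in the complementary directions, which is what causes the $1/|x|$-type tails in the naive integral remainders. Inserting the cutoff $\omega(x_n)$ and peeling off one variable at a time in the induction is what repairs this, and the region split $|x_n|\gtrless 1$ in the building block is where the actual quantitative estimates live; those are routine (Leibniz rule, differentiation under the integral, elementary weight comparisons) but do need to be carried out.
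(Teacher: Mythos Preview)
Your argument is correct, but it follows a genuinely different route from the paper's. The paper splits $\psi$ by a partition of unity into a compactly supported piece $\psi_0$ near the origin and a piece $\psi_1$ with $0\notin\supp\psi_1$. For $\psi_0\in C_0^\infty$ the Taylor integral remainder already gives $\psi_0=\sum_{|\gamma|=M+1}x^\gamma R_\gamma$ with smooth $R_\gamma$, and multiplying by a cutoff $\chi\equiv1$ on $\supp\psi_0$ forces $\chi R_\gamma\in C_0^\infty\subset\cS$. For $\psi_1$ one uses the multinomial identity $|x|^{2(M+1)}=\sum_{|\gamma|=M+1}\tfrac{(M+1)!}{\gamma!}x^{2\gamma}$ to write explicitly $\Psi_\gamma(x)=\tfrac{(M+1)!}{\gamma!}x^\gamma\psi_1(x)|x|^{-2(M+1)}$, which is Schwartz since $\psi_1$ vanishes near~$0$.

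Your induction on the dimension, with the one-variable factorisation $F=x_n^{M+1}s$ as building block, is a valid alternative; the region split $|x_n|\gtrless 1$ is exactly what is needed to recover rapid decay, and the Borel-type correction $\Theta$ with the cutoff $\omega(x_n)$ neatly reduces to the building block. The paper's argument is shorter and avoids induction altogether, essentially because the compact-support trick and the multinomial identity dispatch the two regimes in one line each. Your approach, on the other hand, makes the mechanism behind the Schwartz property more transparent and, as you correctly stress, explains precisely why the naive global Taylor remainder fails---a point the paper's proof sidesteps rather than confronts.
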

\begin{proof}
  For $\psi\in C_0^\infty(\bR^n)$ the claim follows at once from Taylor's formula by
  multiplying both sides by a cut-off function $\chi$ equal to $1$ around $\supp\psi$.
  One can reduce to this case by means of a partition of unity, for when $0\notin\supp\psi$ the
  multinomial formula shows that one can take
  $\Psi_\gamma(x)=(M+1)!x^\gamma\psi(x) |x|^{-2(M+1)}/\gamma! \in\cS$.
\end{proof}

Optimality of \eqref{t0-id} is obtained even among polynomials $Q$ with $t$-dependent degrees:
\begin{proposition}  \label{asymp-prop}
  If $f\in\cS'$ is of $\cS'$-order $d\ge 0$ and $\Phi\in\cS$, 
  the Taylor polynomial $P_m(x)$ of degree $m\ge -1$ satisfies 
  the asymptotics formula \eqref{t0-id}  with terms that are $\cO(t^{n+|\alpha|-d})$ in
  $\cS'$-seminorm for $t\to0^+$,
  whilst the remainder term similarly is
  \begin{equation}
    R_m=\cO(t^{n+m+1-d}).
  \end{equation}
   Any polynomial $Q(x)=\sum_{|\alpha|\le m(t)}c_\alpha(t)x^\alpha$ fulfilling formula \eqref{t0-id} for a remainder $R=o(1)$ in
   $\cS'$-seminorm is given by 
   \begin{equation}
    c_\alpha(t)=\frac1{\alpha!}\partial^\alpha(t^n\Phi(t\cdot)*f)(0)+o(1),   
   \end{equation}
   where  $c_\alpha(t)=o(1)$ for $t\to0^+$  when $|\alpha|>d-n$.
\end{proposition}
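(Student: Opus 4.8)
The plan is to derive everything from a single master estimate for the $C^\infty$-function $g_t(x):=t^n\Phi(t\cdot)*f(x)$ (which is smooth of polynomial growth, as $t^n\Phi(t\cdot)\in\cS$ and $f\in\cS'$) and all of its derivatives. For a multiindex $\gamma$ one has $\partial^\gamma g_t(x)=\bigl[t^{\,n+|\gamma|}(\partial^\gamma\Phi)(t\cdot)\bigr]*f(x)=\langle f,\psi_{t,x,\gamma}\rangle$ with $\psi_{t,x,\gamma}(y)=t^{\,n+|\gamma|}\overline{(\partial^\gamma\Phi)(t(x-y))}$. Invoking the $\cS'$-order $d$ of $f$, cf.\ \eqref{S'-order}, and bounding $p_d(\psi_{t,x,\gamma})$ by means of $1+|y|\le(1+|x|)(1+|x-y|)$ together with $(1+|x-y|)^d\le t^{-d}(1+|t(x-y)|)^d$ for $0<t\le1$, a rescaling of the integration variable yields
\begin{equation*}
  |\partial^\gamma g_t(x)|\le C_\gamma(\Phi,f,d)\,(1+|x|)^d\,t^{\,n+|\gamma|-d}\qquad\text{for }0<t\le1,\ x\in\bR^n.
\end{equation*}
The only point requiring attention here is that, after rescaling, the relevant power of $t$ is $t^{\,n+|\gamma|+|\beta|-d}$ with $|\beta|\le d$, whose supremum over such $\beta$ equals $t^{\,n+|\gamma|-d}$ precisely because $t\le1$.

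Granting the master estimate, the first two assertions are immediate. Taking $x=0$ gives $\bigl|\frac1{\alpha!}\partial^\alpha g_t(0)\bigr|\le C\,t^{\,n+|\alpha|-d}$, so each term $\frac1{\alpha!}\partial^\alpha g_t(0)\,x^\alpha$ of $P_m$, paired with a fixed $\omega\in\cS$, is $\cO(t^{\,n+|\alpha|-d})$ in $\cS'$-seminorm. For the remainder I would use Taylor's formula with integral remainder for $g_t$ at the origin,
\begin{equation*}
  R_m(x)=(m+1)\sum_{|\gamma|=m+1}\frac{x^\gamma}{\gamma!}\int_0^1(1-\theta)^m(\partial^\gamma g_t)(\theta x)\,d\theta ,
\end{equation*}
and insert the master estimate with $\theta x$ in place of $x$ (using $1+\theta|x|\le1+|x|$), obtaining the pointwise bound $|R_m(x)|\le C_m(1+|x|)^{m+1+d}\,t^{\,n+m+1-d}$; hence $|\langle R_m,\omega\rangle|\le C_m\,t^{\,n+m+1-d}\int(1+|x|)^{m+1+d}|\omega(x)|\,dx=\cO(t^{\,n+m+1-d})$ for every $\omega\in\cS$. (For $m=-1$ this is just the master estimate applied to $g_t=R_{-1}$ itself.) Lemma~\ref{SM-lem} offers a convenient alternative bookkeeping device on the test-function side, since one may subtract from $\omega$ a Schwartz function with the same moments up to order $m$ and then express the difference as $\sum_{|\gamma|=m+1}x^\gamma\Omega_\gamma$ with $\Omega_\gamma\in\cS$.

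For the optimality part, fix an integer $M$ with $M\ge d-n$, so that $R_M=\cO(t^{\,n+M+1-d})\to0$ in $\cS'$-seminorm. If $Q(x)=\sum_{|\alpha|\le m(t)}c_\alpha(t)x^\alpha$ satisfies \eqref{t0-id} with a remainder $R=o(1)$, then subtracting the two representations $g_t=P_M+R_M$ and $g_t=Q+R$ shows that the polynomial $Q-P_M=R_M-R$ tends to $0$ in $\cS'$-seminorm. To read off a single coefficient I would test against $\omega_\alpha\in\cS$ with $\int x^\beta\overline{\omega_\alpha(x)}\,dx=\delta_{\alpha\beta}$ for \emph{every} multiindex $\beta$ — for instance $\omega_\alpha=\cF^{-1}\bigl(\kappa_\alpha\,\xi^\alpha\chi(\xi)\bigr)$ with $\chi\in C_0^\infty(\bR^n)$ equal to $1$ near the origin and $\kappa_\alpha$ a normalising constant — since then $\langle Q-P_M,\omega_\alpha\rangle$ equals the $x^\alpha$-coefficient of $Q-P_M$. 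This gives $c_\alpha(t)-\frac1{\alpha!}\partial^\alpha g_t(0)\to0$ for every $|\alpha|\le M$, and since $M$ was arbitrary, for every $\alpha$. Finally, for $|\alpha|>d-n$ the master estimate at $x=0$ gives $\frac1{\alpha!}\partial^\alpha g_t(0)=\cO(t^{\,n+|\alpha|-d})\to0$, whence $c_\alpha(t)=o(1)$ in that range.

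The genuinely delicate step is the very first one: making the rescaling argument clean and uniform, and tracking the exact power of $t$, where the restriction $0<t\le1$ does essential work. The remaining arguments are routine; the only further subtlety is that in the optimality part the degree $m(t)$ of the competing polynomial $Q$ is allowed to vary with $t$, which is why I use test functions annihilating \emph{all} moments but one, so that no uniform degree bound on $Q$ is needed.
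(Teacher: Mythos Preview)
Your proof is correct and proceeds along a genuinely different, more elementary route than the paper's.

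The paper works on the Fourier side: it rewrites $(2\pi)^n\langle R_m,\bar\psi\rangle$ as $\langle\hat f,\hat\Phi(\cdot/t)(\hat\psi-\text{Taylor}_m\hat\psi)\rangle$, invokes Lemma~\ref{SM-lem} to factor the bracketed difference as $\sum_{|\alpha|=m+1}\xi^\alpha\hat\psi_\alpha$, and then feeds the resulting convolutions $\tilde\Phi_t*D^{\alpha+\beta}\psi_\alpha$ into the moment-improved estimate \eqref{eq:p*pM'} of Lemma~\ref{lem:p*p}. By contrast, you stay entirely in physical space: a single master bound $|\partial^\gamma g_t(x)|\le C_\gamma(1+|x|)^d t^{\,n+|\gamma|-d}$, obtained directly from the $\cS'$-order, feeds into Taylor's integral remainder formula and yields a \emph{pointwise} estimate $|R_m(x)|\le C(1+|x|)^{m+1+d}t^{\,n+m+1-d}$, which is stronger than the distributional bound actually required. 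Your argument is self-contained and does not rely on Lemma~\ref{lem:p*p}; the paper's approach, on the other hand, is already set up for Corollary~\ref{RM-cor}, where vanishing moments of the \emph{test function} enter naturally via Lemma~\ref{SM-lem} on $\hat\psi$ itself---though your route also adapts, since $\langle R_m,\psi\rangle=\langle R_M,\psi\rangle$ whenever $\psi$ annihilates polynomials of degree~$M\ge m$.

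For the optimality part both arguments coincide in substance: the paper tests $\widehat{P_M-Q}$ against $\xi^\alpha\chi(\xi)$ with $\chi\in C_0^\infty$ equal to~$1$ near the origin, while you test $P_M-Q$ against $\omega_\alpha=\cF^{-1}(\kappa_\alpha\xi^\alpha\chi)$; these are the same dual functionals. Your observation that $\chi\equiv1$ near $0$ forces $\partial^\gamma\chi(0)=0$ for $\gamma\ne0$, so that $\omega_\alpha$ indeed annihilates every monomial $x^\beta$ with $\beta\ne\alpha$, is exactly what makes the argument robust against the $t$-dependence of $m(t)$.
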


\begin{remark}  \label{conv-rem}
The Taylor polynomial $P_m$ is well defined even for $\Phi\in\cS$, due to the well-known fact
that the convolution 
$f\mapsto\langle f,\partial^\alpha\Phi(t(x-\cdot))\rangle$ is continuous $\cS'\to C^\infty$.
\end{remark}

\begin{proof} 
Here $\langle\cdot,\cdot\rangle$ denotes the bilinear form; we take $0<t<1$ and set $\Phi_t=t^n\Phi(t\cdot)$.

In case $m=-1$, i.e.\ $P_m\equiv 0$, the statement is just that $\Phi_t*f=\cO(t^{n-d})$, but
clearly the seminorm $|\langle f, \overline{\tilde\Phi_t*\psi}\rangle|$ is less than
$t^{n-d}c\|(1+|t\cdot|)^d\tilde\Phi(t\cdot)\|_\infty \sum_{|\beta|\le d}\int (1+|\cdot|)^d|D^\beta \psi|\,dx$.

For general $m\ge0$ we estimate $|\langle R_m,\bar\psi\rangle|$ 
by moving the Taylor expansion to the test function $\psi\in\cS$, as 
the two formulas $I=(2\pi)^{-n}\bar\cF\cF$ and $\cF1=(2\pi)^n\delta_0$ give
  \begin{equation}
    \begin{split}
    (2\pi)^n\langle R_m,\bar\psi\rangle &= 
   \langle\hat\Phi(\cdot/t)\hat f,  \overline{\hat\psi}\rangle
   -\sum_{|\alpha|\le m} \frac1{\alpha!}\langle\delta_0,\partial^\alpha\Phi_t*f\rangle_{\cE'\times C^\infty}
   \langle\cF1,\overline{(\operatorname{i}\partial)^\alpha\hat\psi}\rangle
 \\
   &=\langle\hat f,  \hat\Phi(\cdot/t)\overline{\hat\psi}\rangle
   -\sum_{|\alpha|\le m} \langle \hat f,\hat\Phi(\cdot/t)(\operatorname{i}\xi)^\alpha/\alpha!\rangle
   (-\operatorname{i}\partial)^{\alpha}\overline{\hat\psi(0)}
 \\  
  &=\langle\hat f,  \hat\Phi(\cdot/t)\Big(\hat\psi
   -\sum_{|\alpha|\le m} \partial^\alpha
   \hat\psi(0)\xi^\alpha/\alpha!\Big)^{\overline{~}}\,\rangle.
    \end{split}
 \label{Rpsi-id}
  \end{equation}
Indeed, the second line is seen at once for a Schwartz function $f$, and it extends to all
$f\in\cS'$ by density and the continuity in Remark~\ref{conv-rem}.

In \eqref{Rpsi-id} it follows from Lemma~\ref{SM-lem} that the last difference  
has the form $\sum_{|\alpha|=m+1}\xi^\alpha\hat\psi_\alpha(\xi)$
for certain $\hat\psi_\alpha$ in $\cS$.
As the $\cS'$-order of $f$ is $d$,  we get from \eqref{S'-order} 
\begin{equation} \label{R-est}
  |\langle R_m,\psi\rangle|\le c\sum_{|\alpha|=m+1}\sum_{|\beta|\le d}\|(1+|\cdot|)^d\tilde\Phi_t*D^{\alpha+\beta}\psi_\alpha\|_\infty.
\end{equation}
Because of the $\xi^\alpha$, each $D^{\alpha+\beta}\psi_\alpha$ has vanishing moments at least up to order $m$, so  if
we use the uniform estimates in Lemma~\ref{lem:p*p} for $t=2^{\mu}$, cf. Remark~\ref{p*p-rem},
\begin{equation} \label{R-est'}
  \begin{split}
      |(1+|x|)^dt^n\tilde\Phi(t\cdot)*D^{\alpha+\beta}\psi_\alpha(x)|
   &\le
    t^{n-d}|(1+|tx|)^d\tilde\Phi(t\cdot)*D^{\alpha+\beta}\psi_\alpha(x)|
\\ 
   &\le C''_{d,m}t^{n-d+(m+1)}.
  \end{split}
\end{equation}
By summing over $|\alpha|\le m$ it follows that $R_m=\cO(t^{n+m+1-d})$ in seminorm. 

Concerning $\langle\partial^\alpha\Phi_t*f(0)x^\alpha,\overline\psi\rangle$ for $|\alpha|\le m$ we may follow
this term through the calculation \eqref{Rpsi-id} yielding simply $\partial^\alpha\hat\psi(0)/\alpha!$ times
$\langle f,\overline{t^nD^\alpha(\tilde\Phi(t\cdot))}\rangle=t^{n+|\alpha|}\langle f,\overline{D^\alpha\tilde\Phi(t\cdot)}\rangle$.
Using \eqref{S'-order} directly and handling $t$ as in \eqref{R-est'}, all such terms are seen to be $\cO(t^{n+|\alpha|-d})$.

If $\Phi_t*f=Q+o(1)$ for a $t$-dependent polynomial $Q=\sum_{|\alpha|\le m(t)} c_{\alpha}(t)
x^\alpha$, we fix $\alpha$ and take $M\ge|\alpha|$ such that $n+M\ge d$,
whence the above estimate of $R_M$ gives $P_M-Q=O(t)-o(1)=o(1)$ in seminorm for $t\to 0^+$. 
With $\chi\in C_0^\infty$ equal to $1$ around $\xi=0$, we deduce from
$\langle \partial^\beta\delta_0, \xi^\alpha\chi\rangle=\alpha!\delta_{\alpha,\beta}$ that 
\begin{equation}
  o(1)=\langle \hat P_M-\hat Q, \xi^\alpha\chi\rangle
      =(2\pi)^n(-\operatorname{i})^{|\alpha|}(\partial^\alpha\Phi_t*f(0)- c_{\alpha}).
\end{equation}
Thus the coefficient $c_{\alpha}(t)=\frac1{\alpha!}\partial^\alpha \Phi_t*f(0)+o(1)$, i.e.\ 
it must behave asymptotically for $t\to 0^+$ as that of an indiviual term of the Taylor polynomial
$P_M$ at $0$ of $\Phi_t* f$, hence by the previous part of the proof be $o(1)$ if $n+|\alpha|-d>0$. 
\end{proof}

The asymptotic uniqueness of $Q(x)$ in Proposition~\ref{asymp-prop} yields 
\begin{equation}
  \lim_{t\to 0}\partial^\alpha t^n\Phi(t\cdot)*f(0)\ne 0\implies \lim_{t\to0}c_\alpha(t)\ne 0,
\end{equation} 
so even by accepting error terms as vague as $R=o(1)$ there is for
general $f$ no hope to have an \emph{approximating} polynomial $Q$ of degree $m<d-n$.

It should be observed that the estimate $R_m=\cO(t^{n+m+1-d})$ shows that in \eqref{t0-id} the approximation by $P_m(x)$ 
gets increasingly better for $t\to0^+$ if the degree $m$ is fixed so large that $n+m\ge d$. 
For $m=-1$ the convolution itself is $\cO(t^{n-d})$, and goes to $0$ in $\cS'$ if $d<n$, 
in particular it is $\cO(t^n)$ for $d=0$---which for $f\in L_1$ is immediate (cf.\ \eqref{fPhi-id}).
For $d<n$ even all terms in $P_m(x)$ go to $0$ in $\cS'$ for $t\to 0^+$.
 
Furthermore, the convergence rate improves with many vanishing moments:

\begin{corollary} \label{RM-cor}
  When the seminorm $|\langle\cdot,\psi\rangle|$ in Proposition~\ref{asymp-prop} is given by a
  $\psi\in\cS$ having vanishing moments of order $M\ge0$, then
  \begin{equation}
    \langle R_m,\psi\rangle=\cO(t^{n+\max(M,m)+1-d})
  \end{equation}
  and each term in $P_m$ has $\langle\partial^\alpha t^n\Phi(t\cdot)*f,\psi\rangle=0$
  for $|\alpha|\le M$ and else is $\cO(t^{n+|\alpha|-d})$.
\end{corollary}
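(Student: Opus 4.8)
The plan is to re-examine the proof of Proposition~\ref{asymp-prop} and notice that the vanishing-moment hypothesis on $\psi$ strengthens precisely one step: the Taylor expansion of $\hat\psi$ at the origin in \eqref{Rpsi-id}. Applying that identity (with $\psi$ replaced by $\bar\psi$ throughout, which has $M$ vanishing moments as well), the quantity $\langle R_m,\psi\rangle$ is governed by the Taylor remainder
\begin{equation*}
  g(\xi):=\hat\psi(\xi)-\sum_{|\alpha|\le m}\partial^\alpha\hat\psi(0)\,\xi^\alpha/\alpha! .
\end{equation*}
Since $\partial^\alpha\hat\psi(0)=(-\operatorname{i})^{|\alpha|}\int x^\alpha\psi(x)\,dx=0$ for $|\alpha|\le M$, the subtracted sum only runs over $M<|\alpha|\le m$, and one checks at once that $\partial^\beta g(0)=0$ for every $|\beta|\le k$ with $k:=\max(M,m)$: indeed $\partial^\beta g(0)=\partial^\beta\hat\psi(0)=0$ when $|\beta|\le M$, while $\partial^\beta g(0)=\partial^\beta\hat\psi(0)-\partial^\beta\hat\psi(0)=0$ when $M<|\beta|\le m$. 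So $g$ has a trivial Taylor polynomial of degree $k$ at $\xi=0$, even though the polynomial we subtracted has only degree $m$.

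I would then feed this into the same chain of estimates as in the proof of Proposition~\ref{asymp-prop}, with $k$ replacing $m$. By Lemma~\ref{SM-lem} we may write $g(\xi)=\sum_{|\gamma|=k+1}\xi^\gamma\hat\psi_\gamma(\xi)$ for suitable $\hat\psi_\gamma\in\cS$, whence, exactly as in \eqref{R-est},
\begin{equation*}
  |\langle R_m,\psi\rangle|\le c\sum_{|\gamma|=k+1}\sum_{|\beta|\le d}\big\|(1+|\cdot|)^d\,\tilde\Phi_t*D^{\gamma+\beta}\psi_\gamma\big\|_\infty .
\end{equation*}
Each $D^{\gamma+\beta}\psi_\gamma$ has vanishing moments of order at least $|\gamma|-1=k$, since its Fourier transform $\xi^{\gamma+\beta}\hat\psi_\gamma(\xi)$ vanishes to order $\ge k+1$ at $\xi=0$; hence Lemma~\ref{lem:p*p} (through \eqref{eq:p*pM'}, with $\varphi=D^{\gamma+\beta}\psi_\gamma$ carrying the moments and dilation by $t=2^\mu<1$ allowed by Remark~\ref{p*p-rem}) together with the rescaling $(1+|x|)^d\le t^{-d}(1+|tx|)^d$ used as in \eqref{R-est'} gives $\|(1+|\cdot|)^d\,\tilde\Phi_t*D^{\gamma+\beta}\psi_\gamma\|_\infty=\cO(t^{\,n-d+k+1})$. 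Summing the finitely many terms yields $\langle R_m,\psi\rangle=\cO(t^{\,n+\max(M,m)+1-d})$, which is the first assertion.

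Finally, for the terms of $P_m$: the $\alpha$-th one, paired against $\psi$, equals $\tfrac1{\alpha!}\partial^\alpha(t^n\Phi(t\cdot)*f)(0)\int x^\alpha\psi(x)\,dx$, and the last integral vanishes for $|\alpha|\le M$; for $M<|\alpha|\le m$ this term is a fixed multiple of $\partial^\alpha(t^n\Phi(t\cdot)*f)(0)=t^{n+|\alpha|}\langle f,D^\alpha\tilde\Phi(t\cdot)\rangle$, which is $\cO(t^{\,n+|\alpha|-d})$ by the $\cS'$-order bound \eqref{S'-order} with the factor $t$ handled as in \eqref{R-est'}. This gives the second assertion. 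I do not anticipate a real obstacle: the entire content is the elementary observation about $g$ in the first paragraph, and the only technical point to keep an eye on is that the $\hat\psi_\gamma$ produced by Lemma~\ref{SM-lem} are genuinely Schwartz functions, so that Lemma~\ref{lem:p*p} applies without change.
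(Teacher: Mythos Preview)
Your proof is correct and follows essentially the same approach as the paper: you revisit \eqref{Rpsi-id}, observe that the difference $g$ has a trivial Taylor polynomial of higher degree due to the vanishing moments of $\psi$, apply Lemma~\ref{SM-lem} with that higher degree, and rerun \eqref{R-est}--\eqref{R-est'}. The only cosmetic difference is that the paper handles just the case $m<M$ explicitly (since for $m\ge M$ the estimate $\cO(t^{n+m+1-d})$ is already contained in Proposition~\ref{asymp-prop}), whereas you treat both cases uniformly via $k=\max(M,m)$.
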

\begin{proof}
  If $m< M$ the last difference in \eqref{Rpsi-id} just equals $\hat\psi$, which by
  Lemma~\ref{SM-lem} is a sum of terms $\xi^\gamma\hat\psi_\gamma(\xi)$ for $|\gamma|=M+1$.
  Hence the $\alpha$ in \eqref{R-est}--\eqref{R-est'} should be of length $M+1$,
  which as before gives $\cO(t^{n+M+1-d})$.
  The treatment of the individual terms is unchanged, but they clearly vanish for $|\alpha|\le M$ as $\partial^\alpha\hat\psi(0)=0$. 
\end{proof}

Now, if we return to \eqref{fPhi-id}, and take advantage of the fact that $\Phi_{-N}$ is a dilation
as observed there, and if we set 
\begin{equation} \label{PmN-id}
  P_{m,N}(x)= \sum_{|\alpha|\le m} c_{\alpha,N}x^\alpha
  \qquad\text{for $c_{\alpha,N}=\frac{1}{\alpha!}\partial^\alpha 2^{-nN}\Phi(2^{-N}\cdot)*f(0)$},   
\end{equation}
then Proposition~\ref{asymp-prop} gives at once that for each $N\in \bN$, 
\begin{equation} \label{fO-id}
    f=\sum_{\nu=-N}^\infty \phi_\nu*f +P_{m,N}+\cO(2^{-N(n+m+1-d)}).
\end{equation}
For $N\to\infty$ we get for $m=d$ the well-known result of Peetre in \eqref{LP-id}, although the resulting error term
$\cO(2^{-N(n+1)})$ here yields an exponentially fast convergence of the sum $\sum_{\nu=-N}^\infty
\phi_\nu*f$ for $N\to\infty$. This seems to be a novelty in the context.

For $m\ne d$ we furthermore obtain from formula \eqref{fO-id} a general, but sharp version of the homogeneous Littlewood--Paley decomposition:

\begin{proposition} \label{LP-prop}
  If $\phi$ is admissible and fulfils \eqref{eq:xi0} and $f\in\cS'$ is of temperate order $d$, the
  polynomials $P_{m,N}$ in \eqref{PmN-id} satisfy \eqref{fO-id}.
  Moreover any polynomial $P_{m,N}(x)=\sum_{|\alpha|\le m} c_{\alpha,N} x^\alpha$ will fulfil
  \eqref{fO-id} with an $o(1)$-error if and only if its coefficients for $N\to\infty$ satisfy
 \begin{equation}  \label{eq:LWPPo}
   c_{\alpha,N}=\frac1{\alpha!}\partial^\alpha(2^{-nN}\Phi(2^{-N}\cdot)*f)(0) +o(1),
 \end{equation}
where the leading term also is given by $c_{\alpha,N}=\langle \hat
   f,\frac{(\operatorname{i}\xi)^\alpha}{\alpha!}(1-\sum_{v=-N}^\infty\hat\phi_\nu)\rangle /(2\pi)^n$.
\end{proposition}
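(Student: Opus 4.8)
The plan is to reduce the statement to Proposition~\ref{asymp-prop} by means of the identity \eqref{fPhi-id} and the substitution $t=2^{-N}$. First I would recall that \eqref{fPhi-id} gives $f-\sum_{\nu=-N}^\infty\phi_\nu*f=\Phi_{-N}*f=2^{-nN}\Phi(2^{-N}\cdot)*f$, and that by \eqref{PmN-id} the polynomial $P_{m,N}$ is precisely the degree-$m$ Taylor polynomial at $x=0$ of this $C^\infty$-function, its coefficients being well defined by Remark~\ref{conv-rem}. Hence, writing $t=2^{-N}$, formula \eqref{t0-id} of Proposition~\ref{asymp-prop} reads $2^{-nN}\Phi(2^{-N}\cdot)*f=P_{m,N}+R_m$ with $R_m=\cO(t^{n+m+1-d})=\cO(2^{-N(n+m+1-d)})$ in $\cS'$-seminorm, and substituting back into \eqref{fPhi-id} gives \eqref{fO-id} at once; likewise the term bound $\cO(t^{n+|\alpha|-d})$ transfers to the monomials of $P_{m,N}$.

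For the characterisation I would invoke the uniqueness clause of Proposition~\ref{asymp-prop}. Suppose a polynomial $\widetilde P_{m,N}=\sum_{|\alpha|\le m}\widetilde c_{\alpha,N}x^\alpha$ fulfils \eqref{fO-id} with an error that is $o(1)$ in $\cS'$-seminorm; by \eqref{fPhi-id} this is the same as $2^{-nN}\Phi(2^{-N}\cdot)*f=\widetilde P_{m,N}+o(1)$ along $t=2^{-N}\to0^+$. The uniqueness part of Proposition~\ref{asymp-prop} (here in its fixed-degree special case) applies verbatim along this discrete sequence, since its proof only pairs the relevant distributions against the fixed test functions $\xi^\alpha\chi$; it yields $\widetilde c_{\alpha,N}=\frac1{\alpha!}\partial^\alpha(2^{-nN}\Phi(2^{-N}\cdot)*f)(0)+o(1)$, which is \eqref{eq:LWPPo}. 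Conversely, if the coefficients satisfy \eqref{eq:LWPPo}, then $\widetilde P_{m,N}$ differs from the genuine $P_{m,N}$ of \eqref{PmN-id} by a polynomial of degree $\le m$ whose coefficients are $o(1)$; such a polynomial tends to $0$ in $\cS'$, since against any fixed $\psi\in\cS$ its pairing is a finite linear combination of those coefficients, so adding it to \eqref{fO-id} leaves the error $\cO(2^{-N(n+m+1-d)})+o(1)$, i.e.\ of the same type. This gives the ``if and only if''.

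It remains to identify the leading term. Starting from \eqref{PmN-id} and using $2^{-nN}\Phi(2^{-N}\cdot)=\Phi_{-N}$ with $\widehat{\Phi_{-N}}=1-\sum_{\nu=-N}^\infty\hat\phi_\nu$ (cf.\ \eqref{fPhi-id}), I would write $\partial^\alpha(\Phi_{-N}*f)(0)=\langle f,(\partial^\alpha\Phi_{-N})(-\cdot)\rangle$ and apply Parseval; moving $\partial^\alpha$ onto the Fourier side produces the factor $(\operatorname{i}\xi)^\alpha$, so that $c_{\alpha,N}=(2\pi)^{-n}\langle\hat f,\frac{(\operatorname{i}\xi)^\alpha}{\alpha!}(1-\sum_{\nu=-N}^\infty\hat\phi_\nu)\rangle$. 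This is precisely the computation already carried out in \eqref{Rpsi-id}, and the right-hand side is meaningful and continuous in $f$ by Remark~\ref{conv-rem}.

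I expect the main obstacle to be not a single computation but the care needed around the uniqueness statement: checking that the clause of Proposition~\ref{asymp-prop} about $t$-dependent polynomials $Q$ does apply to the discrete family $t=2^{-N}$, and keeping straight when the error in \eqref{fO-id} is genuinely of order $\cO(2^{-N(n+m+1-d)})$ and when it is only required to be $o(1)$. Everything else is a transcription of Proposition~\ref{asymp-prop} and of \eqref{Rpsi-id} through the change of variables $t=2^{-N}$.
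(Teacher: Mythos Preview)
Your proposal is correct and matches the paper's own approach: the proposition is stated there as an immediate consequence of Proposition~\ref{asymp-prop} via the identity \eqref{fPhi-id} and the substitution $t=2^{-N}$, with no separate formal proof. You have in fact been more careful than the paper by explicitly verifying both directions of the ``if and only if'', by addressing the passage from continuous $t\to0^+$ to the discrete sequence $t=2^{-N}$, and by spelling out the Parseval computation for the Fourier-side formula that the paper leaves implicit in \eqref{Rpsi-id}.
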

If desired, one may appeal to the continuity of $D^\alpha$ to obtain, for $m+n\ge d$,
\begin{equation}  \label{Df-id}
   D^\alpha f=\lim_{N\to\infty}(\sum_{\nu=-N}^\infty \phi_\nu*D^\alpha f-D^\alpha P_{m,N}). 
\end{equation}

\begin{remark}
  Kyriazis~\cite{MR1981430} made a study of the Littlewood--Paley decomposition \eqref{LP-id} in
  the space $(\cS_M)'$, i.e.\ the dual of $\cS_M=\{\,\psi\in\cS\mid\int
  x^\alpha\psi \,dx=0 \text{ for } |\alpha|\le M\,\}$. Inspired by this, let us note that by testing against such special
  $\psi$, the remainder term in \eqref{fO-id} improves in view of Corollary~\ref{RM-cor} to $\cO(2^{-N(n+\max(M,m)+1-d)})$.
\end{remark}
\begin{remark}
  Kyriazis~\cite{MR1981430} gave an example of distributions $f_s$, $s>0$, in $\cS'(\bR)$ for which the series \eqref{fO-id} 
  only converges for $m\ge0$; i.e.\ addition of at least constants is necessary to have convergence to $f_s$ in $\cS'(\bR)$.
\end{remark}
\begin{remark} \label{Peetre-rem}
Peetre~\cite[p.~54]{MR0461123} treated convergence in $\cS'$ of the Littlewood--Paley decomposition
\eqref{LP-id}, 
which he essentially stated with unspecified polynomials $P$ and $P_N$ in the form
\begin{equation} \label{Peetre-id}
  f-P=\lim_{N\to\infty}(\sum_{\nu=-N}^\infty \phi_\nu*f-P_N) .
\end{equation}
According to Proposition~\ref{LP-prop} subtraction of $P$ can be avoided (of
course $P$ can also be added on both sides, replacing $P_N$ by $P_N-P$, to have only $f$ on the left).
Peetre sketched a proof based on the obvious convergence of the differentiated series
$\sum_\nu \phi_\nu*D^\alpha f$ for $|\alpha|=d+1$, where polynomials of degree $d$ form the common null space of
these $D^\alpha$, leading to the $P_N$ and $P$---but no details were given on convergence in \eqref{Peetre-id}. 
Frazier and Jawerth~\cite{MR808825} claimed restrictions on the degrees of $P_N$ and $P$.  
Later Kyriazis~\cite{MR1981430} gave a full proof of \eqref{Peetre-id}, and so did Bownik and Ho~\cite{MR2186983}.
Our Proposition~\ref{LP-prop} presents an alternative approach with Taylor polynomials
$P_{m,N}$ of an \emph{arbitrary} degree $m\ge-1$, which by the asymptotic uniqueness in Proposition~\ref{asymp-prop}
yields that the above $P_N$ and $P$ must be \emph{interrelated} by $P_N-P=P_{m,N}+ o(1)$. 
It also provides a comprehensive error analysis, entailing that the band-limited series
$\sum_{\nu\ge-N}\phi_\nu*f$ plus $P_{m,N}$ converges to $f$ itself
in the topology of $\cS'$ whenever $m\ge d-n$.
\end{remark}

\section*{Acknowledgement}
We wish to thank the two anonymous reviewers for their careful reading of the manuscript,
which impelled us to insert Remark~\ref{emb-rem}, Remark~\ref{SBE-rem} and Corollary~\ref{Tpsi-cor}
and a number of minor changes to improve the presentation in the final version.


\end{document}